\newtheorem{theorem}{Theorem}[section]
\newtheorem{proposition}[theorem]{Proposition}
\newtheorem{lemma}[theorem]{Lemma}
\numberwithin{equation}{section}
\theoremstyle{definition}
\newtheorem{problem}[theorem]{Problem}
\theoremstyle{remark}
\newtheorem{remark}[theorem]{Remark}
\newtheorem{remarks}[theorem]{Remarks}
\newtheorem*{remark*}{Remark}
\newcommand{\1}[1]{{\mathbbm{1}\mkern -1.5mu}{\{#1\}}}
\newcommand{\2}[1]{{\mathbbm{1}}_{#1}}
\newcommand{\R}{{\mathbb R}}
\newcommand{\Cl}{\mathfrak{C}}
\newcommand{\Z}{{\mathbb Z}}
\newcommand{\N}{{\mathbb N}}
\newcommand{\ZP}{{\mathbb Z}_+}
\newcommand{\RP}{{\mathbb R}_+}
\newcommand{\eps}{\varepsilon}
\DeclareMathOperator{\Exp}{\mathbb{E}}
\let\Pr\relax
\DeclareMathOperator{\Pr}{\mathbb{P}}
\DeclareMathOperator{\bE}{{\mathbf{E}}}
\DeclareMathOperator{\bP}{{\mathbf{P}}}
\newcommand{\re}{{\mathrm{e}}}
\newcommand{\rc}{{\mathrm{c}}}
\newcommand{\ud}{{\mathrm d}}
\newcommand{\cF}{{\mathcal F}}
\newcommand{\cG}{{\mathcal G}}
\newcommand{\as}{\ \text{a.s.}}
\newcommand{\Bin}{\mathop{\text{Bin}}}
\newcommand{\bigmid}{\; \bigl| \;}
\newcommand{\uq}{{\underline{q}_\alpha}}
\newcommand{\eqd}{\overset{d}{=}}
\newcommand{\teta}{\widetilde{\eta}}
\def\namedlabel#1#2{\begingroup  
    (#2)%
    \def\@currentlabel{#2}%
    \phantomsection\label{#1}\endgroup
}
\newlist{myenumi}{enumerate}{10}
\setlist[myenumi]{leftmargin=0pt, labelindent=\parindent, listparindent=\parindent, labelwidth=0pt, itemindent=!, itemsep=1pt, parsep=4pt}
\newlist{thmenumi}{enumerate}{10}
\setlist[thmenumi]{leftmargin=0pt, labelindent=\parindent, listparindent=\parindent, labelwidth=0pt, itemindent=!}
\title{Long-range one-dimensional\\ 
internal diffusion-limited aggregation}
\author{Conrado da~Costa\footnote{{Department of Mathematical Sciences, Durham
  University,  Durham DH1 3LE, UK.}}~\footnote{\href{mailto:conrado.da-costa@durham.ac.uk}{conrado.da-costa@durham.ac.uk}} \and Debleena Thacker\footnotemark[1]~\footnote{\href{mailto:debleena.thacker@durham.ac.uk}{debleena.thacker@durham.ac.uk}} \and Andrew
  Wade\footnotemark[1]~\footnote{\href{mailto:andrew.wade@durham.ac.uk}{andrew.wade@durham.ac.uk}}}
\date{\today}
\begin{document}
\maketitle

\begin{abstract}
 We study internal diffusion limited aggregation on $\Z$, where a  cluster
is grown incrementally by adding, for each random walk dispatched from
the origin, the first site it reaches outside the cluster.
We assume that the increment distribution $X$ of the driving random walks has $\Exp X =0$, but need  neither be simple nor symmetric, and can have $\Exp (X^2) = \infty$, for example.
For the case where $\Exp (X^2) < \infty$, we prove that
after $m$ of the random walks have been dispatched, all but $o(m)$ sites in the cluster form  an approximately symmetric contiguous block around the origin. 
This strengthens a result of Blach\`ere, for centred random walks whose increments have
{finite $3$rd moments}, to the optimal moments condition. 
 On the other hand, if~$X$ is
in the domain of attraction of a symmetric $\alpha$-stable law, $1 < \alpha <2$, we prove that
the cluster contains a contiguous block of $\delta m +o(m)$ sites, where $0 < \delta < 1$, but, unlike the finite-variance case, one may not take $\delta=1$.
\end{abstract}

\medskip

\noindent
{\em Key words:} Aggregation; random walk; growth process; shape theorem; renewal theory; overshoots.

\medskip

\noindent
{\em AMS Subject Classification:} 60K35 (Primary) 60F15, 60G50, 60K05, 82C24 (Secondary).

\tableofcontents 

\section{Introduction and main results}
\label{sec:introduction-and-results}

\subsection{Diffusion-generated growth}
\label{sec:introduction}

Internal diffusion-limited aggregation (IDLA)
is a discrete-time stochastic growth model 
on an infinite graph
driven by a sequence of random walks,
dispatched one after another from a common site (the germ of the aggregate), 
with each walker 
expanding the aggregate by the addition of the first
site the walker visits outside the current aggregate.
The model was
introduced
independently by 
Meakin \& Deutch~\cite{md} and 
by Diaconis \& Fulton~\cite{df} 
in the case where the driving random walk is simple symmetric random walk (SSRW) on $\Z^d$.

 Lawler, Bramson \& Griffeath~\cite{lbg} 
 proved that the long-time shape of the aggregate generated by SSRW
 on $\Z^d$
 converges to a ball.
More recently, after important early work of Lawler~\cite{lawler},
a rich picture concerning sub-diffusive fluctuations around the limit shape for $d \geq 2$ has been revealed~\cite{ag1,ag2,jls1,jls2,jls3}
(it was already observed in~\cite[pp.~107--8]{df} and~\cite[p.~2118]{lbg} that
in $d=1$ the fluctuations of the process can be described via Friedman's urn).

The terminology ``internal'' refers to the fact that the successive walkers are dispatched from inside the current cluster, in contrast to classical DLA~\cite{ws}, in which they are dispatched ``from infinity''. We refer the readers to \cite{Sava-Huss2021} for a survey and comparison of the two different DLA models. One major
difference between these two models is that the IDLA models exhibit regularity in their asymptotic behaviour, for example in $\Z^d, \, d \ge 2$ the limit shape is a Euclidean ball; whereas the shapes generated by classical DLA models appear to be highly irregular and display fractal structure, although mathematical results are scarce (see \cite{Kesten1987,Sava-Huss2021,ws}). \emph{Long-range} classical DLA models have been studied in~\cite{bpp,amir1,amir2,amir3}, driven by random walks that are allowed jumps beyond nearest-neighbours.

The majority of work on IDLA has been concerned with SSRW as the driving random walk;
a variation in which  only a random subset of sites are trapping
was studied in~\cite{ben-arous},
drifted simple random walk was studied by~\cite{lucas}, and excited random walk by~\cite{rs},
while the case of walkers starting uniformly in the present cluster, rather than from a fixed origin,
was studied in~\cite{bdckl}. Other recent work deals with IDLA generated by branching random walk~\cite{ast}. 

Moving away from $\Z^d$, IDLA has also been studied on
cylinder graphs~\cite{silvestri}, on fractals~\cite{chsht,fhksh}, and on regular trees~\cite{bpp}, the latter setting being related to digital search trees~\cite{DrMota_2020}.
A continuous-time version of IDLA was introduced in~\cite{Be_Ka_Pr_2014}, driven by an oriented simple random walk on the upper half plane of $\Z^2$, and admitting a coupling with some first-passage percolation models. Further connections of IDLA driven by simple random walks on $\Z^2$ and random forests have been investigated in~\cite{Che_Cou_Rou_2023}.  

On $\Z^d$, the only work of which we are aware that considers  long-range
random walks is the  paper of Blach\`ere~\cite{blachere}, in which the shape theorem and sub-diffusive fluctuation results of~\cite{lbg,lawler} are extended to walks whose increments
have zero mean and {finite $3$rd moments}. Part of the contribution of the present paper (as we discuss in more detail shortly) is to extend this result by reducing the required moments for the shape theorem to finite variance only, which is best possible. We also consider the case of infinite variance, where we know of no previous quantitative results.

In the present paper, 
we investigate the degree to which the regularity exhibited by classical (SSRW-driven) IDLA is preserved
for long-range IDLA on $\Z$.
We will consider 
random walks whose increments have mean zero, and either finite variance
(where we provide essentially-optimal extensions of results of~\cite{blachere} by removing higher-order moments assumptions), or are in the domain of normal attraction of a symmetric $\alpha$-stable distribution with characteristic function ${\re^{-\beta |t|^\alpha}}$
for $1 < \alpha <2$ (for which we know of no previous results in the literature). 
Our main results are in two parts, and are presented in detail
in \S\ref{sec:results} below, after we have introduced necessary notation in \S\ref{sec:model}). 
First, we show that the long time behaviour observed for SSRW-driven IDLA extends  to the mean-zero, finite-variance case (Theorem~\ref{thm:idla-light-tail}). {This improves on the best available previous result, due to Blach\`ere~\cite{blachere}, which, as stated, demands finiteness of $4$th moments\footnote{{Blach\`ere's result also provides quantitative bounds on fluctuations, which we do not pursue here.}}, although an inspection of the arguments of~\cite{blachere} suggests that $3$rd  moments is enough to make it work (see Remark~\ref{rems:SSRW}\ref{rems:SSRW-ii} below)}.
Second, we show that the regular behaviour
begins to degrade in the infinite-variance case (Theorem~\ref{thm:idla-heavy-tail}), demonstrating a phase transition in the model, and quantifying that the finite-variance result is optimal. {We give some intuition (in terms of ideas from renewal theory) for why finite vs.~infinite variance is the critical point for this phase transition in \S\ref{sec:discussion} below.}

The complementary work \cite{Balasz_etal2024} introduced a ``one-sided'' IDLA model on $\Z$ where the initial semi-infinite 
aggregate consists of sites $\{0,-1,-2,\ldots\}$ (say) and each walker comes from $-\infty$ (as opposed to our model where the walkers start from the origin), and steps on the renewal points of a bi-infinite (potentially long-range) renewal chain  and settles at the first vacant site it encounters. The main object of interest in \cite{Balasz_etal2024} is the 
evolution of the ``shock profile'' of the moving front of the cluster (holes to left, islands to the right), including
 a connection to  blocking measures arising from the asymmetric simple exclusion process.

\subsection{Model and notation}
\label{sec:model}

Consider a probability space $(\Omega, \cF, \Pr)$
which supports an array of i.i.d.~$\Z$-valued random variables~$X$ and~$X^{(m)}_i$, $m \in \N$, $i \in \N$. (Throughout the paper, we write $\N := \{1,2,3,\ldots\}$
and $\ZP := \{ 0 \} \cup \N$.) 
Consider  $S^{(1)}, S^{(2)}, \ldots$ a sequence of independent random walks,
started from the origin, defined via $S^{(m)}_0 :=0$ and
\[ S^{(m)}_n := \sum_{i=1}^n X^{(m)}_i, \text{ for } n \in \N. \]
For convenience, we also consider a random walk whose increment distribution
is the same as that of $X$ above, but with an arbitrary starting point;
to this end we introduce (for each $x \in \Z$) a probability space $(\Omega', \cF', \bP_x)$
which supports 
a random walk $S = ( S_0, S_1, S_2, \ldots)$
such that $\bP_x (S_0 = x) = 1$, and $S_{n+1} - S_n$ are i.i.d.~with
$\bP_x (S_{n+1} - S_n = y) = \Pr ( X = y)$ for all $x, y \in \Z$.
We write $\Exp, \bE_x$ for expectation corresponding to~$\Pr, \bP_x$, respectively.

Throughout this paper
we assume the following \emph{irreducibility} hypothesis, which ensures that the random walk
can visit all of $\Z$:
\begin{description}
\item[\namedlabel{ass:irreducible}{I}] 
Suppose that 
for every  $x,y \in \Z$,  there is $n \in \N$ such that
\begin{equation}
\label{hyp:irreducible}
    \bP_x (S_n = y)>0.
\end{equation}
\end{description}

\begin{remark}
    \label{rem:irreducible}
An elementary sufficient condition for~\eqref{ass:irreducible},
which holds for simple symmetric random walk and many other examples, is that
(i) $X$ is not constant and has $\Exp X = 0$ (\emph{zero mean drift}), and (ii) 
for no $h>1$ does it hold that 
$\Pr (X \in h \Z ) =1$; this last condition is no real loss of generality, as if $\Pr (X \in h \Z ) =1$ for $h >1$,
then one can work instead with increment $X/h$. To see the sufficiency, note that if (i) holds, then the support of~$X$ contains at least one positive value $x_+ \in \N$ and at least one negative value $-x_-$ for $x_- \in \N$, while if (ii) holds, $x_+, x_-$ can be chosen to have $\gcd(x_+,x_-) = 1$.
\end{remark}

Define $\Cl_0 := \{ 0 \}$ (the germ) and then, recursively,
 for $m \in \N$ let 
\begin{equation}\label{eq:cluster expansion}
 \tau_m := \inf \bigl\{ n \in \ZP : S^{(m)}_n \notin \Cl_{m-1} \bigr\} ,
  ~\text{and}~ \Cl_m := \Cl_{m-1} \cup \{ S^{(m)}_{\tau_m} \} .
\end{equation}
Hypothesis~\eqref{ass:irreducible} implies that $\limsup_{n \to \infty} | S^{(m)}_n | = \infty$, $\Pr$-a.s., and hence $\tau_m < \infty$, $\Pr$-a.s., for every $m \in \N$. Consequently, the number
of sites in $\Cl_m$ is $\# \Cl_m = m+1$.

We make two comments on the notation. First, we use~$n$ for the internal clock for each walker, and keep $m$ for indexing the walkers. Second, while $S^{(m)}$ denotes the $m$th random walker to be released in our IDLA
process, we use $S$ and $\bP_x$~to make statements generically about the random walk with increments distributed as~$X$.

We call $\Cl_m$ the \emph{IDLA cluster} generated by the first $m$ walkers.
By construction, $\Cl_{m} \subset \Cl_{m+1}$ are (strictly) increasing;
denote the limit by $\Cl_\infty := \cup_{m \in \ZP} \Cl_m$, the collection of sites eventually contained in the cluster. The limit set $\Cl_\infty$ is an infinite subset of $\Z$; the following elementary result,
whose proof is in Appendix~\ref{sec:appendix},
says that it is, in fact, the whole of $\Z$, with no gaps.

\begin{proposition}
\label{prop:fill-finite-set}
Suppose that~\eqref{ass:irreducible} holds.
Then $\Pr ( \Cl_\infty = \Z ) =1$.
\end{proposition}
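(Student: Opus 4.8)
The plan is to show that every fixed site $z \in \Z$ is almost surely eventually absorbed into the cluster, i.e.\ $\Pr ( z \in \Cl_\infty ) = 1$ for each $z$; since $\Z$ is countable, a union bound over $z$ then gives $\Pr ( \Cl_\infty = \Z ) = 1$. By the monotonicity $\Cl_m \subset \Cl_{m+1}$ and the fact that $\# \Cl_m = m+1 \to \infty$, the set $\Cl_\infty$ is an infinite subset of $\Z$; the only thing that can go wrong is that $\Cl_\infty$ misses some site, so the whole content is ruling out ``gaps''.

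The key observation is the following structural fact about one-dimensional IDLA: at every stage, the cluster $\Cl_m$ is a \emph{contiguous} block of integers, i.e.\ an interval $\Cl_m = \{ \ell_m, \ell_m + 1, \ldots, r_m \}$ with $\ell_m \le 0 \le r_m$. This is immediate by induction: $\Cl_0 = \{0\}$ is an interval, and when walker $m$ exits an interval $\Cl_{m-1}$ for the first time, by the irreducibility/skip-free-in-spirit structure — wait, the walk is \emph{not} skip-free, so the first exit point $S^{(m)}_{\tau_m}$ need not be $\ell_{m-1}-1$ or $r_{m-1}+1$. Hence the cluster is \emph{not} in general an interval. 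So instead I would argue directly: fix $z$, and suppose for contradiction that $\Pr( z \notin \Cl_\infty ) > 0$. On the event $\{ z \notin \Cl_\infty \}$, the site $z$ is never added, so every walker $S^{(m)}$ that would otherwise settle at $z$ must settle elsewhere; but a walker settles at the first site it visits outside $\Cl_{m-1}$, so if $z \notin \Cl_{m-1}$ then walker $m$ settles at $z$ as soon as it visits $z$ before visiting any other unoccupied site. The cleanest route: let $Z := \inf \{ n : S_n = z \}$ be the hitting time of $z$; by~\eqref{ass:irreducible} and recurrence of mean-zero walks (or directly, since $\limsup |S_n| = \infty$ a.s.\ together with irreducibility forces every site to be hit), $\bP_0( Z < \infty ) = 1$, in fact the walk from $0$ hits $z$ a.s. More is true and is what I actually need: from any starting configuration, there is a uniform-in-$m$ lower bound on the probability that the next walker settles at $z$, given $z \notin \Cl_{m-1}$.

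Concretely, here is the step I would carry out carefully, which I expect to be the main obstacle. Fix $z$ and consider the (random) finite set $A := \Cl_\infty \cap ( \{z\} \cup \text{neighbourhood of } z)$ — rather, let me phrase it via a Borel–Cantelli argument. Condition on $\cF_{m-1}$, the configuration after $m-1$ walkers. If $z \in \Cl_{m-1}$ we are done for that $z$; otherwise, I claim $\Pr( S^{(m)}_{\tau_m} = z \mid \cF_{m-1} ) \ge c$ for some constant $c = c(z) > 0$ not depending on $m$ or on $\Cl_{m-1}$ (as long as $z \notin \Cl_{m-1}$). Granting the claim: on $\{ z \notin \Cl_\infty \}$ we have $z \notin \Cl_{m-1}$ for all $m$, so the conditional probability of settling at $z$ is $\ge c$ at every step, and these are ``tried'' infinitely often; by the conditional Borel–Cantelli lemma (Lévy's extension) the event $\{ S^{(m)}_{\tau_m} = z \text{ for some } m \}$ occurs a.s.\ on $\{ z \notin \Cl_{m-1} \ \forall m \}$, a contradiction. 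To prove the claim, note that $\Cl_{m-1}$ is a finite set with $0 \in \Cl_{m-1}$, $z \notin \Cl_{m-1}$; by irreducibility there is a path of the walk from $0$ to $z$ that, right before hitting $z$, has only visited sites in $\Cl_{m-1}$ — this requires a little care because a priori such a path might be forced to leave $\Cl_{m-1}$. The fix is to first reach, inside $\Cl_{m-1}$, a site $y \in \Cl_{m-1}$ adjacent (in the sense of the support of $X$) to $z$, then jump to $z$ in one step; since the walk can travel within $\Cl_{m-1}$ from $0$ to any site of $\Cl_{m-1}$ with positive probability in a number of steps bounded only in terms of $\# \Cl_{m-1}$, and $\#\Cl_{m-1}$ is finite on the event considered but \emph{not} bounded in $m$ — so the bound $c$ would degrade. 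I would resolve this by not requiring a uniform $c$: instead work on the event $\{ z \notin \Cl_\infty \}$, on which $\Cl_\infty$ is a \emph{fixed} (random) set, choose a deterministic witnessing value $N_0 = N_0(\Cl_\infty, z)$ and constant $c_0 > 0$ such that for all $m$, $\Pr( S^{(m)}_{\tau_m} = z \mid \cF_{m-1} ) \ge c_0$ on $\{ \Cl_{m-1} \subseteq \Cl_\infty, z \notin \Cl_{m-1}\}$, using that $\Cl_{m-1} \subseteq \Cl_\infty$ is finite and that a walk started at $0$ has positive probability of hitting $z$ with its first step outside $\Cl_\infty$ landing on $z$ — and conclude by conditional Borel–Cantelli as before. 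This localisation to the finite set $\Cl_\infty$ is the crux, and the rest is bookkeeping; I would relegate the detailed estimate of $c_0$ to the appendix as indicated.
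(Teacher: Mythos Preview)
Your approach has a genuine gap. You correctly recognize that the cluster need not be an interval and that the crux is a uniform-in-$m$ lower bound on the conditional probability of absorbing $z$. But your attempted bound on $\Pr(S^{(m)}_{\tau_m} = z \mid \cF_{m-1})$ does not go through: for a \emph{single} walker to settle at $z$, it must reach $z$ along a path that stays inside $\Cl_{m-1}$ until the last step, and you give no reason such a path exists with probability bounded away from zero uniformly in $\Cl_{m-1}$. Your fix---conditioning on $\Cl_\infty$ and looking for a path from $0$ to $z$ whose ``first step outside $\Cl_\infty$'' lands on $z$---is circular: $\Cl_\infty$ is not $\cF_{m-1}$-measurable, and more to the point the walker at stage $m$ terminates on first exit from the \emph{finite} set $\Cl_{m-1}$, not from $\Cl_\infty$. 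A path through $\Cl_\infty$ may pass through sites of $\Cl_\infty \setminus \Cl_{m-1}$, where the walker would stop short of $z$; showing those intermediate sites are eventually all in $\Cl_{m-1}$ is exactly the statement you are trying to prove. (Also, your parenthetical that $\limsup |S_n|=\infty$ together with irreducibility forces every site to be hit a.s.\ is false for transient walks, and Proposition~\ref{prop:fill-finite-set} assumes only~\eqref{ass:irreducible}.)

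The missing idea is to use a \emph{block} of consecutive walkers rather than one. Fix any deterministic path $0=s_0,s_1,\ldots,s_{n_z}=z$ of positive probability $p_z$, which exists by~\eqref{ass:irreducible}. If $n_z$ consecutive walkers each follow this path for their first $n_z$ steps---an event of probability $p_z^{n_z}$, independent of the current cluster---then each walker either reaches $z$ or terminates at some $s_i$ with $i<n_z$, thereby occupying it; the next walker along the same path then gets strictly further. After at most $n_z$ such walkers, $z$ is occupied. This yields $\Pr(z \in \Cl_{m+n_z}\mid \cG_m)\ge p_z^{n_z}$ for every $m$, regardless of $\Cl_m$, and L\'evy's zero--one law finishes. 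This is precisely the paper's proof.
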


To describe our main results define, for $m \in \ZP$, 
\begin{equation}
\label{eq:max-radius}
r_m :=  \max \{ r \in \ZP :  \Z \cap [-r,r] \subseteq \Cl_m \} , 
\end{equation}
 the radius of the maximal centred interval contained in $\Cl_m$.
An easy consequence of the fact that $\# \Cl_m = m+1$ is that
\begin{equation}
    \label{eq:r-R-trivial}
    r_m \leq m/2, \text{ for all } m \in \ZP. 
\end{equation}

\subsection{Main results}
\label{sec:results}

It follows from Proposition~\ref{prop:fill-finite-set} that
$\lim_{m \to \infty} r_m = \infty$, a.s.
We are interested in quantifying the growth rate of~$r_m$.
Loosely speaking, our first result, Theorem~\ref{thm:idla-light-tail}, says that when $\Exp (X^2) < \infty$ and $\Exp X =0$, the inner radius $r_m$ grows at the maximal rate permitted by~\eqref{eq:r-R-trivial}. Here is the precise statement; we give some intuition for this result, and the criticality of the second-moment condition, in \S\ref{sec:discussion} below.

\begin{theorem}
\label{thm:idla-light-tail}
Suppose that~\eqref{ass:irreducible} holds, $\Exp ( X^2) < \infty$, and $\Exp X=0$. Then, a.s.,
\begin{equation}
\label{eq:inner_radius-lim}
    \lim_{m \to \infty} \frac{r_m}{m} = \frac{1}{2}.
\end{equation}
\end{theorem}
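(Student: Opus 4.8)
The upper bound is immediate: since $\#\Cl_m = m+1$, \eqref{eq:r-R-trivial} gives $r_m\le m/2$, so the content of the theorem is the matching lower bound $\liminf_{m\to\infty} r_m/m\ge \tfrac12$ a.s. Introduce the frontier quantities
$A_m:=\min\{k\ge 1: k\notin\Cl_m\}$ and $B_m:=\min\{k\ge1:-k\notin\Cl_m\}$, the first holes to the right and to the left of the origin, so that $r_m=\min(A_m,B_m)-1$; write $U_m^+:=\#(\Cl_m\cap\{1,2,\dots\})$ and $U_m^-:=\#(\Cl_m\cap\{-1,-2,\dots\})$, so that $U_m^++U_m^-=m$, $A_m-1\le U_m^+$ and $B_m-1\le U_m^-$. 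It then suffices to prove the two statements (a) \emph{balance}: $U_m^\pm/m\to \tfrac12$ a.s.; and (b) \emph{no macroscopic holes}: $U_m^+-A_m=o(m)$ and $U_m^--B_m=o(m)$ a.s. Indeed, (a) and (b) give $A_m/m\to\tfrac12$, $B_m/m\to\tfrac12$, hence $r_m/m\to\tfrac12$. Note that (a) and (b) are genuinely intertwined and will have to be bootstrapped together, using the crude a priori fact, already recorded after Proposition~\ref{prop:fill-finite-set}, that $r_m\to\infty$ a.s., which makes both frontiers $A_{m-1},B_{m-1}$ tend to infinity.

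The analytic engine for both parts is renewal theory for the overshoots of $S$. When walk $m$ is launched it stays inside the centred interval $[-(B_{m-1}-1),A_{m-1}-1]\subseteq\Cl_{m-1}$ until it first leaves it, and its eventual settling site $S^{(m)}_{\tau_m}$ is the first hole it meets. Because $\Exp X=0$ and $\Exp(X^2)<\infty$, the ascending and descending ladder heights of $S$ have finite mean; this yields (i) uniform integrability of the stopped walk and, via optional stopping, $\bE_0[S^{(m)}_{\tau_m}\mid\cF_{m-1}]=0$, so that (once the settling site is known to be close to the relevant frontier, by (b)) the probability that walk $m$ settles on the right is $\tfrac{B_{m-1}}{A_{m-1}+B_{m-1}}$ up to an error of order (mean overshoot)$/(A_{m-1}+B_{m-1})$; and (ii) the overshoot of $S$ over a level $x$ is tight as $x\to\infty$, converging to the stationary residual of the ladder renewal process — in particular (using irreducibility, via Remark~\ref{rem:irreducible}, to rule out periodicity) a right-settling walk lands \emph{exactly} on the current first hole $A_{m-1}$ with probability bounded away from $0$. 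Since $A_{m-1},B_{m-1}\to\infty$ a.s., these limiting estimates apply with vanishing error.

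For part (a), observe that $U_m^+$ increases by $1$ when walk $m$ settles on the right and is unchanged otherwise, and by (i) this conditional probability is $U_{m-1}^-/m$ up to a perturbation that is negligible once stray sites beyond the frontiers are controlled. This is a small perturbation of Bernard Friedman's balanced urn, and I would make $U_m^\pm/m\to\tfrac12$ a.s.\ precise by the standard stochastic-approximation argument: write $U_m^+-m/2$ as a martingale part plus a restoring drift plus a summably small error, and apply a martingale convergence/Robbins--Monro estimate. This is exactly the urn mechanism behind the $d=1$ picture alluded to in \cite{df,lbg}.

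The hard part is (b): excluding a hole that lingers near the origin while the cluster grows around it — possible in a single step because the walk can jump over a hole. I would classify each right-settling walk as either a \emph{frontier advance} (it fills the current first hole $A_{m-1}$) or a \emph{beyond-frontier deposit} (it fills a positive site $>A_{m-1}$), so that the number of frontier advances among the first $m$ walks equals $U_m^+$ minus the number of beyond-frontier deposits, and each advance raises $A$ by at least one, whence $A_m\ge U_m^+-\#\{\text{beyond-frontier deposits}\}$. It then remains to show the number of beyond-frontier deposits is $o(m)$ a.s.: each such event forces the walk to overshoot a frontier level by a macroscopic amount, so by the overshoot tightness of (ii), combined with the a priori bound $r_m\to\infty$ used to localise the relevant levels, the expected number of them among the first $m$ walks is $o(m)$; a Borel--Cantelli argument along a geometric subsequence, together with the monotonicity of $\Cl_m$, upgrades this to the a.s.\ statement (and handles the perturbation term left over in (a)). The main obstacle, and where I expect the real work to concentrate, is making the ``beyond-frontier deposits are rare'' and ``frontier advances happen at a positive rate'' estimates \emph{uniform in $m$} and robust to the stray geometry — this is precisely where quantitative renewal/overshoot bounds are needed and where $\Exp(X^2)<\infty$ is essential, since for a walk in the domain of attraction of a symmetric $\alpha$-stable law with $1<\alpha<2$ the overshoots are macroscopic and this argument correctly breaks down, consistent with Theorem~\ref{thm:idla-heavy-tail}.
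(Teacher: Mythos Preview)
Your decomposition into (a) balance and (b) no macroscopic holes is sensible, but the argument for (b) has a genuine gap. You claim that a ``beyond-frontier deposit'' forces the walk to overshoot a frontier level by a macroscopic amount, so that overshoot tightness makes such events $o(m)$. This is false. When walk~$m$ first exits $[-(B_{m-1}-1),A_{m-1}-1]$ on the right, it lands at $A_{m-1}-1+Z$ where $Z$ is the overshoot; if $Z=1$ it fills the frontier hole, but if $Z\ge 2$ and the site $A_{m-1}-1+Z$ is unoccupied (as it typically is, e.g.\ in the ``good'' configuration with no strays), the walk settles there --- a beyond-frontier deposit with a \emph{bounded} overshoot. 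For any non-simple walk the stationary overshoot satisfies $\Pr(Z\ge 2)=1-1/\mu>0$, so the cumulative number of beyond-frontier deposits is $\Theta(m)$, not $o(m)$. Your accounting $A_m\ge U_m^+-\#\{\text{beyond-frontier deposits}\}$ is correct, but it gives nothing. The relevant quantity is the number of \emph{current} stray sites $U_m^+-(A_m-1)$, which can be small even though cumulative deposits are large (strays get absorbed when the frontier catches up); but controlling this requires a different argument, and your (a) is circular without it, since the ``settles near the frontier'' input to the urn dynamics is exactly (b).

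The paper sidesteps this entirely by adapting the Lawler--Bramson--Griffeath counting. Rather than tracking where each walk settles, one fixes a target site $t\in(x,ux]$ and counts how many of the next $k$ (indefinitely extended) walks \emph{visit} $t$ before exiting a larger interval $[-x,sx]$; this count is binomial with parameter $p^+(t,x,s)\approx\tfrac12$ (by gambler's ruin, overshoot tightness, and local hitting, Lemmas~\ref{lem:kesten-gambler}--\ref{lem:kesten-visit-before-exit}). If more than $(s-1)x$ walks visit $t$, pigeonhole forces at least one to be still active when it does so (there are only $(s-1)x$ sites in $(x,sx]$ to absorb the others), hence $t\in\Cl_{\sigma_x+k}$ (Lemma~\ref{lem:over-filling}). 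Binomial concentration plus Borel--Cantelli along a geometric subsequence gives $\sigma_{ux}-\sigma_x\lesssim 2(s-1)x$, and optimising $s\downarrow u\downarrow 1$ yields $\limsup\sigma_x/x\le 2$. The key conceptual difference is that the paper never needs to know \emph{where} a walk settles, only that many walks \emph{pass through} each candidate hole.
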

\begin{remarks}
\phantomsection
\label{rems:SSRW}
\begin{myenumi}[label=(\roman*)]
\item\label{rems:SSRW-i}
For SSRW, when $\Pr (X = +1 ) = \Pr (X = -1) =1/2$,
it is well known that $\lim_{m \to \infty} r_m /m  = 1/2$, a.s. Indeed, in this case $\Cl_m$ is always a contiguous interval, and 
the equivalence of the model to (Bernard) Friedman's urn, as described at~\cite[pp.~107--8]{df} and~\cite[p.~2118]{lbg}, yields $\lim_{m \to \infty} r_m/m = 1/2$, a.s., via (David) Freedman's strong law~\cite{freedman}. 
\item\label{rems:SSRW-ii}
{The $d=1$ case of the  ``$\liminf$'' half of Theorem~2.3 of~\cite{blachere} yields~\eqref{eq:inner_radius-lim} for~$X$ with $\Exp X =0$ and $\Exp ( | X|^{p} )<\infty$ for~$p = 4$. An inspection of the proof of the $d=1$ case of Theorem~2.3 of~\cite{blachere} (in \S2.1 of the paper) shows that the $p=4$ hypothesis could, we believe, be relaxed to $p=3$ without changing the proofs.
In Lemma 2.9 (which is specific to $d=1$), the hypothesis of finite $3$rd moments is used to apply an estimate for exiting a symmetric interval on the right or left with a quantitative error bound;
Kesten's gambler's ruin estimate,
that we describe in our Lemma~\ref{lem:kesten-gambler} below, needs only $p=2$ but does not provide the quantitative error term that is apparently needed to make the argument in~\cite{blachere} work. 
 The $p=4$ hypothesis is used in Lemma 2.6 in~\cite{blachere} to apply the $q=2$ case of Lemma 2.5 there, which needs $p=2+q$ moments, and provides bounds on moments of the exit point from the cluster. However, the full $p=4$ assumption appears needed only for $d \geq 2$. For $d=1$  the argument needs 
 (in Lemma~2.6 at the top of p.~623 of~\cite{blachere}
 and again in Lemma~2.9 at the bottom of p.~626)
 the $q=1$ case of Lemma 2.5, and hence $p = 3$ moments. This is related to
  the renewal-theoretic fact that the overshoot of a distant level by zero-mean random walk has a uniformly finite mean
 only if the original increment has a finite 3rd moment, as discussed in~\S\ref{sec:overshoots} below; Lemmas 2.4--2.5 of Blach\`ere~\cite{blachere}
 correspond to versions of this phenomenon in general dimensions.
 Thus $p=3$ seems to be an essential
 requirement for the argument in~\cite{blachere} as presented there. 
  The main contribution of our Theorem~\ref{thm:idla-light-tail} is to reduce the hypothesis on moments required for~\eqref{eq:inner_radius-lim}  to finiteness of variance ($p=2$). The  origin of the higher moments requirements in~\cite{blachere} seems to be the approach there to bounding Green's functions of the walk: e.g., in Lemma 2.4 of~\cite{blachere}  the value of a Green's function in an annulus is bounded by the square of the radius. Our approach (which in this respect follows~\cite{lbg}) uses binomial concentration over a relatively large number of walkers to obtain high probability estimates, rather than bounding the corresponding single-walker moments, as we explain in \S\ref{sec:inner-radius-heuristics} below.}
 \item\label{rems:SSRW-iii}
 {  Theorem~2.3 of~\cite{blachere}
 also provides 
(under further moments conditions, in some cases, with at least $p>7$)
 upper bounds on  fluctuations; e.g., when $X$ has all moments, for every $\eps>0$ it holds that $| r_m - m | = o ( m^{(1/2)+\eps} )$, a.s.}
 \item\label{rems:SSRW-iv}
{ Blach\`ere's 
 Proposition~2.20~\cite{blachere}
 shows that the $p=2$ moments hypothesis in Theorem~\ref{thm:idla-light-tail} is best possible;
 our 
 Theorem~\ref{thm:idla-heavy-tail} below gives, as far as we know,
 the first  quantitative result in the infinite-variance setting.}
 \end{myenumi}
\end{remarks}

We are also interested in the case where $\Exp (X^2) = \infty$. We will explore this case
under the following stable-domain hypothesis for $1 < \alpha < 2$.
\begin{description}
\item
[\namedlabel{ass:stable-alpha}{S$_\alpha$}]
Suppose that
$X \in \Z$ has a symmetric distribution (i.e., $X \eqd - X$), and
its characteristic function $\phi (t) := \Exp ( \re^{itX} )$
satisfies
\begin{equation}
    \label{eq:kesten-assumption}
    \lim_{t \to 0 } \left( | t|^{-\alpha} ( 1 - \phi (t) ) \right) = \beta \in (0,\infty).
\end{equation}
\end{description}
\begin{remarks}
\phantomsection
\label{rems:stable}
\begin{myenumi}[label=(\roman*)]
\item\label{rems:stable-i}
The hypothesis~\eqref{eq:kesten-assumption} is equivalent
to the assumption that $X$ is in the domain of normal attraction of~$\zeta_\alpha$, the symmetric $\alpha$-stable distribution with characteristic function {$\re^{-\beta |t|^\alpha}$}: see Theorem 2.6.7~\cite[pp.~92--3]{il}. Recall that being in the domain of normal attraction $\zeta_\alpha$ means that $n^{-1/\alpha} S_n$ converges in distribution to $\zeta_\alpha$, i.e., the slowly-varying component of the scaling sequence is constant.   
\item\label{rems:stable-ii}
For $\alpha \in (1,2)$, assumption~\eqref{ass:stable-alpha} implies that $\Exp |X| < \infty$ (in fact $\Exp ( |X|^\gamma ) < \infty$ for every $\gamma < \alpha$), with $\Exp X = 0$ (due to symmetry), 
but $\Exp ( X^2 ) = \infty$~\cite[p.~93]{il}.
\end{myenumi}
\end{remarks}

Loosely speaking, our second main result says that, under hypothesis~\eqref{ass:stable-alpha}
with $\alpha \in (1,2)$, it still holds that $r_m$ grows at linear rate with $m$ (in $\liminf$ and $\limsup$ sense),
but this rate is now strictly less than the maximal rate $1/2$ permitted by~\eqref{eq:r-R-trivial}. That  $r_m \not\to 1/2$ in the case where $\Exp (X^2 ) = \infty$ is already implied by Proposition~2.20 of~\cite{blachere}; the next result gives more quantitative information.
We give some intuition for the contrast between Theorems~\ref{thm:idla-light-tail}  and~\ref{thm:idla-heavy-tail} in \S\ref{sec:discussion} below.

\begin{theorem}
\label{thm:idla-heavy-tail}
Suppose that~\eqref{ass:irreducible} holds, 
and that~\eqref{ass:stable-alpha} holds with $\alpha \in (1,2)$. Then, there exist
constants $c_\alpha, c'_\alpha$
with $0 < c_\alpha \leq c'_\alpha < 1/2$, such that, 
\begin{equation}
    \label{eq:r-m-heavy}
 c_\alpha \leq  \liminf_{m \to \infty} \frac{r_m}{m} \leq \limsup_{m \to \infty} \frac{r_m}{m} \leq c'_\alpha, 
  \as
\end{equation}
Moreover, one may take for the constant $c_\alpha$ in~\eqref{eq:r-m-heavy} the expression
\begin{align}
\label{eq:c-alpha}
    c_\alpha & =  \frac{(\alpha -1)  (2-\alpha)^{2-\alpha}}{(4-\alpha)(3-\alpha)^3} .
\end{align}
\end{theorem}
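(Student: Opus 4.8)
The plan is to analyse the IDLA process on $\Z$ by separately tracking how far the cluster extends to the right and to the left of the origin. Write $R_m := \max\{ x \geq 0 : x \in \Cl_m\}$ and $L_m := -\min\{x \leq 0 : x \in \Cl_m\}$ for the rightmost and leftmost occupied sites, and let $a_m := \#(\Cl_m \cap \ZP)$, $b_m := \#(\Cl_m \cap (-\ZP))$ be the counts on each side; then $a_m + b_m = m+1$. The key structural feature exploited throughout is that, to reach a vacant site on the right, a walker started at $0$ must cross the (at the relevant time, necessarily contiguous) occupied block to the right of the origin, and the site it occupies is governed by the \emph{overshoot} of the walk over the current rightmost point $R_{m-1}$; and likewise on the left. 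So the increments of $(R_m)$ and $(L_m)$ are, conditionally, distributed like overshoots of the random walk $S$ above a level, which is exactly where renewal theory for the ladder process enters. The inner radius satisfies $r_m = \min(R_m', L_m')$ where $R_m', L_m'$ are the lengths of the maximal contiguous occupied runs to the right and left of $0$; a separate argument (the ``no holes of positive density'' part) is needed to compare $R_m', L_m'$ with $R_m, L_m$ and with $a_m, b_m$.

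For the finite-variance theorem, the strategy is: (1) show that the fraction of walkers that ``choose'' the right side converges to $1/2$ a.s., i.e.\ $a_m/m \to 1/2$, by a self-correcting / stochastic-approximation argument — when the cluster sticks out much further on one side, a walker dispatched from $0$ is more likely to fall on the shorter side, so the proportions are pushed back toward balance (this is the generalisation of the Friedman-urn heuristic that is exact for SSRW, cf.\ Remark~\ref{rem:SSRW}); (2) show that with finite variance the overshoots have bounded mean (by Lorden's inequality / renewal theory, since $\Exp(X^2)<\infty$ gives finite mean for the ascending ladder height), hence $R_m - a_m$ and $L_m - b_m$ are $o(m)$, so both sides grow like $m/2$; (3) rule out macroscopic holes, so $r_m / m \to 1/2$. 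Step (1) is the delicate probabilistic core; one makes it rigorous by writing $a_m$ as a sum of conditionally-Bernoulli increments with success probability $p_m = \bP_0(\text{walk exits } \Cl_{m-1} \text{ on the right})$, showing $p_m$ is close to $1/2$ when $a_{m-1}\approx b_{m-1}$ via the fact that a mean-zero random walk crossing a long occupied interval is roughly equally likely to exit either end (a harmonic-measure/optional-stopping estimate), and then invoking a Robbins–Monro or Freedman-type a.s.\ convergence result for the resulting recursion.

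For the heavy-tailed theorem, I would prove the lower bound $\liminf r_m/m \geq c_\alpha$ by a quantitative version of the same mechanism: bound below the rate at which the \emph{shorter} side receives new sites that actually extend its contiguous block. The loss of the value $1/2$ comes from the fact that under~\eqref{ass:stable-alpha} the ascending ladder height has infinite mean, so overshoots over a level of order $n$ are themselves of order $n$ (by the stable-overshoot asymptotics: the overshoot over level $x$, rescaled by $x$, converges to a nondegenerate law whose density is the Dynkin–Lamperti arcsine-type $\tfrac{\sin\pi(\alpha-1)}{\pi}y^{-(\alpha-1)}(1-?)$ shape on the relevant range) — hence a positive fraction of walkers ``jump over'' the existing block and land beyond it, creating gaps rather than filling the contiguous run. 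To get the explicit constant $c_\alpha$ in~\eqref{eq:c-alpha} one has to: (i) estimate, for a walker started at $0$ with the right side occupied up to $R$, the probability that it lands within distance $\epsilon R$ of the current right frontier (so that the new site is ``useful'' for extending $r_m$), using the Dynkin–Lamperti limiting overshoot distribution; (ii) integrate this against the (asymptotically deterministic, by the law of large numbers feedback) trajectory of the right frontier to get the growth rate of the filled block; (iii) optimise/track constants through the $\alpha$-dependent Beta-integral, which is where the factors $(\alpha-1)$, $(2-\alpha)^{2-\alpha}$, $(4-\alpha)$, $(3-\alpha)^3$ will come from. For the upper bound $\limsup r_m/m \leq c_\alpha' < 1/2$ one argues that, because a positive-density set of added sites lands far beyond the contiguous block, the contiguous block is genuinely shorter than $a_m$ by a positive fraction infinitely often — equivalently, with positive asymptotic frequency a hole of size comparable to $R_m$ is created just inside the frontier and takes a long time to fill.

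The main obstacle I expect is making the self-correcting feedback for the side-selection proportions fully rigorous in the heavy-tailed regime while simultaneously controlling the contiguous-block length: the process $(a_m,b_m,r_m,R_m,L_m)$ is not Markov in any simple way, the conditional exit probabilities depend on the full occupied configuration (not just its extremes), and the overshoot that determines the new site can land in an already-occupied region, so the map from ``walker chose the right'' to ``$r_m$ increased'' is itself random and history-dependent. Handling this cleanly — presumably by a sandwiching argument that bounds the true process between two more tractable renewal-type processes and then shows both bounds have the same (or comparable) linear growth rate with the stated constant — is the technical heart of the heavy-tailed theorem, and getting the sharp constant $c_\alpha$ rather than merely \emph{some} positive constant is what forces the careful Dynkin–Lamperti computation in step (i)--(iii) above.
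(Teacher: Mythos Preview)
Your proposal rests on a structural claim that is false for long-range walks: you assert that the occupied block to the right of the origin is ``necessarily contiguous'' and that the new site is governed by the overshoot of the walk over $R_{m-1}$. Neither holds. With non-nearest-neighbour increments the cluster $\Cl_m$ has gaps (this is exactly why $r_m/m$ stays bounded away from $1/2$), and the $j$th walker terminates at its first visit to \emph{any} vacant site, which may be a gap well inside $[-R_{m-1},R_{m-1}]$, not the overshoot over the extreme point. So the reduction to a renewal/overshoot process for $(R_m)$ does not go through, and with it your plan for computing $c_\alpha$ by integrating the Dynkin--Lamperti overshoot density collapses. You partially acknowledge this in your final paragraph, but the ``sandwiching'' you propose is not a plan; it is a restatement of the difficulty.

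The paper circumvents this dependence on the full cluster configuration by adapting the over-filling argument of Lawler--Bramson--Griffeath: for each target site $t\in(x,ux]$, one counts how many of the next $k$ \emph{indefinitely extended} walks visit $t$ before exiting the fixed interval $[-x,sx]$. These counts are i.i.d.\ binomial, with success probability controlled by Kesten's estimates for $\bP_k(T_0<\eta_{[-cN,N]})$ (hitting a specific point before exit from an interval, not an overshoot). If the count exceeds the number of sites in $(x,sx]$, pigeonhole forces at least one walker to be still active when it reaches $t$, so $t$ is occupied. The constant $c_\alpha$ in~\eqref{eq:c-alpha} arises from optimising the parameters $u,s$ in this argument against Kesten's lower bound on $q_\alpha(y;c)$, with the explicit choice $u=1+h$, $s=1+h+h^2$, $h=2-\alpha$; no Beta integral appears. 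For the upper bound $c'_\alpha<1/2$, the paper again uses Kesten --- now the limiting \emph{two-sided} exit location $S_{\eta_{[-x,x]}}$, not a one-sided overshoot --- to show that a positive fraction of walkers land outside $[-Ax,Ax]$, and a submartingale/Azuma argument to make this uniform. Your one-sided Dynkin--Lamperti picture is the wrong tool for both directions.
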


\begin{remark}
\label{rem:idla-heavy-tail}
The value of $c_\alpha$ in~\eqref{eq:c-alpha} is not the best that
can be extracted from our method (see \S\ref{sec:inner-radius-infinite-variance-lower-bound}), but
was chosen for its relatively simple formula, together with its property that $c_\alpha \uparrow 1/2$ as $\alpha \uparrow 2$, which demonstrates some continuity between Theorem~\ref{thm:idla-heavy-tail} and Theorem~\ref{thm:idla-light-tail}.
We do not give here an explicit expression for $c_\alpha'$, 
though an explicit but not very informative bound could be extracted using our methods.
\end{remark}

\subsection{Overview and discussion}
\label{sec:discussion}

{Before giving the outline of the paper and stating some open problems, we give some intuition behind the main results,  Theorems~\ref{thm:idla-light-tail} and~\ref{thm:idla-heavy-tail}, which also indicates
some of the mathematical ideas that are needed in the formal proofs. Suppose that the current cluster consists of the completely filled interval $[-x,x]$ and some sporadic sites outside this. To understand the growth of the cluster, consider a random walker that starts from the origin and is stopped when it exits the interval $[-x,x]$. Under our hypotheses (either the increment distribution has finite 
variance and zero mean, or has infinite variance and is symmetric) the walk has probability close to $1/2$ of exiting on the right (say) and how far it travels from the cluster is related to the \emph{overshoot} of level~$x$. It follows from classical random walk fluctuation and renewal theory
that the overshoot is tight (as $x \to \infty$) if $\Exp ( X^2) < \infty$, but, under hypothesis~\eqref{ass:stable-alpha} for $\alpha \in (1,2)$, the overshoot itself lives on scale~$x$ (we collect relevant renewal-theoretic tools in \S\ref{sec:ingredients} below). In the finite-variance case, a walker exiting the cluster $[-x,x]$ over $x$ thus lands within distance $O(1)$ and will, with high probability, visit any site in the neighbourhood of $x$ before reaching $u x$ ($u \approx 1$ but $u>1$). This gives some intuition behind Theorem~\ref{thm:idla-light-tail}. On the other hand, in the infinite-variance case, a walker exiting the cluster $[-x,x]$ over $x$ will, with appreciable probability, land to the right of $Ax$ for large $A > 1$, and thus not contribute to growing the bulk of the cluster until much later, which gives  intuition behind Theorem~\ref{thm:idla-heavy-tail}.
}

The bulk of the rest of the paper provides the 
proofs of Theorems~\ref{thm:idla-light-tail} and~\ref{thm:idla-heavy-tail}.
Section~\ref{sec:ingredients} presents preparatory results concerning properties of the underlying random walk; 
these bring together known results from random walk and renewal theory (\S\ref{sec:overshoots}), with some 
important hitting and exit estimates for integer-valued random walks (\S\ref{sec:kesten}), due to Kesten~\cite{kesten1961a,kesten1961b}. The proofs of the main results are separated into proofs of lower bounds on the growth rate of $r_m/m$ (\S\ref{sec:inner-radius}) and upper bounds on the growth rate  of $r_m/m$ in the infinite variance case  (\S\ref{sec:inner-radius-infinite-variance-lower-bound}).
The strategy of \S\ref{sec:inner-radius}
adapts, in part, the approach of~\cite{lbg} (as did~\cite{blachere}), combined with the results of Kesten mentioned above;
an outline of the argument is presented in \S\ref{sec:inner-radius-heuristics}. 
Section~\ref{sec:inner-radius-infinite-variance-lower-bound} uses further results of Kesten, in the neighbourhood of the Dynkin--Lamperti renewal theorem (see \S\ref{sec:overshoots}). 
The proof of Theorem~\ref{thm:idla-light-tail} is accomplished in \S\ref{sec:inner-radius-finite-variance}, while the proof of Theorem~\ref{thm:idla-heavy-tail} combines a lower bound from \S\ref{sec:inner-radius-infinite-variance-upper-bound} with upper bound from \S\ref{sec:inner-radius-infinite-variance-lower-bound}, and is concluded in the latter section. 
Finally, to avoid disrupting the flow of the paper, we defer to
the appendix the proofs of some auxiliary results. In particular, 
Appendix~\ref{sec:appendix} gives the short
proof of the eventual filling statement in Proposition~\ref{prop:fill-finite-set}, and
Appendix~\ref{sec:appendix-equicontinuity} some technical elements about
certain families of probability functions that are introduced in \S\ref{sec:kesten}.

We finish this section with some remarks and open problems. 
Theorem~\ref{thm:idla-heavy-tail} poses some obvious questions. Firstly:

\begin{problem}
Suppose that~\eqref{ass:irreducible} holds, 
and that~\eqref{ass:stable-alpha} holds with $\alpha \in (1,2)$. Does it hold that $\liminf_{m \to \infty} r_m/m = \ell_\alpha$, a.s., for some constant $\ell_\alpha$, a.s.? If so, what is its value? 
\end{problem}

Of course, if it exists, $\ell_\alpha$ must satisfy $c_\alpha \leq \ell_\alpha \leq c'_\alpha$, by~\eqref{eq:r-m-heavy},
and (see Remark~\ref{rem:idla-heavy-tail}) it must hold that $\ell_\alpha \to 1/2$ as $\alpha \uparrow 2$. One might hope to be able to apply a zero--one law to obtain existence of $\ell_\alpha$, but we have not been able to do so.  Two further questions in this regime are the following.

\begin{problem}
\label{prob:limsup-r}
Suppose that~\eqref{ass:irreducible} holds, 
and that~\eqref{ass:stable-alpha} holds with $\alpha \in (1,2)$. Does  $\lim_{m \to \infty} r_m/m$ exist in this case,
as it does in Theorem~\ref{thm:idla-light-tail}?  
\end{problem}

Further questions arise in the case with $\Exp |X| = \infty$.

\begin{problem}
Suppose that~\eqref{ass:stable-alpha} holds with $\alpha \in (0,1)$, or $\alpha =1$. What is the behaviour of $r_m$ now?
\end{problem}

The underlying random walks are of very different character in the case $\alpha \in (0,1]$: for example,
when $\alpha \in (0,1)$ the walks are (oscillatory) transient; this does not obviously indicate
a more disperse aggregate, however, as the walks will typically aggregate after 
fewer steps. 
The boundary case $\alpha =1$ is likely to be delicate, and there are some technical obstructions: for example, the results of Kesten~\cite{kesten1961a,kesten1961b} that we use below often omit the case $\alpha =1$, although comparable results for stable diffusions are known~\cite{Ch_Ki_So2010}.
Lastly:

\begin{problem}
\label{prob:higher-dimensions}
Consider random walks in $\Z^d$, $d \geq 2$.
\end{problem}

 The seminal work~\cite{lbg} provides a shape theorem (convergence to a ball) for 
SSRW on $\Z^d$, and Theorem 2.3~\cite{blachere}
extends the shape theorem (convergence to a ball, up to linear transformation to account for covariance) to  walks with mean zero and sufficiently many moments ($11+4d+\varepsilon$ moments suffice for all cases, but for some results~$4$ moments is enough).
One aspect of Problem~\ref{prob:higher-dimensions}
would be to   extend the shape theorem to walks with  finite $(d+1)$th moments, the necessary condition 
provided by Proposition 2.20 of~\cite{blachere}. The infinite-variance (indeed, infinite $(d+1)$th-moment) case seems largely open for $d \geq 2$. 
For stable-domain walks of index $\alpha \in (1,2)$,
we know of no quantitative results for multidimensional IDLA, and also no multidimensional analogues of Kesten's results that we apply for the $d=1$ case studied in the present paper,
although for symmetric stable processes
some Green's function estimates are known~\cite{Ch_Ki_So2010}.

\section{Ingredients from random walks and renewal theory}
\label{sec:ingredients}

\subsection{Ladder processes and overshoots}
\label{sec:overshoots}

Recall that on probability space $(\Omega', \cF', \bP_x)$, $x \in \Z$,
we have a random walk $S = (S_n)_{n \in \ZP}$ started from $S_0 =x$
whose increment distribution is that of the $\Z$-valued random variable~$X$ underlying our IDLA model.
We introduce some additional notation to enable us to discuss classical fluctuation theory
for this random walk. 
Define the strict ascending ladder times $\lambda_0:=0$ and
\begin{equation}
\label{eq:ascending-ladder-times}
\lambda_k := \inf \{ n \geq \lambda_{k-1} : S_n > S_{\lambda_{k-1}} \}, \text{ for } k \in \N;     
\end{equation}
as usual, $\inf \emptyset := \infty$. 

We suppose that $\Exp |X| < \infty$ and $\Exp X =0$. By a result of Chung and Fuchs,
this implies that the random walk $S_n$ is recurrent~\cite[p.~615]{feller2}, so,
for every $x \in \Z$,
\begin{equation}
\label{eq:limsup}
\bP_x \Bigl( \limsup_{n \to \infty} S_n = \infty \Bigr) = \bP_x \Bigl( \liminf_{n \to \infty} S_n = -\infty \Bigr) = 1 .
\end{equation}
In particular, $\bP_x (\lambda_k < \infty ) = 1$ for every $k \in \ZP$, and
$S_{\lambda_k} = \max_{0 \leq n \leq \lambda_k} S_n$ are strictly increasing in~$k$.

Define $L_n := S_{\lambda_n}$ for $n \in \ZP$; then
$x = L_0 < L_1 < L_2 < \cdots$ is the (strict, ascending) \emph{ladder height} process associated with $S$.
By the strong Markov property and spatial homogeneity, the increments $Y_n := L_n - L_{n-1}$, $n \in \N$,
are i.i.d., $\N$-valued, and $L_n = x+\sum_{i=1}^n Y_i$ represents the ladder height process as a renewal process
with increment distribution~$Y := Y_1$. 
By translation invariance, the distribution of the ladder times~$\lambda_k$ and ladder increments~$Y_n$
are the same for every $\bP_x$, regardless of the starting point $x \in \Z$ of the random walk. In such cases where the starting point is unimportant, we will abuse notation slightly and write simply $\bP$, $\bE$ on occasion.

Suppose next that $S_0 = x \equiv 0$.
For $y \in \ZP$, define the renewal
counting process
\[ N_y := \inf \{ n \in \N : L_n > y\} = \# \{ n \in \ZP : L_n \leq y \} .\]
Note $L_{N_0} = L_1 = Y_1$, $N_y \in \N$, and $L_{N_y -1} \leq y < L_{N_y}$, $\bP_0$-a.s.~for every~$y \in \ZP$.
Define the \emph{residual life-time} process associated with the ladder-height renewal process by
\begin{equation}
\label{eq:def_residual}
    Z_y := L_{N_y} - y, \text{ for all } y \in \ZP,
\end{equation}
which satisfies $Z_y \in \N$; see~\cite[pp.~140--1]{asmussen} or~\cite[\S XI.4]{feller2}
for background on the terminology and renewal-theoretic context. A fundamental observation~\cite[p.~9]{asmussen} is that
 $Z_0, Z_1, Z_2, \ldots$ forms an irreducible  Markov chain on 
 a
 subset of $\N$ with transitions given by
 \begin{equation}
     \label{eq:overshoot-chain}
 Z_{y+1} = \begin{cases} Z_y -1 & \text{if } Z_y \geq 2, \\ Y_{N_{y+1}} &\text{if } Z_y = 1 .\end{cases}
 \end{equation}
Furthermore, the Markov chain $(Z_n)_{n \in \ZP}$ is aperiodic, i.e. $\text{gcd} \{n \in \ZP: Z_n = 1\} = 1$.
Indeed (see Remark~\ref{rem:irreducible}) in this case the support of $X$ contains some $x_+$ and $-x_-$,
with $x_+, x_- \in \N$ and $\gcd (x_+, x_-) =1$. Hence we can find $k, \ell \in \N$ with $k x_+ - \ell x_- = 1$, and with positive probability the random walk $S$ can take $\ell$ steps of value $x_-$ followed by $k$ steps of value $x_+$, meaning that the ladder variable has $\Pr (Y=1) >0$.

Returning to the random walk, we denote the first passage time of~$S$ above level $y \in \ZP$ by 
\begin{equation}
\label{eq:rho_x-def}
    \rho_y := \inf \{ n \in \ZP : S_n > y \} ,
\end{equation}
and we call $S_{\rho_y} - y$ the first (right) \emph{overshoot} of level $y$ by the random walk~$S$. 
Since $\rho_y$ is necessarily a ladder time, it holds that
\begin{equation}
\label{eq:overshoot-residual}
S_{\rho_y} - y = L_{N_y} - y = Z_y, ~\bP_0\text{-a.s.}, \text{ for every } y \in \ZP; \end{equation}
thus overshoots of the random walk are equivalent to residual life-times of the associated ladder-height renewal process.

\begin{remark}
\label{rem:left-overshoots}
We state all the results in this section for \emph{right} overshoots and
\emph{increasing} ladder variables, but, evidently, by working with the increment
distribution~$-X$, we can translate everything to left overshoots and decreasing ladder variables.
\end{remark}

For most of the rest of this section, we assume additionally that the increments have finite variance:
\begin{description}
\item
[\namedlabel{ass:moments}{M}]
Suppose that $\sigma^2 := \Exp ( X^2 ) \in (0,\infty)$ and $\Exp X = 0$.
\end{description}
The following result presents some key properties of overshoots.

\begin{proposition}
\label{prop:walk-overshoot}
Suppose that~\eqref{ass:irreducible} and~\eqref{ass:moments} hold. 
Then $\mu :=  \bE Y$ has $1 \leq \mu < \infty$, and
\begin{equation}
    \label{eq:mu-formula}
   \mu = \exp \left\{ \sum_{n=1}^\infty \frac{1}{n} \left[ \frac{1}{2} - \bP ( S_n > 0 ) \right] \right\} .
\end{equation}
Moreover, define the probability distributions $(\pi_k)_{k \in \N}$ 
and $(\psi_k)_{k \in \N}$ by
\begin{equation}
    \label{eq:pi-k}
 \pi_k := \frac{\bP ( Y \geq k )}{\mu} , ~~~
 \psi_k := \frac{ k \bP ( Y = k)}{\mu}, 
 \text{ for } k \in \N.\end{equation}
 \begin{thmenumi}[label=(\roman*)]
    \item
    \label{prop:walk-overshoot-i}
Let $Z_\infty$, $L_\infty$ denote independent
random variables with distributions given by $\pi$, $\psi$ from~\eqref{eq:pi-k}, respectively, and set $U_\infty := \max (Z_\infty, L_\infty)$. Then
\[ \sup_{y \in \ZP} \bP_0 ( Z_y \geq k ) \leq \bP_0 ( U_\infty \geq k) , \text{ for all } k \in \ZP. \] 
    \item
    \label{prop:walk-overshoot-ii}
    It holds that, for every~$x \in \Z$ and every~$k \in \N$, 
\begin{equation}
    \label{eq:pi-limit}
 \lim_{y \to \infty} \bP_x ( Z_y = k  ) = \pi_k .\end{equation}
 \item 
 \label{prop:walk-overshoot-iii}
 Suppose, additionally, that $\Exp ( |X|^p ) < \infty$, for some $p >2$. Then, 
\begin{equation}
\label{eq:overshoot-moments-bound}
\sup_{y \in \ZP} \bE_x \bigl[ Z_y^{p-2} \bigr] < \infty .\end{equation}
\end{thmenumi}
\end{proposition}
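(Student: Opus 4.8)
The plan is to recognise that the three assertions are essentially classical renewal theory applied to the ladder-height renewal process $(L_n)_{n \in \ZP}$ with increment $Y$, combined with the Spitzer-type identity for $\mu = \bE Y$. First, for the formula $\mu = \bE Y$ and~\eqref{eq:mu-formula}, I would invoke the classical fluctuation-theory identity (Spitzer's formula; see Feller~\cite{feller2}, \S XII or \S XVIII): under~\eqref{ass:moments} the walk is recurrent and oscillates, the mean ladder height is finite, and $\log \bE Y = \sum_{n \geq 1} n^{-1} \bigl( \tfrac12 - \bP(S_n > 0)\bigr)$; finiteness of the sum follows because $\bP(S_n > 0) \to \tfrac12$ (central limit theorem) at a rate controlled by a Berry--Esseen / Edgeworth estimate, so the partial sums converge. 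That $\mu \geq 1$ is immediate since $Y \in \N$ a.s.\ (strict ascending ladder heights are positive integers).

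For part~\ref{prop:walk-overshoot-ii}, this is the discrete renewal theorem for the residual-lifetime chain. Since $\mu < \infty$ and the renewal process is aperiodic (established in the text via the $\gcd(x_+,x_-)=1$ argument giving $\bP(Y=1)>0$), the residual-lifetime chain $(Z_y)_{y \in \ZP}$ defined by~\eqref{eq:overshoot-chain} has stationary distribution $\pi_k = \bP(Y \geq k)/\mu$, and the discrete renewal theorem (Blackwell / Erd\H{o}s--Feller--Pollard, see~\cite{asmussen} or~\cite[\S XIII]{feller2}) gives $\bP_0(Z_y = k) \to \pi_k$. For general starting point $x \in \Z$ one uses~\eqref{eq:overshoot-residual}: from any $\bP_x$, the walk a.s.\ reaches level $0$ (recurrence), after which the overshoot process above large levels $y$ is, by the strong Markov property at the first-passage time $\rho_0$ and spatial homogeneity, a delayed version of the $\bP_0$ process, so the limit is unchanged. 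One should check $\pi$ is a probability distribution: $\sum_{k \geq 1} \bP(Y \geq k) = \bE Y = \mu$, which is exactly the normalisation.

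For part~\ref{prop:walk-overshoot-i}, the uniform (in $y$) stochastic bound: I would argue that, started from $\bP_0$, the chain $(Z_y)$ is \emph{stochastically dominated uniformly} by $U_\infty = \max(Z_\infty, L_\infty)$. The idea is that $Z_y$ is the residual life of the renewal after $y$ steps; at the moment a renewal falls on a site $\le y$, the overshoot equals the full next increment \emph{conditioned on being the one that crosses}, which is size-biased, i.e.\ distributed as the spread/length-biased variable with law $\psi_k = k\bP(Y=k)/\mu$; and if no renewal is exactly at the level the chain just counts down. A clean route: use the standard renewal-theoretic fact that for every $y$, $Z_y$ is stochastically below the residual life of a \emph{stationary} renewal process, whose residual life has law $\pi$, OR is part of a size-biased increment of law $\psi$; the pointwise maximum $U_\infty$ dominates both. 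Concretely, condition on $N_y$ and the increment $Y_{N_y} = L_{N_y} - L_{N_y-1}$ straddling $y$: given $Y_{N_y} = \ell$, the overshoot $Z_y$ is uniform on $\{1,\dots,\ell\}$ in the aperiodic limiting sense and always $\le \ell$, and $Y_{N_y}$ has the size-biased law $\psi$ in the limit and is stochastically controlled for all $y$ by a coupling with a stationary renewal sequence. Assembling: $\bP_0(Z_y \ge k) \le \bP(Y_{N_y} \ge k) $ is \emph{not} uniformly bounded by $\bP(L_\infty \ge k)$ directly for small $y$, so instead one bounds $\bP_0(Z_y \ge k) \le \bP_0(Z_\infty \ge k) + \bP(\text{a "fresh" large increment straddles } y)$ and the latter is $\le \bP(L_\infty \ge k)$; taking the max absorbs the two terms into $\bP(U_\infty \ge k)$. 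The details of making this domination truly uniform in $y$ — rather than only asymptotic — is the main obstacle, and I expect it to be handled by an explicit coupling of $(L_n)$ with its stationary (two-sided) version, as in~\cite[Ch.~VII]{asmussen}, so that for every $y$ the residual life is dominated by the stationary residual life except on the event that the straddling increment is itself atypically large, which is the size-biased tail.

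Finally, part~\ref{prop:walk-overshoot-iii}: if $\bE|X|^p < \infty$ with $p > 2$, then $\bE Y^{p-1} < \infty$ (the ladder height has one moment fewer than the walk increment, a standard fact from Wiener--Hopf / Doney's moment results, e.g.~\cite{feller2}), hence the size-biased law $\psi$ has $\bE L_\infty^{p-2} = \bE Y^{p-1}/\mu < \infty$ and $\bE Z_\infty^{p-2} = \bE Y^p /((p-1)\mu)$-type expression is finite, so $\bE U_\infty^{p-2} < \infty$; by part~\ref{prop:walk-overshoot-i}, $\sup_{y} \bE_0 Z_y^{p-2} \le \bE U_\infty^{p-2} < \infty$, and the $\bP_x$ case follows as before via the strong Markov property at $\rho_0$ (the overshoot from level $0$ added along the way only shifts things by a fixed a.s.-finite amount with the required moments). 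The expectation bound transfers from $\bP_0$ to $\bP_x$ because, writing $Z_y$ under $\bP_x$ as the overshoot from level $0$ restarted, $Z_y^{\bP_x} \le Z_{y}^{\text{(shifted)}}$ with the same tail class.
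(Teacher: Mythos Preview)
Your overall strategy matches the paper's: each part is reduced to a classical renewal/fluctuation-theory fact, and the references you identify (Spitzer for~\eqref{eq:mu-formula}, the discrete renewal theorem for~\ref{prop:walk-overshoot-ii}, Doney/Lai for the ladder-height moment transfer in~\ref{prop:walk-overshoot-iii}) are exactly those the paper uses. Part~\ref{prop:walk-overshoot-ii} and the deduction of~\ref{prop:walk-overshoot-iii} from~\ref{prop:walk-overshoot-i} are fine.

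The genuine gap is in part~\ref{prop:walk-overshoot-i}. You correctly identify that the difficulty is making the domination \emph{uniform} in $y$ rather than merely asymptotic, but your sketch does not close this gap: the bound $\bP_0(Z_y \ge k) \le \bP(Y_{N_y} \ge k)$ is true, but the law of $Y_{N_y}$ is \emph{not} dominated by $\psi$ for small $y$ (it equals the unbiased law of $Y$ when $y=0$, and size-biasing emerges only in the limit), so neither of your two proposed terms is separately controlled by $\pi$ or $\psi$ uniformly. The paper resolves this by citing Chang~\cite{chang}, who proves precisely the inequality $\bP_0(Z_y \ge k) \le \bP(\max(Z_\infty,L_\infty) \ge k)$ for all $y$ via an explicit coupling of the renewal process with its stationary two-sided version; the $\max$ of the two limiting variables is exactly what is needed to absorb the transient behaviour. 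You gesture towards such a coupling but do not carry it out; citing Chang (or reproducing his argument) is what is required here.

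Two minor corrections. First, the convergence of the Spitzer series $\sum n^{-1}(\tfrac12 - \bP(S_n>0))$ does not require a Berry--Esseen rate; it is part of Spitzer's theorem itself under $\Exp X^2<\infty$, and indeed no third-moment hypothesis is assumed. Second, in part~\ref{prop:walk-overshoot-iii} you write $\bE Z_\infty^{p-2}$ as a ``$\bE Y^p/((p-1)\mu)$-type expression'': in fact $\bE Z_\infty^{p-2} = \mu^{-1}\sum_k k^{p-2}\bP(Y\ge k)$ is finite iff $\bE Y^{p-1}<\infty$, not $\bE Y^p$, so the correct moment to invoke is $p-1$ (which you already have from Doney/Lai).
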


In particular, 
Proposition~\ref{prop:walk-overshoot}\ref{prop:walk-overshoot-ii} says that if $\Exp (X^2) < \infty$,
the overshoots are tight. This is in abrupt contrast to the case where $\Exp (X^2) = \infty$,
where, under appropriate conditions, the following consequence of the~\emph{Dynkin--Lamperti theorem} (see Proposition~\ref{prop:dynkin-lamperti})
says that the overshoot over level~$y$ lives on scale~$y$, asymptotically. Note that the result is valid for all $\alpha \in (0,2)$, although we will later use only the case $\alpha \in (1,2)$.

\begin{proposition}
\label{prop:dynkin-lamperti}
Suppose that~\eqref{ass:irreducible} holds, 
and that~\eqref{ass:stable-alpha} holds with $\alpha \in (0,2)$. Then,
for every $x \in \Z$ and every $u \geq 0$,
\[ \lim_{y \to \infty} \bP_x \left( \frac{S_{\rho_y} -y}{y} > u \right) = \int_u^\infty f_\alpha (v) \ud v , \]
where
\[ f_\alpha (v) := \frac{ \sin (\pi \alpha/2)}{\pi} \frac{1}{v^{\alpha/2} (1+v)}, ~\text{ for } v > 0.\]
\end{proposition}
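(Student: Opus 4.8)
The plan is to derive the statement as a direct application of the classical Dynkin--Lamperti arc-sine-type theorem for the residual lifetime of a renewal process, using the identification $S_{\rho_y} - y = Z_y$ from~\eqref{eq:overshoot-residual}, which turns the overshoot of the random walk into the residual lifetime of the strict ascending ladder-height renewal process with increment $Y$. The Dynkin--Lamperti theorem says that if the renewal increment $Y$ has a distribution whose tail is regularly varying with index $-\rho$ for $\rho \in (0,1)$, i.e. $\bP(Y > k)$ is regularly varying of index $-\rho$, then $Z_y / y$ converges in distribution, as $y \to \infty$, to a law with density $\frac{\sin(\pi\rho)}{\pi} \, v^{-\rho}(1+v)^{-1}$ on $(0,\infty)$ (the generalized arc-sine density). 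So the entire content of the proof reduces to two things: (a) identifying the correct tail index $\rho$ of the ladder height $Y$ in terms of $\alpha$, which should turn out to be $\rho = \alpha/2$, matching the exponent $v^{-\alpha/2}$ in $f_\alpha$; and (b) checking that the integer-valued (lattice) setting causes no difficulty, which it does not, since Dynkin--Lamperti holds for lattice renewal processes as well (the limit is continuous).

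The key step, therefore, is to establish the tail asymptotics of the ladder height~$Y$. Under~\eqref{ass:stable-alpha} with $\alpha \in (0,2)$, the walk $S$ is in the domain of normal attraction of a symmetric $\alpha$-stable law. First I would recall that for such a symmetric walk, the tail of $X$ itself satisfies $\bP(|X| > k) \sim C k^{-\alpha}$ as $k \to \infty$ for an explicit $C = C(\alpha,\beta) > 0$ (this is the standard Tauberian translation of~\eqref{eq:kesten-assumption}), and by symmetry $\bP(X > k) \sim (C/2) k^{-\alpha}$. The relevant fluctuation-theoretic input is then a theorem of Rogozin (or its treatment in Bingham--Goldie--Teugels, or Doney's lecture notes on fluctuation theory) which states that when $X$ is attracted to a stable law of index $\alpha$ with positivity parameter $\varrho := \bP(S_n > 0) \to \varrho$, the ascending ladder height $Y = S_{\lambda_1}$ has a regularly varying tail of index $-\alpha\varrho$: precisely $\bP(Y > k)$ is regularly varying with index $-\alpha\varrho$. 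For a symmetric walk the positivity parameter is $\varrho = 1/2$, so $\bP(Y > k)$ is regularly varying with index $-\alpha/2$, giving $\rho = \alpha/2 \in (0,1)$. Feeding this into Dynkin--Lamperti yields the limit density $\frac{\sin(\pi\alpha/2)}{\pi} v^{-\alpha/2}(1+v)^{-1} = f_\alpha(v)$, and integrating the tail gives the stated formula $\lim_{y\to\infty}\bP_x\big((S_{\rho_y}-y)/y > u\big) = \int_u^\infty f_\alpha(v)\,\ud v$.

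Finally, the starting point~$x$ needs to be handled: the statement is for a walk started at $S_0 = x$ rather than $0$. Here I would note that the overshoot of level $y$ by a walk started at $x$ equals (in distribution, and by spatial homogeneity exactly) the overshoot of level $y - x$ by a walk started at $0$, at least once $y > x$; since $y - x \to \infty$ as $y \to \infty$ with $x$ fixed, the limit is unchanged. (More carefully: $\bP_x(S_{\rho_y} - y > uy) = \bP_0(S_{\rho_{y-x}} - (y-x) > uy + (x - y) + (y - x)) $ — one writes $S_{\rho_y}$ under $\bP_x$ as $x + \tilde{S}_{\tilde\rho_{y-x}}$ where $\tilde S$ is a walk from $0$, so the overshoot is $\tilde S_{\tilde\rho_{y-x}} - (y-x)$, and then $(S_{\rho_y}-y)/y = (\tilde S_{\tilde\rho_{y-x}} - (y-x))/y$, and $(y-x)/y \to 1$, so the conclusion transfers directly.)

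The main obstacle is purely bibliographic rather than technical: one must locate and cite the precise form of Rogozin's ladder-height tail theorem valid for lattice walks attracted to a stable law (the versions in Bingham--Goldie--Teugels §8.9, or Doney's Saint-Flour notes, or the original Rogozin 1971 paper should suffice), and likewise a statement of Dynkin--Lamperti for lattice renewal sequences — the version in Bingham--Goldie--Teugels §8.6, or Dynkin's original, covers this. Once these two classical results are in hand, the identification $\rho = \alpha/2$ and the computation of the limiting density are immediate, and the $x$-shift is routine. If one prefers a self-contained route, the ladder-height tail index can alternatively be obtained from Kesten's estimates cited in Section~\ref{sec:kesten} together with the Sparre Andersen / Baxter formula relating $\bE[s^{\lambda_1} r^{Y}]$ to $\bP(S_n > 0)$, but this seems more laborious than simply invoking the two named theorems.
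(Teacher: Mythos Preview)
Your proposal is correct and coincides with one of the two routes the paper itself indicates: the paper does not give a self-contained proof but cites Kesten's Lemma~6 in~\cite{kesten1961a} as a direct source, and then remarks that an alternative is exactly what you outline, namely combining the Dynkin--Lamperti renewal theorem with Rogozin's result~\cite{rogozin} that under~\eqref{ass:stable-alpha} the ladder height~$Y$ lies in the domain of attraction of a positive $\alpha/2$-stable law (equivalently, $\bP(Y>k)$ is regularly varying of index $-\alpha/2$). Your handling of the starting point~$x$ and of the lattice setting is fine; the only cosmetic difference is that the paper phrases Rogozin's conclusion as ``domain of attraction of a positive $\alpha/2$-stable law'' rather than ``regularly varying tail of index $-\alpha/2$'', but these are equivalent for one-sided distributions.
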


Propositions~\ref{prop:walk-overshoot} and~\ref{prop:dynkin-lamperti}
are well known:  Kesten's Lemma~6~\cite[p.~255]{kesten1961a}
provides Proposition~\ref{prop:dynkin-lamperti} explicitly,
and gives Proposition~\ref{prop:walk-overshoot}\ref{prop:walk-overshoot-ii} under an additional symmetry assumption.
Another route to Proposition~\ref{prop:dynkin-lamperti}
is to combine the Dynkin--Lamperti renewal theorem~\cite[p.~361]{bgt}
with the result that under hypothesis~\eqref{ass:stable-alpha}, the ladder variable~$Y$ is in the domain
of attraction of a positive $\alpha/2$-stable law  (see Theorem~9 of Rogozin~\cite[p.~592]{rogozin});
a corresponding local limit theorem is given in~\cite{doney}. We give below a proof of  Proposition~\ref{prop:walk-overshoot}
without Kesten's additional hypothesis, but the proof involves little more than indicating appropriate results in the literature. 
First, we
give some intuition behind the
important
``loss of   moments'' phenomenon which the above results exhibit. For example,
 Proposition~\ref{prop:walk-overshoot} says that in order for the overshoot to have a uniformly bounded mean, we need to assume $\Exp ( |X|^3 ) < \infty$. 
 
Theorem~3.4 of Spitzer~\cite[p.~158]{spitzer} 
shows that the hypothesis~\eqref{ass:moments} (finite variance)
implies integrability of the ladder height, $1 \leq \bE Y < \infty$,
and gives the formula~\eqref{eq:mu-formula}. We ``lose moments'' in passing from the
walk to its ladder heights (this cannot be avoided, as explained in Remark~\ref{rem:doney} below). The fact that we ``lose another moment'' in passing from the ladder
heights to the (stationary) overshoots is due to the observation that, if  $Z$ is a random variable
distributed as  $\bP (Z = k) = \pi_k$ from~\eqref{eq:pi-k}, then 
 \begin{equation}
 \label{eq:Z-Y-moments}
 \bE ( Z^{q} ) = \sum_{k \in \N} k^{q} \pi_k 
= \frac{1}{\bE Y} \sum_{k \in \N} k^{q} \bP ( Y \geq k ),
\end{equation}
which is finite if and only if $\bE ( Y^{q+1}) < \infty$. This is a ``size-biasing'' effect; in the stationary renewal process associated with~$Y$, the intervals that straddle a particular value are more likely to be long.

\begin{remark}
\label{rem:doney}
Suppose that~\eqref{ass:irreducible} and~\eqref{ass:moments} hold. 
Since $Y \geq 1$ we have $L_n \geq n$ and hence $N_y \leq y+1$, a.s., and  
$L_{N_y} - y \leq L_{y+1} - y$.
Hence, for every $y \in \ZP$, it holds that 
\begin{equation}
    \label{eq:Y-gives-Z}
\bE (Z_y^q ) < \infty, \text{ whenever } \bE (Y^q ) < \infty.
\end{equation}
The purpose of this remark is to explain that we cannot claim that~\eqref{eq:Y-gives-Z} holds uniformly in~$y$. Let $Z$ denote a random variable
whose distribution is given by $\bP (Z = k) = \pi_k$ as given by~\eqref{eq:pi-k};
Proposition~\ref{prop:walk-overshoot}\ref{prop:walk-overshoot-ii}
shows that $Z_y$ converges to $Z$ in distribution as $y \to \infty$.
Suppose that $\sup_y \bE ( Z_y^q) < \infty$. Then uniform integrability
shows that, 
\begin{equation}\label{uniform-bd-integrability}
\text{
for every $q' \in (0,q)$, $\bE (Z^{q'}) = \lim_{y \to \infty} \bE (Z_y^{q'}) < \infty$. }
\end{equation}
In the special case where $X$ is symmetric,
Corollary~2 of Doney~\cite[p.~250]{doney-ladder} states that,
\begin{equation}\label{Doneys_result}
\text{for $p>2$,
$\Exp ( | X |^p ) < \infty$ if and only if $\bE ( Y^{p-1} ) < \infty$.} 
\end{equation}
In particular, if for some $q >1$ one has 
$\Exp ( |X|^{q+1}) < \infty$ but $\Exp (|X|^{q+(3/2)}) =\infty$, say,
then Doney's result \eqref{Doneys_result} says $\bE (Y^q ) < \infty$ but $\bE (Y^{q+(1/2)} ) = \infty$,
and so $\bE (Z^{q-(1/2)} ) = \infty$, by the
discussion around~\eqref{eq:Z-Y-moments}.
This is a contradiction with~\eqref{uniform-bd-integrability}.
Hence
we cannot, in general, insert a supremum over~$y$ into~\eqref{eq:Y-gives-Z}.
\end{remark}

\begin{proof}[Proof of Proposition~\ref{prop:walk-overshoot}]
Suppose, without loss of generality, that $S_0 = 0$, 
and consider $Z_y = S_{\rho_y} - y$ for some $y \in \ZP$.
As mentioned above, the fact that $\mu < \infty$ satisfies~\eqref{eq:mu-formula} is due to Spitzer~\cite{spitzer}. 
Part~\ref{prop:walk-overshoot-i} is an inequality
in the vein of Lorden~\cite{lorden},
obtained by Chang~\cite{chang}, using a coupling argument.

The most elegant (and probabilistic) argument for part~\ref{prop:walk-overshoot-ii}
proceeds from the
Markov chain representation~\eqref{eq:overshoot-chain}.
Indeed,
$Z_0 = 1$ and $Z_1 =Y_{N_1} = Y_1$. If we set
$\tau := \inf \{ n \in \N : Z_n = 1\}$, then $\tau = Y_1$, a.s., and 
the usual excursion-occupation construction shows that an invariant measure $(\mu(y), y \in \N)$ for the Markov chain is given by \[ 
\mu(y) = \frac{1}{\bE \tau} \bE \sum_{n = 1}^{\tau} \1{Z_n = y} = \frac{1}{\bE Y_1} \bE \sum_{n=1}^{Y_1} \1 { Z_n = y } 
= \frac{1}{\bE Y} \bE  \1 { Y \geq y } ,
\]
which is exactly $\pi$ given by~\eqref{eq:pi-k}. 
As remarked after~\eqref{eq:overshoot-chain}, the Markov chain is irreducible and
aperiodic under the hypotheses of the proposition, and 
so the convergence in~\eqref{eq:pi-limit}
follows from the Markov chain convergence theorem.
Part~\ref{prop:walk-overshoot-ii} 
can be found as Theorem 6.10.3 of~\cite[pp.~103--4]{gut-srw}, and
may also be derived from the  
classical 
renewal theorem for aperiodic lattice random variables~\cite[p.~363]{feller2}.  

Part~\ref{prop:walk-overshoot-iii} follows from  part~\ref{prop:walk-overshoot-i}, and indeed from Theorem~3 of Lorden~\cite{lorden} (see also Theorem 3.1 of~\cite{janson}),
once one knows that $\bE ( Y^{p-1} ) < \infty$ whenever $\bE ( |X|^p ) < \infty$.
This fact (the $p>2$ analogue of Spitzer's result for $p=2$ that we already used) 
 is provided by results of Doney~\cite{doney} and Lai~\cite{lai}.
\end{proof}

\subsection{Estimates from Kesten on hitting before exit}
\label{sec:kesten}

In this section we present some estimates on hitting and exit of
$\Z$-valued random walks, derived more-or-less directly
from fine results of Kesten~\cite{kesten1961a,kesten1961b}.
To state the results,
for $t \in \Z$ and $A \subset \Z$, define
\begin{equation}
\label{eq:T-eta-S}
  T_t := \inf \{ n \in \ZP: S_n = t \}, \text{ and } 
   \eta_A := \inf \{ n \in \ZP: S_n \notin A \},
\end{equation}
respectively the first hitting time of~$t$ and the first exit time from~$A$
for the random walk~$S$. 
First, we describe informally the results that we will use.
Throughout this discussion, we assume that~\eqref{ass:irreducible} holds and that $\Exp X =0$.

\begin{itemize}
\item
An easy, but important, 
consequence of  recurrence and irreducibility
is a \emph{local hitting} estimate saying that
there is high probability of visiting a nearby site before going far away:
see Lemma~\ref{lem:recurrence-hitting} below.
\item Kesten provides general \emph{gambler's ruin} estimates 
on the probability of exiting a large interval on one side rather than the other: see Lemma~\ref{lem:kesten-gambler} below. 
\item The local hitting and gambler's ruin estimates show no essential distinction between the
finite- and infinite-variance cases. Where the distinction arises is in what the walk does when it exits an interval, i.e., the behaviour of \emph{overshoots}, as we have seen with the contrast between
Propositions~\ref{prop:walk-overshoot} (tight overshoots) and~\ref{prop:dynkin-lamperti} (large-scale overshoots). 
\item Kesten combines ingeniously the above elements to obtain precise asymptotics for the probability of hitting a particular point before exit from a large interval. Roughly speaking, in the finite-variance case, the tight overshoots and local hitting estimates show that the gambler's ruin probabilities capture the essential behaviour, while in the infinite-variance case there may be many overshoots of the target point before it is successfully hit. Lemmas~\ref{lem:kesten-visit-before-exit} and~\ref{lem:Bounds_on_q} below present the main estimates we will need of this type.
\end{itemize}

We now give precise statements of the results described loosely above. 
We start with the following local hitting property, which is a consequence of recurrence and irreducibility: see equation{~(2.5) of~\cite[p.~248]{kesten1961a}}.

\begin{lemma}
    \label{lem:recurrence-hitting}
Suppose that~\eqref{ass:irreducible} holds and $\Exp X =0$. Then
$\lim_{N \to \infty} \bP_0 ( T_k < \eta_{[-N,N]} ) = 1$
    for every fixed $k\in \Z$.
\end{lemma}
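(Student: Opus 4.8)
The plan is to deduce the statement from recurrence and irreducibility via a Borel--Cantelli / regeneration argument on excursions away from the target point $k$. First I would fix $k \in \Z$ and observe that, by~\eqref{ass:irreducible} and the Chung--Fuchs recurrence recalled around~\eqref{eq:limsup}, the walk started from $0$ hits $k$ almost surely, i.e. $\bP_0(T_k < \infty) = 1$, and more precisely by irreducibility there is $n_0 \in \N$ and $\epsilon_0 > 0$ with $\bP_x(T_k \le n_0) \ge \epsilon_0$ for every $x$ in any fixed finite window around $k$; a short compactness argument over finitely many starting points gives a uniform such bound. The key point is then that the events $\{T_k < \eta_{[-N,N]}\}$ are increasing in $N$, with union $\{T_k < \infty\}$ up to a null set, so $\bP_0(T_k < \eta_{[-N,N]}) \to \bP_0(T_k < \infty) = 1$ by monotone convergence of probabilities.

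Concretely, I would argue as follows. Set $A_N := \{T_k < \eta_{[-N,N]}\}$. Since $[-N,N] \subseteq [-(N+1),N+1]$, exiting the larger interval takes at least as long as exiting the smaller one, so $\eta_{[-N,N]} \le \eta_{[-N-1,N+1]}$ pointwise, whence $A_N \subseteq A_{N+1}$: the sequence $(A_N)$ is nondecreasing. Next, $\bigcup_{N \ge 1} A_N \supseteq \{T_k < \infty\} \cap \{\eta_{[-N,N]} \to \infty\}$. On the event $\{T_k < \infty\}$, the walk reaches $k$ at some finite (random) time, and since $[-N,N] \uparrow \Z$ and the walk path up to any fixed finite time is bounded, we have $\eta_{[-N,N]} \to \infty$ almost surely; hence $\bigcup_N A_N \supseteq \{T_k < \infty\}$ up to a $\bP_0$-null set. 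Continuity of probability from below then gives $\lim_{N\to\infty} \bP_0(A_N) = \bP_0\bigl(\bigcup_N A_N\bigr) \ge \bP_0(T_k < \infty) = 1$, which is the claim.

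The only genuine input is $\bP_0(T_k < \infty) = 1$. For this I would invoke that under $\Exp X = 0$ and $\Exp|X| < \infty$ the walk is recurrent (Chung--Fuchs, as cited at~\cite[p.~615]{feller2}), so $0$ is visited infinitely often; combined with~\eqref{ass:irreducible}, which provides $n_1$ with $\bP_0(S_{n_1} = k) > 0$, a standard argument (at each return to $0$ there is a fixed positive chance of reaching $k$ within $n_1$ steps, and these trials are independent by the strong Markov property) forces $T_k < \infty$ a.s. If one does not wish to assume $\Exp|X| < \infty$ here, the same conclusion in fact follows directly from~\eqref{ass:irreducible} alone together with $\limsup_n |S_n| = \infty$ noted after~\eqref{eq:cluster expansion}, since an irreducible random walk on $\Z$ that oscillates must hit every point; I would use whichever formulation matches the hypotheses in force. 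There is no real obstacle here — the statement is elementary and the entire content is the monotone-convergence packaging of recurrence plus irreducibility; the mild care needed is only in writing the inclusion $\bigcup_N A_N \supseteq \{T_k < \infty\}$ cleanly, which is where I would be most careful.
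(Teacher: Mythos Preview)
Your main argument is correct and is essentially how one would prove this from scratch: the events $A_N = \{T_k < \eta_{[-N,N]}\}$ are increasing in $N$, their union contains $\{T_k < \infty\}$ (since on the latter event the finite path $S_0,\ldots,S_{T_k}$ is bounded, hence contained in $[-N,N]$ for all large $N$), and $\bP_0(T_k<\infty)=1$ by Chung--Fuchs recurrence combined with irreducibility. The paper itself gives no proof of this lemma at all; it simply cites equation~(2.4) of Kesten~\cite[p.~247]{kesten1961a}. So your monotone-convergence packaging is a genuine, self-contained proof where the paper only points to the literature.

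Two minor comments. First, the opening sentences about a ``Borel--Cantelli / regeneration argument'' and a ``compactness argument over finitely many starting points'' are never used and can be deleted; the clean second paragraph is the whole proof. Second, your alternative route (``if one does not wish to assume $\Exp|X|<\infty$'') is not valid as stated: irreducibility together with $\limsup_n |S_n| = \infty$ does \emph{not} force $T_k<\infty$ a.s.\ --- a drifted walk satisfies both but fails to hit points on the wrong side with positive probability. Since the hypothesis $\Exp X = 0$ already entails $\Exp|X|<\infty$, you should simply drop this aside and rely on Chung--Fuchs.
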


Next we present Kesten's general gambler's ruin estimates.
The finite-variance part, Lemma~\ref{lem:kesten-gambler}\ref{lem:kesten-gambler-i},
 is contained in Theorem~2 of~\cite[p.~256]{kesten1961a}, while the infinite-variance part, Lemma~\ref{lem:kesten-gambler}\ref{lem:kesten-gambler-ii}, is contained in Corollary~1 of~\cite[p.~273]{kesten1961b}. We remark that the strength of part~\ref{lem:kesten-gambler-i} is that no more than finite second moments is assumed, but asymptotic lower and upper bounds match; 
 compare e.g.~Theorem 5.1.7 of~\cite[p.~127]{llbook}, which does not provide matching bounds, or Theorem~5 of~\cite{lotov}, which requires the hypothesis $\Exp ( |X|^3 ) < \infty$.

 \begin{lemma}[Kesten 1961~\cite{kesten1961a,kesten1961b}]
 \label{lem:kesten-gambler}
 Suppose that~\eqref{ass:irreducible} holds.
 \begin{thmenumi}[label=(\roman*)]
\item\label{lem:kesten-gambler-i} Suppose that $\Exp ( X^2 ) < \infty$ and $\Exp X =0$. Then,
for every $c \in \RP$,
 \begin{equation}
     \label{eq:right-exit-kesten-2}
     \lim_{N \to \infty} \bP_0 \bigl( S_{\eta_{[-cN, N]}} > N \bigr) 
     = \frac{c}{1+c} .
 \end{equation}
\item\label{lem:kesten-gambler-ii} Suppose that~\eqref{ass:stable-alpha} holds with $\alpha \in (1,2)$.
Then, for every $c \in \RP$,
 \begin{equation}
     \label{eq:right-exit-kesten}
     \lim_{N \to \infty} \bP_0 \bigl( S_{\eta_{[-cN, N]}} > N \bigr) 
     = \frac{\Gamma(\alpha)}{\Gamma (\alpha/2)^2} \int_{(1+c)^{-1}}^1 u^{\frac{\alpha}{2}-1} (1-u)^{\frac{\alpha}{2}-1} \ud u .
 \end{equation}
  \end{thmenumi}
\end{lemma}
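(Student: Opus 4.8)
The plan is to obtain both parts directly from Kesten's fluctuation theory~\cite{kesten1961a,kesten1961b}; the only genuine labour is to translate his conventions into ours and to record why the stated limits are the natural ones.

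For part~\ref{lem:kesten-gambler-i} I would first isolate the heuristic, since it shapes the argument. Write $\eta := \eta_{[-cN,N]}$ and $p_N := \bP_0(S_{\eta} > N)$. Since $\Exp X = 0$, the walk $(S_n)$ is a martingale, and optional stopping at $\eta$ — justified because $\bE_0[\eta] = O(N^2)$ (a routine consequence of $\Exp(X^2)<\infty$, via the martingale $S_n^2 - \sigma^2 n$) together with a uniform-integrability check on $(S_{\eta\wedge n})_n$ — gives $\bE_0[S_{\eta}] = 0$. Splitting over which endpoint is crossed and writing $S_{\eta}-N\geq 1$ for the overshoot on $\{S_{\eta}>N\}$ and $-cN-S_{\eta}\geq 1$ for the overshoot on $\{S_{\eta}<-cN\}$, this identity reads $p_N N - (1-p_N)cN + (\text{mean overshoot terms}) = 0$. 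Under $\Exp(X^2)<\infty$ the overshoots over levels of size of order $N$ are tight uniformly in $N$ by Proposition~\ref{prop:walk-overshoot}, so (after a truncation argument that uses $\Exp(X^2)<\infty$ to bound the large-overshoot contribution) the mean overshoot terms are $o(N)$, and solving for $p_N$ gives $p_N\to c/(1+c)$. To turn this into a clean proof I would simply cite Theorem~2 of~\cite[p.~256]{kesten1961a}, after specialising his statement — which concerns exit from an interval whose endpoints tend to infinity — to $[-cN,N]$ and checking that~\eqref{ass:irreducible} together with $\Exp X=0$, $\Exp(X^2)<\infty$ places us inside his framework.

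For part~\ref{lem:kesten-gambler-ii} the structure is the same: invoke Corollary~1 of~\cite[p.~273]{kesten1961b}, again converting Kesten's normalisation to ours, and record the probabilistic interpretation. Rescaling space by $N$, the walk started at $0$ exits the interval $(-c,1)$; under~\eqref{ass:stable-alpha} the rescaled path converges to a symmetric $\alpha$-stable Lévy process $\zeta$, and, unlike in the finite-variance case, the overshoot at exit is now of order $N$ (cf.\ Proposition~\ref{prop:dynkin-lamperti}), so the limit genuinely feels the jump that carries $\zeta$ out of $(-c,1)$. The position of $\zeta$ at first exit from an interval has the classical explicit density of Blumenthal, Getoor and Ray, and a short computation — affinely mapping $(-c,1)$ to $(-1,1)$ — identifies the probability of exiting above $1$ from the origin with $\bP(W \geq (1+c)^{-1})$, where $W$ has the $\mathrm{Beta}(\alpha/2,\alpha/2)$ distribution; since $\Gamma(\alpha)/\Gamma(\alpha/2)^2 = 1/\mathrm{B}(\alpha/2,\alpha/2)$, this is exactly the right-hand side of~\eqref{eq:right-exit-kesten}. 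As sanity checks, $c=1$ gives $1/2$ by symmetry of the $\mathrm{Beta}(\alpha/2,\alpha/2)$ law about $1/2$, while letting $\alpha\uparrow 2$ degenerates it to $\mathrm{Uniform}(0,1)$, recovering $c/(1+c)$ and matching part~\ref{lem:kesten-gambler-i}.

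I expect the principal obstacle to be bookkeeping rather than conceptual. Kesten works under slightly different standing assumptions (on periodicity, and on the precise tail normalisation in the stable case) and with a differently parametrised family of intervals, so the real content is to verify that~\eqref{ass:irreducible}, \eqref{ass:moments}, \eqref{ass:stable-alpha} fall within his setting and that no edge case — behaviour exactly at the endpoints, or very small or very large $c$ — is lost in the translation. If one instead insisted on a self-contained proof of part~\ref{lem:kesten-gambler-ii}, the hard step would be to show that the discontinuous ``signed exit location'' functional passes to the weak limit, i.e.\ that the set of $\zeta$-paths exiting $(-c,1)$ in a degenerate way is $\zeta$-null; this holds because the stable process leaves intervals by a jump, but it needs the appropriate continuous-mapping lemma for Lévy processes alongside the Blumenthal--Getoor--Ray formula.
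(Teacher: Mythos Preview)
Your proposal is correct and takes essentially the same approach as the paper: both cite Theorem~2 of~\cite[p.~256]{kesten1961a} for part~\ref{lem:kesten-gambler-i} and Corollary~1 of~\cite[p.~273]{kesten1961b} for part~\ref{lem:kesten-gambler-ii}, with the paper treating the lemma as a direct quotation of Kesten and relegating the translation of conventions (Kesten states the left-exit probability, so one uses the beta identity $\int_0^1 u^{\alpha/2-1}(1-u)^{\alpha/2-1}\,\ud u = \Gamma(\alpha/2)^2/\Gamma(\alpha)$ to pass to the complement) to a remark. Your additional heuristics --- the optional-stopping argument for~\ref{lem:kesten-gambler-i} and the Blumenthal--Getoor--Ray interpretation for~\ref{lem:kesten-gambler-ii} --- go beyond what the paper records, and your sanity checks ($c=1$ and $\alpha\uparrow 2$) match exactly the content of the paper's remarks following the lemma.
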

\begin{remarks}
\label{rems:kesten-gambler}
\begin{myenumi}[label=(\roman*)]
\item\label{rems:kesten-gambler-i}
A change of variable followed by 
the symmetric beta-integral formula~\cite[p.~258]{as} 
shows that, for every $\alpha >0$,
\begin{equation}
    \label{eq:beta}
   2 \int_{1/2}^1{u^{\frac{\alpha}{2}-1} (1-u)^{\frac{\alpha}{2}-1} \ud u}
 =    \int_0^1{u^{\frac{\alpha}{2}-1} (1-u)^{\frac{\alpha}{2}-1} \ud u}
    = \frac{\Gamma (\alpha/2)^2}{\Gamma(\alpha)} .
\end{equation}
Consequently, for exit from a symmetric interval ($c=1$),
both~\eqref{eq:right-exit-kesten-2} and~\eqref{eq:right-exit-kesten}
yield the asymptotically-fair ruin estimate $\lim_{N \to \infty}\bP_0 \bigl( S_{\eta_{[-N, N]}} > N \bigr) = 1/2$.
\item\label{rems:kesten-gambler-ii}
Corollary~1 in Kesten~\cite[p.~273]{kesten1961b} is stated slightly differently from \eqref{eq:right-exit-kesten}, in terms of $\lim_{N \to \infty}\bP_0 \bigl( S_{\eta_{[-cN, N]}} <-cN \bigr)$. 
However, one  easily deduces~\eqref{eq:right-exit-kesten} using~\eqref{eq:beta}, since
\begin{align*}
&\lim_{N \to \infty} 
\bP_0 \bigl( S_{\eta_{[-cN, N]}} > N \bigr)
   =  1 - \lim_{N \to \infty} \bP_0 \bigl( S_{\eta_{[-cN, N]}} <-cN \bigr)\\
 &=  \frac{\Gamma(\alpha)}{\Gamma (\alpha/2)^2} \left[ \int_0^1 {u^{\frac{\alpha}{2}-1} (1-u)^{\frac{\alpha}{2}-1} \ud u}- \int_0^{(1+c)^{-1}}{u^{\frac{\alpha}{2}-1} (1-u)^{\frac{\alpha}{2}-1} \ud u} \right]\\
 &=  \frac{\Gamma(\alpha)}{\Gamma (\alpha/2)^2} \int_{(1+c)^{-1}}^1 u^{\frac{\alpha}{2}-1} (1-u)^{\frac{\alpha}{2}-1} \ud u.
\end{align*}
\end{myenumi}
\end{remarks}

We turn to the most delicate results, which are estimates for the probability that a particular site in an interval is visited before {exiting} the interval. 
Borrowing notations from {Kesten~\cite[p.~275]{kesten1961b}}, let us  define for $N \in \ZP$, $c >0$, 
and $k \in \{0,1,\ldots, N \}$,
\begin{equation}
\label{Eq: def_q_N}
q_{\alpha, N} \Bigl( \frac{k}{N}; c \Bigr) :=  \bP_k \left(  T_{0} < \eta_{[ -cN, N ]} \right), 
\end{equation} 
where $\alpha =2$ if $\Exp ( X^2 ) < \infty$ and $\alpha \in (1,2)$ if~\eqref{ass:stable-alpha} is satisfied. Also  define $q_{\alpha,N}( y ; c)$ over all $y \in [0,1]$ by linear interpolation, i.e.,
 \[ q_{\alpha, N} ( y ; c ) := (  k+1-N y ) q_{\alpha, N}\Bigl( \frac{k}{N}; c \Bigr) +  (Ny-k) q_{\alpha, N}\Bigl( \frac{k+1}{N}; c \Bigr) , \text{ if } \frac{k}{N} < y < \frac{k+1}{N}. \]
For $c' \geq c >0$, we have $\eta_{[-cN,N]} \leq \eta_{[-c'N,N]}$, a.s.; this shows  the following monotonicity property 
\begin{equation}
\label{eq:q-monotone}
q_{\alpha, N} ( y ; c ) \leq q_{\alpha, N} ( y ; c' ), \text{ whenever } 0 < c \leq c'.
\end{equation}

The main result of this subsection is the following.
The result is essentially due to Kesten~\cite{kesten1961a,kesten1961b}, but part~\ref{lem:kesten-visit-before-exit-i} is not given explicitly by Kesten, so we give a proof later in this subsection.

\begin{lemma}[Kesten 1961~\cite{kesten1961a,kesten1961b}]
 \label{lem:kesten-visit-before-exit}
Suppose that~\eqref{ass:irreducible} holds. Fix  $0 \le y <1$ and $c>0$.  
 \begin{thmenumi}[label=(\roman*)]
\item\label{lem:kesten-visit-before-exit-i} Suppose that $\Exp ( X^2 ) < \infty$ and $\Exp X =0$. 
 Then there exists the limit
 \begin{equation}
 \label{eq:q_limit-2}
 \lim_{N \to \infty}q_{2, N}(y; c) = q_{2} (y; c)  :=     1-y.
 \end{equation}
\item\label{lem:kesten-visit-before-exit-ii} Suppose that~\eqref{ass:stable-alpha} holds with $\alpha \in (1,2)$. Then there exists the limit
 \begin{align}
 \label{eq:q_limit-alpha}
 \lim_{N \to \infty}q_{\alpha, N}(y; c) & = q_{\alpha} (y; c), \end{align}
 where
  \begin{align}
 \label{eq:q_limit_integral}
 {q_{\alpha} (y ; c)} & :=
 \left(\alpha -1\right) c^{1-\frac{\alpha}{2}}\left(1+c\right)^{\alpha -1 }\left(y+c  \right)^{\frac{\alpha}{2}}y^{\alpha -1}
  \int_y^1  \left(y+cv\right)^ {-\alpha}  \left(1-v\right)^{\frac{\alpha}{2} -1}\ud v.
 \end{align}
  \end{thmenumi}
\end{lemma}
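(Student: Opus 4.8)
The plan is to prove the two parts by quite different routes: part~\ref{lem:kesten-visit-before-exit-i} admits a short self-contained argument built from the ingredients already assembled (local hitting, tight overshoots, and Kesten's gambler's ruin), whereas part~\ref{lem:kesten-visit-before-exit-ii} is, in essence, a repackaging of Kesten~\cite{kesten1961b}, whose delicate analysis I do not expect to circumvent. In both cases the statement for non-integer $yN$ is obtained from the lattice-point statement by the monotonicity~\eqref{eq:q-monotone} together with the equicontinuity established in Appendix~\ref{sec:appendix-equicontinuity}, so I focus below on $k := \lfloor yN \rfloor$. The case $y=0$ is trivial since $q_{\alpha,N}(0;c)=1$ for all $N$.

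For the \emph{upper bound} in part~\ref{lem:kesten-visit-before-exit-i}, observe that if $T_0 < \eta_{[-cN,N]}$ then at time $T_0$ the walk sits at $0\le N$ and has never exceeded $N$ before, so it reaches $(-\infty,0]$ strictly before first exceeding $N$; applying Lemma~\ref{lem:kesten-gambler}\ref{lem:kesten-gambler-i} (with a routine sandwiching of side-lengths using monotonicity of ruin probabilities) gives $\bP_k\bigl(\text{reach }(-\infty,0]\text{ before }(N,\infty)\bigr)\to (N-k)/N\to 1-y$, hence $\limsup_N q_{2,N}(y;c)\le 1-y$. For the matching \emph{lower bound}, fix large constants $M' > M$. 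On the event that the walk first drops weakly below level $M$ before exceeding $N$ (probability $\to (N-k)/(N-M)\to 1-y$ by Lemma~\ref{lem:kesten-gambler}\ref{lem:kesten-gambler-i} again) \emph{and} the overshoot of level $M$ is at most $M'$ (probability $\ge 1-\varepsilon_{M'}$, uniformly in $N$, by Proposition~\ref{prop:walk-overshoot}\ref{prop:walk-overshoot-i}), the walk finds itself at a fixed site $j\in[M-M',M]$ still inside $[-cN,N]$; from there Lemma~\ref{lem:recurrence-hitting} and the strong Markov property give $\inf_{|j|\le M'}\bP_j(T_0<\eta_{[-cN,N]})\to 1$ as $N\to\infty$. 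Combining, $\liminf_N q_{2,N}(y;c)\ge 1-y-\varepsilon_{M'}$, and letting $M'\to\infty$ yields~\eqref{eq:q_limit-2}. Note that the left endpoint $-cN$ never enters, which is why $q_2$ does not depend on $c$; the stray event in which the overshoot below $M$ exceeds $cN+M$ is negligible, again by tightness.

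For part~\ref{lem:kesten-visit-before-exit-ii} the picture changes: by Proposition~\ref{prop:dynkin-lamperti} overshoots now live on the scale of the level, so the walk may cross $0$ many times, at displacements of order~$N$, before a successful hit, and the endpoint $-cN$ becomes relevant. The natural strategy is a renewal-type recursion for any subsequential limit $g$ of $q_{\alpha,N}(\cdot;c)$ (these lie in $[0,1]$, and pointwise limits upgrade to uniform ones on $[0,1]$ by Appendix~\ref{sec:appendix-equicontinuity}): starting from $k=\lfloor yN\rfloor$, use Lemma~\ref{lem:kesten-gambler}\ref{lem:kesten-gambler-ii} for the probability of reaching $(-\infty,0]$ before exceeding $N$; condition, via Proposition~\ref{prop:dynkin-lamperti} applied to the reflected walk as in Remark~\ref{rem:left-overshoots}, on the rescaled overshoot $V$ of level $0$; discard paths with $yV>c$ (a left-exit failure); and on the survivors restart from $\approx -yNV$, re-expressing the continuation probability, using the symmetry $X\eqd -X$ in~\eqref{ass:stable-alpha}, as a value of the same limiting function for the interval $[-N,cN]$, i.e. as $g(yV/c;1/c)$. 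Passing to the limit produces a (homogeneous) integral equation for $g$; one then verifies that the explicit $q_\alpha(\cdot;c)$ of~\eqref{eq:q_limit_integral} solves it (a change of variable; one can also sanity-check $q_\alpha(0^+;c)=1$, $q_\alpha(1^-;c)=0$) and that the solution with these boundary values is unique, whence the full limit exists and equals~\eqref{eq:q_limit_integral}. In practice I would quote Kesten~\cite{kesten1961b}, who carries out exactly this programme, reducing the remaining work to (a) translating his parametrization of the limit into the closed form~\eqref{eq:q_limit_integral}, and (b) deducing the interpolated statement from the lattice-point one.

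The main obstacle is unambiguously part~\ref{lem:kesten-visit-before-exit-ii}, and specifically the non-tightness of the overshoots: controlling the random, unbounded number of order-$N$ excursions across the target, showing that conditioning on non-exit does not distort the limiting overshoot law at each stage, and — the real crux — establishing uniqueness of the solution of the resulting integral equation so that the explicit formula~\eqref{eq:q_limit_integral} can be identified with the limit. This is precisely the hard core of~\cite{kesten1961b}, and I would not attempt to improve on it; by contrast, once Proposition~\ref{prop:walk-overshoot}, Lemma~\ref{lem:recurrence-hitting} and Lemma~\ref{lem:kesten-gambler} are in hand, part~\ref{lem:kesten-visit-before-exit-i} is routine, the only genuinely careful points being the boundary bookkeeping near $0$ and the uniform-in-$N$ tightness of the overshoot used in the lower bound.
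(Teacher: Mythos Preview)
Your proposal is correct and takes essentially the same approach as the paper: for part~\ref{lem:kesten-visit-before-exit-ii} both simply cite Kesten~\cite{kesten1961b}, and for part~\ref{lem:kesten-visit-before-exit-i} both combine Lemma~\ref{lem:kesten-gambler}\ref{lem:kesten-gambler-i}, Proposition~\ref{prop:walk-overshoot}, and Lemma~\ref{lem:recurrence-hitting} via the strong Markov property at the first passage below a fixed level. The only cosmetic differences are that the paper uses the level~$0$ where you use a general~$M$, and the paper dispatches the upper bound with ``a similar argument in the other direction'' where you spell it out explicitly.
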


\begin{remark}
\label{rem:q-alpha-def}
Formula~\eqref{eq:q_limit_integral} defines $q_{\alpha} (y ; c)$ when $y >0$,
and when $y=0$ the definition is to be understood as the limit $q_\alpha (0; c) := \lim_{y \to 0} q_\alpha (y ;c) =1$ (as can be verified by calculus).
Moreover, there is continuity as $\alpha \uparrow 2$ in the sense that
$\lim_{\alpha \to 2} q_\alpha (y,c) = 1-y$ to match with~\eqref{eq:q_limit-2}. See Lemma~\ref{lem:Bounds_on_q} below for proofs of these properties.
\end{remark}

We defer the proof of Lemma~\ref{lem:kesten-visit-before-exit} until the end of this section. First, we need some technical results on the equicontinuity of the $q_{\alpha,N} (y;c)$ appearing in Lemma~\ref{lem:kesten-visit-before-exit},
as well as the corresponding quantities that appear in Lemma~\ref{lem:kesten-gambler}.
In the latter case, we need a little more notation.
Similarly to $q_{\alpha, N} ( y ; c )$, we can define for every $y \in [0,1]$, the exit 
 probabilities 
\begin{equation*}
\label{Eq:def_p_tilde}
p_{\alpha, N} \Bigl( \frac{k}{N} \Bigr) := \bP_k \bigl( S_{\eta_{[ 0,N]}} < 0  \bigr), \text { for } k \in \{ 1,2,\ldots, N \}.
\end{equation*}
For general $\frac{k}{N} < y < \frac{k+1}{N}$, we define $p_{\alpha,N} (y)$ via linear interpolation, similarly to $q_{\alpha, N} ( y ; c )$.
The equicontinuity results that we need are as follows. 

\begin{lemma}[Kesten 1961~\cite{kesten1961b}]
\label{lem:kesten-equicontinuity}
 Suppose that~\eqref{ass:irreducible} holds. For $1 < \alpha \leq 2$, suppose in addition that (if $\alpha = 2$) $\Exp ( X^2 ) < \infty$ and $\Exp X =0$, or (if $1<\alpha<2$) that~\eqref{ass:stable-alpha} holds.  Fix  $0 \le \lambda <1$ and $c>0$. The following hold.
\begin{thmenumi}[label=(\roman*)]
\item
\label{lem:kesten-equicontinuity-i}
The family of functions $\left(p_{\alpha, N} ( y )\right)_{N \in \N}$  of $y \in [0,\lambda]$ is    uniformly equicontinuous.
\item\label{lem:kesten-equicontinuity-ii} The family of functions $\left(q_{\alpha, N} ( y ; c )\right)_{N \in \N}$  of $y \in [0,\lambda]$ is    uniformly equicontinuous.
\end{thmenumi}
\end{lemma}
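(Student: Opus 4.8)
The plan is to establish equicontinuity by exploiting a short-range coupling argument that shows the functions $y \mapsto q_{\alpha,N}(y;c)$ (and similarly $p_{\alpha,N}(y)$) cannot change too quickly as $y$ varies within any compact subinterval $[0,\lambda]$ of $[0,1)$, with a modulus of continuity that is \emph{uniform} in $N$. First I would reduce the statement to a discrete estimate: since $q_{\alpha,N}$ is defined by linear interpolation between the values $q_{\alpha,N}(k/N;c) = \bP_k(T_0 < \eta_{[-cN,N]})$, it suffices to bound the increments $|q_{\alpha,N}((k+j)/N;c) - q_{\alpha,N}(k/N;c)|$ for $0 \le k, k+j \le \lambda N$ in terms of $j/N$ (plus a term tending to $0$ as $N \to \infty$). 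The natural way to compare the walk started at $k+j$ with the walk started at $k$ is to let the first walk run until it hits level $k$ (or exits the interval first); by Lemma~\ref{lem:recurrence-hitting} applied after rescaling, a walk started at distance $j$ from a target, inside an interval of width of order $N$, hits the target before leaving the interval with probability at least $1 - \varepsilon(j,N)$, and one needs to control how this error behaves. The key technical input is that, because we are working on the macroscopic scale, the relevant quantity is really $\bP_0(T_{-j} \wedge T_{+\infty\text{-ish}})$; the gambler's ruin estimates of Lemma~\ref{lem:kesten-gambler} and the local hitting estimate of Lemma~\ref{lem:recurrence-hitting}, together with the overshoot control from Propositions~\ref{prop:walk-overshoot} and~\ref{prop:dynkin-lamperti}, give that a walk from $k+j$ reaches level $k$ before exiting $[-cN,N]$ with probability $1 - O(j/N) - o(1)$ uniformly over $k \le \lambda N$, and on that event the strong Markov property at the hitting time of $k$ couples the two processes up to an overshoot which, being tight (finite-variance case) or of order $o(N)$ in probability (stable case), contributes only a vanishing correction to the hitting-before-exit event after one further rescaling. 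Combining these gives $|q_{\alpha,N}((k+j)/N;c) - q_{\alpha,N}(k/N;c)| \le C j/N + o(1)$, which is exactly uniform equicontinuity.

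More concretely, I would argue as follows for part~\ref{lem:kesten-equicontinuity-ii}. Fix $\lambda < 1$ and $c > 0$, and let $\delta > 0$; choose integers $k < k' \le \lambda N$ with $(k'-k)/N < \delta$. Writing $S$ under $\bP_{k'}$, let $\sigma := \inf\{n : S_n \le k\}$ be the first time the walk drops to level $k$ or below. On $\{\sigma < \eta_{[-cN,N]}\}$ the walk is at position $S_\sigma \in [k - W, k]$ where $W = k - S_\sigma \ge 0$ is a left-overshoot of level $k$; then by the strong Markov property and the monotonicity~\eqref{eq:q-monotone} together with obvious spatial comparisons, the events $\{T_0 < \eta_{[-cN,N]}\}$ for the two starting points differ only through (a) the probability that $\sigma \ge \eta_{[-cN,N]}$ (i.e., the walk from $k'$ exits before ever reaching level $k$), and (b) the discrepancy between starting the fresh walk at $S_\sigma$ rather than exactly at $k$, which is governed by $|q_{\alpha,N}((k-W)/N ; \cdot) - q_{\alpha,N}(k/N;\cdot)|$ — but here $W$ lives on scale $1$ (finite variance) or is $o(N)$ in probability (stable case by Proposition~\ref{prop:dynkin-lamperti} applied to $-X$, cf.\ Remark~\ref{rem:left-overshoots}), so after dividing by $N$ this is negligible, and one can iterate/bootstrap this negligibility. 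For (a), the probability that a walk from $k'$, with $k' - k \le \delta N$ and $k' \le \lambda N$, exits $[-cN,N]$ above $N$ before descending to $k$ is, by Lemma~\ref{lem:kesten-gambler}\ref{lem:kesten-gambler-ii} (rescaling the interval $[k, N]$ of macroscopic width $1 - \lambda \ge 1 - \lambda > 0$ against the starting gap $\delta$), at most a quantity $g(\delta) + o(1)$ with $g(\delta) \to 0$ as $\delta \to 0$, uniformly in admissible $k, k'$. Putting the pieces together yields $q_{\alpha,N}(k/N;c) - q_{\alpha,N}(k'/N;c) \le g(\delta) + o(1)$ (and a symmetric lower bound by running the comparison the other way, or by a parallel argument bounding the walk from $k$ hitting $k'$), and since linear interpolation cannot increase the modulus of continuity, this gives equicontinuity of $(q_{\alpha,N})_N$ on $[0,\lambda]$.

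For part~\ref{lem:kesten-equicontinuity-i} the argument is the same but simpler, since $p_{\alpha,N}(k/N) = \bP_k(S_{\eta_{[0,N]}} < 0)$ is an exit probability rather than a point-hitting probability: comparing the walk from $k'$ to the walk from $k$ via the same descent time $\sigma$ to level $k$, the only discrepancies are the probability of exiting above $N$ before reaching $k$ (controlled by Lemma~\ref{lem:kesten-gambler}\ref{lem:kesten-gambler-i} or~\ref{lem:kesten-gambler-ii} exactly as above) and an overshoot below $k$ of negligible size after rescaling; no point-hitting input is needed, so Lemma~\ref{lem:recurrence-hitting} is not even required here. I would remark that in the finite-variance case Proposition~\ref{prop:walk-overshoot}\ref{prop:walk-overshoot-i}–\ref{prop:walk-overshoot-ii} gives uniform tightness of left-overshoots, so the scale-$1$ correction genuinely vanishes after dividing by $N$; in the stable case one uses Proposition~\ref{prop:dynkin-lamperti} for $-X$ to see the left-overshoot is $o(N)$ in probability, uniformly in the (bounded, $\le \lambda N$) level, which is what is needed.

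The main obstacle I anticipate is making the coupling error estimates genuinely \emph{uniform in $N$}: the naive application of Lemma~\ref{lem:recurrence-hitting} gives, for each fixed gap, a probability tending to $1$, but one needs a quantitative rate that degrades only linearly in the \emph{rescaled} gap $\delta$ and not in $N$. This is exactly where Kesten's gambler's-ruin asymptotics (Lemma~\ref{lem:kesten-gambler}) are essential — they give the matching lower and upper bounds that turn "probability $\to 1$" into "probability $\ge 1 - g(\delta) - o(1)$" with an explicit $g(\delta) \to 0$, uniformly over starting points bounded away from the right endpoint. A secondary subtlety, deferred in the paper to Appendix~\ref{sec:appendix-equicontinuity}, is handling the overshoot corrections rigorously in the stable case without circularity: one wants to bound $|q_{\alpha,N}(\cdot) - q_{\alpha,N}(\cdot)|$ using a bound of the same type, so I would phrase it as a fixed-point/bootstrap inequality of the form $\omega_N(\delta) \le g(\delta) + \varepsilon_N + \omega_N(o(1))$ for the modulus $\omega_N$, and argue that this forces $\limsup_N \omega_N(\delta) \le g(\delta) \to 0$.
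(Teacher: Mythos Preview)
Your approach works in the finite-variance case, where overshoots are tight by Proposition~\ref{prop:walk-overshoot}\ref{prop:walk-overshoot-i}, but there is a genuine gap in the stable case $1 < \alpha < 2$. The claim that the left-overshoot $W = k - S_\sigma$ is ``$o(N)$ in probability'' is incorrect. By translation and the symmetry in~\eqref{ass:stable-alpha}, $W$ has the law of the overshoot, by a walk started at~$0$, over level $k' - k$; since $k' - k$ may be as large as $\delta N$, Proposition~\ref{prop:dynkin-lamperti} says precisely that $W/(k'-k)$ has a nondegenerate limiting law with density~$f_\alpha$. Thus $W/N$ is of order~$\delta$, not $o(1)$ as $N \to \infty$, and in fact $f_\alpha$ has infinite mean, so one cannot even bound $W/N$ by $C\delta$ with probability close to~$1$ for any fixed~$C$. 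Your proposed bootstrap $\omega_N(\delta) \le g(\delta) + \eps_N + \omega_N(o(1))$ is therefore not valid as written: the argument of the last term is a random quantity of order~$\delta$ with a heavy tail, and the recursion does not contract.

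The paper avoids overshoots entirely. The key input, stated as Lemma~\ref{lemma:hitting_immediate_neighbours_whp} in Appendix~\ref{sec:appendix-equicontinuity} (which is Kesten's Lemma~3 in~\cite{kesten1961b}), is a \emph{scale-invariant exact point-hitting estimate}: for every $\eps > 0$ there exists $\delta > 0$ such that $\bP_k(T_0 < \eta_{[-cN,N]}) \ge 1 - \eps$ whenever $|k| \le \delta N$ and $N$ is large. This is strictly stronger than Lemma~\ref{lem:recurrence-hitting} (which treats only fixed~$k$), and Kesten proves it by characteristic-function analysis rather than by the probabilistic tools you assemble. Given this, equicontinuity follows in one line via the strong Markov property at the exact hitting time of~$k_2$: for $k_1, k_2$ bounded away from $0$ and $N$ with $|k_1 - k_2|$ small on scale~$N$, one obtains $p_{\alpha,N}(k_1/N) \ge (1-\eps)\, p_{\alpha,N}(k_2/N)$ and symmetrically, with no overshoot correction because the walk lands \emph{exactly} at~$k_2$. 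The ``secondary subtlety'' you flag is therefore not secondary in the stable case but is the heart of the matter, and it requires Kesten's analytic lemma rather than the overshoot control of Section~\ref{sec:overshoots}.
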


Part~\ref{lem:kesten-equicontinuity-ii} is available explicitly in Kesten~\cite{kesten1961a,kesten1961b}; we give a proof of
part~\ref{lem:kesten-equicontinuity-i}, using similar ideas, in Appendix~\ref{sec:appendix-equicontinuity}. 
To exemplify the usefulness of 
Lemma~\ref{lem:kesten-equicontinuity}, we state two of its consequences which extend the convergence stated in Lemma~\ref{lem:kesten-visit-before-exit}.

First, suppose that $y_N \in [0,1)$ is a sequence such that $\lim_{N \to \infty} y_N = y \in [0,1)$. Take $\lambda \in (y,1)$. Then uniform equicontinuity means that for every $\eps>0$ there exists $\delta >0$ such that $| q_{\alpha,N} (y;c) - q_{\alpha,N} (y';c) | \leq \eps$ whenever $y , y'\in [0,\lambda]$ and $|y'-y| \leq \delta$. In particular, for all $N$ large enough, we have $|y-y_N| \leq \delta$.
Consequently, equicontinuity extends the convergence in~\eqref{eq:q_limit-2} and~\eqref{eq:q_limit-alpha} to
\begin{equation}
    \label{eq:q-limit-y-limit}
    \lim_{N \to \infty} q_{\alpha,N} (y_N ; c) = q_\alpha ( y ; c),
\end{equation}
whenever $y_N \to y \in [0,1)$ and the relevant hypotheses from Lemma~\ref{lem:kesten-visit-before-exit} hold.

Here is a second consequence. For fixed $\alpha, c$, the family $( q_{\alpha, N} ( y ; c) )_{N \in \N}$
is uniformly equicontinuous, as  functions of $y \in A$ for  any compact $A \subset [0,1)$.
Hence, by~\eqref{eq:q_limit-alpha}, $q_{\alpha,N} (y; c)$
converges uniformly as $N \to \infty$ to $q_{\alpha} (y;c)$, as functions of $ y \in A$. 
In particular, for every  compact $A \subset [0,1)$, 
\begin{equation}
    \label{eq:q-inf-convergence}
 \lim_{N \to \infty} \inf_{y \in A } q_{\alpha,N} (y ; c ) = \inf_{y \in A} q_{\alpha} (y ; c ) . \end{equation}
We state~\eqref{eq:q-limit-y-limit} and~\eqref{eq:q-inf-convergence} for $q_{\alpha,N}$; analogous statements for $p_{\alpha,N}$ are deduced in the same way.

We will need the following bounds on  $q_{\alpha} (y; c)$;
in the proof we make a first use of the equicontinuity
from Lemma~\ref{lem:kesten-equicontinuity}.
 
 \begin{lemma}
 \label{lem:Bounds_on_q}
  Suppose that~\eqref{ass:irreducible} holds, and
  that~\eqref{ass:stable-alpha} holds with $\alpha \in (1,2)$. Let $q_{\alpha} (y; c)$ be as defined in~\eqref{eq:q_limit_integral}. 
Then it holds that, for every  $c>0$ and all $0 \leq y < 1$,
  \begin{equation}
  \label{eq:lower_bound_q}
      q_\alpha (y ; c) \geq (\alpha -1 ) c^{1-\frac{\alpha}{2}} ( c+y)^{\frac{\alpha}{2} -1} ( 1-y),\end{equation}
      and, moreover, for every $c >0$,
 \begin{equation}
  \label{eq:q-y-at-0}
      \lim_{y \to 0} q_\alpha (y ; c)  = 1. \end{equation}
       On the other hand, for every $\delta>0$ it holds that
 \begin{equation}
 \label{eq:upper_bound_q}
 \sup_{\delta \leq y \leq 1} q_{\alpha} (y; c) < 1.
 \end{equation}
 \end{lemma}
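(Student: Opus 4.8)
The plan is to treat the three assertions separately, starting with the lower bound \eqref{eq:lower_bound_q}, which is the analytic heart of the matter. From the definition \eqref{eq:q_limit_integral}, we have
\[
q_\alpha(y;c) = (\alpha-1)\, c^{1-\frac\alpha2}(1+c)^{\alpha-1}(y+c)^{\frac\alpha2}y^{\alpha-1}\int_y^1 (y+cv)^{-\alpha}(1-v)^{\frac\alpha2-1}\,\ud v,
\]
so the strategy is to find a clean lower bound for the integral $\int_y^1 (y+cv)^{-\alpha}(1-v)^{\frac\alpha2-1}\,\ud v$ and then simplify the resulting product. The integrand has two competing factors: $(y+cv)^{-\alpha}$ is decreasing in $v$, while $(1-v)^{\frac\alpha2-1}$ is increasing in $v$ (since $\frac\alpha2-1<0$). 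The most robust approach is to bound $(y+cv)^{-\alpha}\geq (y+c)^{-\alpha}$ on the whole range $v\in[y,1]$ (valid since $v\leq 1$), pull this constant out, and compute $\int_y^1 (1-v)^{\frac\alpha2-1}\,\ud v = \frac{2}{\alpha}(1-y)^{\alpha/2}$. This gives $q_\alpha(y;c)\geq \frac{2(\alpha-1)}{\alpha}\,c^{1-\frac\alpha2}(1+c)^{\alpha-1}(y+c)^{-\frac\alpha2}y^{\alpha-1}(1-y)^{\alpha/2}$, which is not quite the stated bound; the factor $(1+c)^{\alpha-1}y^{\alpha-1}$ versus the claimed $(1-y)^{1-\frac\alpha2}$ needs reconciling. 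I expect the correct route is instead to bound the integral from below by splitting off a neighbourhood of $v=1$ or by a slightly sharper pointwise estimate — e.g.\ on $v\in[y,1]$ one has $y+cv \le y+c$ and also $(1-v)^{\frac\alpha2-1}\ge(1-y)^{\frac\alpha2-1}$, so $\int_y^1(y+cv)^{-\alpha}(1-v)^{\frac\alpha2-1}\ud v \ge (y+c)^{-\alpha}(1-y)^{\frac\alpha2-1}(1-y) = (y+c)^{-\alpha}(1-y)^{\frac\alpha2}$, and then the cancellation with the prefactor must be checked carefully; the main obstacle is getting exactly the stated exponents rather than a weaker bound, which may require the sharper estimate $y+cv \ge (y+c)v \ge \dots$ or a substitution $v = 1-(1-y)w$ to make the dependence on $y$ transparent.

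For the limit \eqref{eq:q-y-at-0}, the cleanest argument uses the probabilistic interpretation together with equicontinuity rather than brute-force asymptotics of the integral. Since $q_{\alpha,N}(y;c) = \bP_{\lfloor yN\rfloor}(T_0 < \eta_{[-cN,N]})$ and by Lemma~\ref{lem:kesten-equicontinuity}\ref{lem:kesten-equicontinuity-ii} the family $(q_{\alpha,N}(\cdot;c))_N$ is uniformly equicontinuous on $[0,\lambda]$ for any $\lambda<1$, the convergence in \eqref{eq:q_limit-alpha} is uniform on compacts, so $q_\alpha(\cdot;c)$ is continuous on $[0,1)$. Thus it suffices to identify $\lim_{y\to0}q_\alpha(y;c)$. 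On the one hand $q_\alpha(y;c)\le 1$ trivially; on the other, by Lemma~\ref{lem:recurrence-hitting} applied after rescaling — more precisely, for a walk started at a site $k$ that is $o(N)$ from the origin, the probability of hitting $0$ before leaving $[-cN,N]$ tends to $1$ as $N\to\infty$ followed by $k/N\to 0$ — one gets $\liminf_{y\to0}q_\alpha(y;c)\ge 1$. Alternatively, and perhaps more self-containedly, one can simply take the limit $y\to0$ directly in the integral formula: as $y\downarrow0$, $y^{\alpha-1}\int_y^1(y+cv)^{-\alpha}(1-v)^{\frac\alpha2-1}\ud v$ — the integral diverges like $\int_y(\cdot)v^{-\alpha}\ud v \sim \frac{(cy)^{1-\alpha}}{(\alpha-1)c}$ near the lower endpoint (using $c v \ge y$ is false there, so one splits $[y,2y]$ and $[2y,1]$), and the product with $y^{\alpha-1}(y+c)^{\alpha/2}c^{1-\alpha/2}(1+c)^{\alpha-1}(\alpha-1)$ should be shown to tend to $1$; I would do the contribution from $[2y,1]$ (which is $O(y^{\alpha-1})\to0$ after multiplication, actually needs care) and from $[y,2y]$ separately. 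Given the delicacy, I would favour the equicontinuity-plus-Lemma~\ref{lem:recurrence-hitting} route as the primary argument and relegate the calculus check to a parenthetical remark, matching the phrasing in Remark~\ref{rem:q-alpha-def}.

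For the strict upper bound \eqref{eq:upper_bound_q}, fix $\delta\in(0,1)$. Because $q_\alpha(\cdot;c)$ is continuous on $[0,1)$ (established above) and $[\delta,1)$ is not compact, I would argue in two pieces: first, on any compact $[\delta,\lambda]$ with $\lambda<1$, continuity gives a maximum, and it is strictly less than $1$ because $q_\alpha(y;c)<1$ for each fixed $y\in(0,1)$ — this pointwise strict inequality follows either from the integral formula (the prefactor times the integral is a genuine analytic expression one checks is $<1$, e.g.\ via the monotonicity $q_\alpha(y;c)\le q_\alpha(y;c')$ from \eqref{eq:q-monotone} reducing to a computable case, or via the probabilistic fact that $\bP_k(T_0<\eta_{[-cN,N]})$ is bounded away from $1$ uniformly when $k/N$ is bounded away from $0$, which again comes from Lemma~\ref{lem:kesten-gambler}: the walk from $k\approx yN$ has probability $\ge c_0(y)>0$ of exiting at the right endpoint $N$ before hitting $0$); second, for the tail $y\to1$, one shows $\limsup_{y\to1}q_\alpha(y;c)<1$ directly — from the gambler's-ruin estimate \eqref{eq:right-exit-kesten}, a walk started near the top of the interval has probability bounded below (uniformly as $y\to1$, in fact bounded below by a positive constant depending only on $c$) of overshooting $N$ before ever coming back down to $0$, so $q_\alpha(y;c)\le 1 - \kappa(c) < 1$ for all $y$ close to $1$. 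Combining the two pieces yields the supremum over $[\delta,1)$ is $<1$. The main obstacle in this part is making the "bounded away from $1$ uniformly as $y\to 1$" claim rigorous at the level of the limiting function $q_\alpha$ rather than the prelimit $q_{\alpha,N}$; the safest fix is to establish the pointwise bound $q_\alpha(y;c)<1$ and the tail bound both at the level of $q_{\alpha,N}$ (where the probabilistic arguments via Lemmas~\ref{lem:recurrence-hitting} and~\ref{lem:kesten-gambler} are immediate) and then pass to the limit using \eqref{eq:q-inf-convergence} and the uniform convergence on compacts.
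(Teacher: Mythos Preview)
Your main gap is in part~\eqref{eq:lower_bound_q}. You correctly observe that the two factors in the integrand move in opposite directions, but you bound the wrong one. You replace $(y+cv)^{-\alpha}$ by its minimum $(y+c)^{-\alpha}$ and then integrate $(1-v)^{\frac\alpha2-1}$; as you discovered, the resulting bound does not simplify to the claimed form. The paper does the opposite: since $\frac\alpha2-1<0$ and $0\le 1-v\le 1$, one has $(1-v)^{\frac\alpha2-1}\ge 1$, so dropping \emph{this} factor gives
\[
q_\alpha(y;c) \;\ge\; (\alpha-1)c^{1-\frac\alpha2}(1+c)^{\alpha-1}(y+c)^{\frac\alpha2}y^{\alpha-1}\int_y^1 (y+cv)^{-\alpha}\,\ud v =: u_\alpha(y;c),
\]
and the point is that this remaining integral evaluates \emph{exactly}:
\[
\int_y^1 (y+cv)^{-\alpha}\,\ud v = \frac{y^{1-\alpha}}{c(\alpha-1)}\Bigl[(1+c)^{1-\alpha} - \bigl(1+\tfrac{c}{y}\bigr)^{1-\alpha}\Bigr].
\]
Substituting yields the closed form
\[
u_\alpha(y;c) = \Bigl(\frac{c+y}{c}\Bigr)^{\frac\alpha2}\Bigl[1 - \Bigl(\frac{1+c}{1+c/y}\Bigr)^{\alpha-1}\Bigr],
\]
and now the elementary inequality $(1-x)^{\alpha-1}\le 1-(\alpha-1)x$ applied with $x=\frac{c(1-y)}{c+y}$ gives exactly~\eqref{eq:lower_bound_q}. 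Your attempted bounds (replacing $(y+cv)^{-\alpha}$ by a constant, or both factors at once) cannot reproduce the prefactor cancellation that makes the $y^{\alpha-1}(1+c)^{\alpha-1}$ disappear.

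For~\eqref{eq:q-y-at-0}, your equicontinuity-plus-local-hitting route is viable, but note that Lemma~\ref{lem:recurrence-hitting} is stated only for \emph{fixed} $k$, so you would need the extension to $|k|\le\delta N$ (which is Lemma~\ref{lemma:hitting_immediate_neighbours_whp} in the appendix). The paper instead reuses the explicit formula for $u_\alpha(y;c)$ just derived: the first factor is $\ge 1$ and the bracket tends to $1$ as $y\downarrow 0$, so $\liminf_{y\to0}q_\alpha(y;c)\ge\liminf_{y\to0}u_\alpha(y;c)\ge 1$. This is both shorter and self-contained once part~\eqref{eq:lower_bound_q} is done properly.

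For~\eqref{eq:upper_bound_q}, your instinct to use the gambler's-ruin estimate is exactly what the paper does, and more directly than your two-piece argument. One simply observes $\bP_k(T_0<\eta_{[-cN,N]})\le \bP_k(S_{\eta_{[1,N]}}<1)=1-\bP_k(S_{\eta_{[1,N]}}>N)$, translates to a walk from $0$ exiting $[-c_y N,N]$ with $c_y=\frac{y}{1-y}$, and applies Lemma~\ref{lem:kesten-gambler}\ref{lem:kesten-gambler-ii} to get an explicit positive lower bound on the exit-right probability that is monotone in $y$ and hence uniform over $y\in[\delta,1]$. No separate treatment of $y\to 1$ is needed.
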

 \begin{proof}
 Similarly to~\eqref{eq:q_limit_integral}, define
\begin{equation}
    \label{eq:u-def}
    u_\alpha (y ; c) := \left(\alpha -1\right) c^{1-\frac{\alpha}{2}}\left(1+c\right)^{\alpha -1 }\left(y+c  \right)^{\frac{\alpha}{2}}y^{\alpha -1} \int_y^1  \left(y+cv\right)^ {-\alpha}  \ud v.
\end{equation}
Some calculus shows that
\begin{align*}
 \int_y^1 \left(y + cv \right)^{-\alpha} \ud v  = \frac{y^{1-\alpha}}{c (\alpha-1)} \left[ \left(1+c\right)^{1-\alpha} - \left(1+\frac{c}{y} \right)^{1-\alpha} \right].
\end{align*}
Hence, by~\eqref{eq:u-def}, we obtain
\begin{align}
\label{eq:u-expression}
     u_\alpha (y ; c) =   \left(\frac{c+y}{c}\right)^{\frac{\alpha}{2}} 
     \left[ 1 - \left(\frac{1+c}{1+(c/y)} \right)^{\alpha - 1} \right].
\end{align}
Since $0< \alpha /2 < 1$, for all $v \in [0,1)$ 
it holds that $(1-v)^{(\alpha/2)-1} \geq 1$. 
Comparison of~\eqref{eq:q_limit_integral} and~\eqref{eq:u-def}
then shows that $q_\alpha (y ; c) \geq u_\alpha (y; c)$
for all $y \in [0,1]$.
Using the fact that $(1 - x)^{\alpha -1} \leq 1 - (\alpha-1) x$ for $\alpha \in [1,2]$ and {$x \in [0,1]$}, we have that
\[ \left(\frac{1+c}{1+(c/y)} \right)^{\alpha - 1}  = 
\left( 1 - \frac{c(1-y)}{c+y} \right)^{\alpha - 1}
\leq 1 - (\alpha -1 ) \left( \frac{c(1-y)}{c+y} \right).
\]
Combining the last bound with~\eqref{eq:u-expression} yields the bound in~\eqref{eq:lower_bound_q}. Moreover, since for $0 < y \leq 1$ the first factor in the expression on the right-hand side of~\eqref{eq:u-expression} is at least~$1$, and the second factor (in square brackets) is non-negative, it also follows that 
\[ \liminf_{y \to 0} q_\alpha (y ; c) \geq \liminf_{y \to 0} u_\alpha (y ; c) \geq 1 - \limsup_{y \to 0} \left(\frac{1+c}{1+(c/y)} \right)^{\alpha - 1} =1,\]
for every $c >0$ and  $\alpha \in (1,2)$. This yields~\eqref{eq:q-y-at-0}, since $0 \le q_\alpha (y ; c) \le 1$.

Finally, we obtain the bound~\eqref{eq:upper_bound_q} by an application
of Kesten's gambler's ruin estimate~\eqref{eq:right-exit-kesten}.
Observe that, for $c \geq 0$ and $k \in \N$,
\begin{align*}
\bP_k \bigl(  T_{0} < \eta_{[ -cN, N ]} \bigr) 
&
\leq \bP_k \bigl(  S_{\eta_{[ 1, N]}} < 1 \bigr) 
= 1 - \bP_k \bigl(  S_{\eta_{[ 1, N ]}} > N \bigr),
\end{align*}
since in order to visit~$0$ before exiting the interval $[-cN, N]$,
the walk must exit the (smaller) interval $[1, N]$ on the left. 
In particular, for $y \in [0,1)$ and  a sequence $k_N \in \N$ such that $\lim_{N \to \infty} k_N/N = y$, then
using equicontinuity via~\eqref{eq:q-limit-y-limit},
\begin{align*}
 q_\alpha (y; c) & = \lim_{N \to \infty} \bP_{k_N} \left(  T_{0} < \eta_{[ -cN, N ]} \right)  .
 \end{align*}
Suppose $y \in [0,1)$ is rational; then we can choose the sequence $k_N$ such that $k_N = y N$ for a subsequence of $N$. Hence
\begin{align*}
  q_\alpha (y; c) & \leq 1 - \liminf_{N \to \infty}  \bP_{y N} \left(  S_{\eta_{[ 1, N ]}} > N \right) \\
& = 1 - \liminf_{N \to \infty}  \bP_{0} \left(  S_{\eta_{[1- y N, (1-y) N ]}} > (1-y)N \right) \\
& \leq 1 - \liminf_{N \to \infty}  \bP_{0} \left(  S_{\eta_{[ -c' N,  N ]}} > N \right), 
\end{align*}
for every $c' < c_y$ where $c_y:= \frac{y}{1-y} \in (0,\infty)$.
Then $(1+c_y)^{-1} = 1- y$
for $y \in [0,1]$, and hence, 
from~\eqref{eq:right-exit-kesten},
for every rational $y \in [\delta, 1]$,
\begin{align*}
 q_\alpha (y; c)  
& \leq 1 - 
\inf_{\delta \leq y \leq 1} \frac{\Gamma(\alpha)}{\Gamma (\alpha/2)^2} \int_{1-y}^1 u^{\frac{\alpha}{2}-1} (1-u)^{\frac{\alpha}{2}-1} \ud u \\
& =1 -  \frac{\Gamma(\alpha)}{\Gamma (\alpha/2)^2} \int_{1-\delta}^1 u^{\frac{\alpha}{2}-1} (1-u)^{\frac{\alpha}{2}-1} \ud u ,
\end{align*}
which yields~\eqref{eq:upper_bound_q}, using continuity of $y \mapsto q_\alpha (y;c)$ over $[\delta,1]$.
\end{proof}

We conclude this section with a proof of Lemma~\ref{lem:kesten-visit-before-exit}. Here and subsequently, we write $\cF_n := \sigma (S_0, S_1, \ldots, S_n)$
for the $\sigma$-algebra ($\cF_n \subseteq \cF'$) generated by the first $n$ steps of the random walk.

\begin{proof}[Proof of Lemma~\ref{lem:kesten-visit-before-exit}]
Part~\ref{lem:kesten-visit-before-exit-ii}  is Theorem~2 in~\cite[p.~277]{kesten1961b}.
Part~\ref{lem:kesten-visit-before-exit-i} is not explicitly stated in~\cite{kesten1961a,kesten1961b},
but can be deduced from results therein, as we now demonstrate.

 Suppose that $0 < y < 1$, and
set $\lambda := \inf \{ n \in \ZP : S_n < 0\}$, which is a stopping time with respect to filtration~$\cF_n$.
Under either of the hypotheses of the lemma, we have $\Exp X =0$ and hence
$\bP_k ( \lambda < \infty) = 1$ for every $k \in \Z$, by recurrence. 
Suppose  that $k_N \in \N$ 
is a sequence such that $k_N / N \to y$ as $N \to \infty$, and consider events
\begin{align*}
E_1 & := E_1 (N) := \{ S_{\eta_{[ 0,N]}} < 0 \}, \\
E_2 & := E_2 (A) := \{ S_\lambda \in [  - A, 0] \} ,\\
E_3 & := E_3 (N, c) := \{ T_{0} < \eta_{[-cN  , N ]} \},
\end{align*}
and note that both $E_1$ and $E_2$ are $\cF_\lambda$-measurable.
By equicontinuity, similarly to~\eqref{eq:q-limit-y-limit} but applied to $p$, we have that
\begin{align*}
  \lim_{N \to \infty}  \bP_{k_N} (E_1) & = 1 - \lim_{N \to \infty} \bP_{k_N} \bigl( S_{\eta_{[0, N]}} >  N \bigr) 
  = \lim_{N \to \infty} p_{2,N} ( y).
\end{align*}
For rational $y$, there is a sequence of $N_m$, $m \in \N$, for which $y N_m \in \N$, and then
\begin{align*}
\lim_{N \to \infty} p_{2,N} ( y) = \lim_{m \to \infty} \bP_{y N_m} ( S_{\eta_{[0,N_m]}} < 0 )
& = 1 - \lim_{m \to \infty} \bP_0 ( S_{\eta_{[-y N_m, (1-y)N_m ]}} > (1-y) N_m) \\
& = 1 - \lim_{N \to \infty} \bP_0 ( S_{\eta_{[-c_y N, N ]}} > N) = 1- y,
\end{align*}
where $c_y := \frac{y}{1-y}$ and we have used 
Lemma~\ref{lem:kesten-gambler}\ref{lem:kesten-gambler-i} for the convergence. 
In other words, $\lim_{N \to \infty}  p_{2,N} (y) = 1-y$
for all rational~$y$. Moreover,  
we have from
Proposition~\ref{prop:walk-overshoot} that, since $y>0$, for every $\eps >0$, there exists $A < \infty$ such that
$\lim_{N \to \infty} \bP_{k_N} (E_2) \geq 1- \eps$.
Started from a site in $[  - A, 0]$, the probability that the walk visits
$0$ before exit from
$[-cN , N]$
tends to $1$, by Lemma~\ref{lem:recurrence-hitting}, i.e., on event $E_2$,
\[ \bP
( E_3  \mid \cF_\lambda )   
\geq \inf_{z \in [-A,0]} \bP_z ( T_{0} < \eta_{[-cN,N]} )
\to 1,
\]
as $N \to \infty$. Hence, given $\eps >0$ we can choose $A$ large enough and then $N$ large enough 
so that $\bP
( E_3  \mid \cF_\lambda ) \geq 1- \eps$ on $E_2$, that
$\bP_{k_N} (E_1) \geq 1-y -\eps$, and that $\bP_{k_N} (E_2) \geq 1- \eps$.
Then, by the strong Markov property at time $\lambda$,
\begin{align*}
\bP_{k_N} ( T_{0} < \eta_{[-cN  , N ]} ) 
& \geq \bE_{k_N} \left[ \bP ( E_3 \mid \cF_\lambda )  \2{ E_1 \cap E_2}  \right] \\
& \geq (1-\eps) \bP_{k_N} ( E_1) - \bP_{k_N} (E_2^\rc ) \\
& \geq (1-\eps) (1-y -\eps) - \eps \geq 1 - y - 3 \eps.
\end{align*}
Since $\eps >0$ was arbitrary, we deduce that
$\liminf_{N \to \infty}\bP_{k_N} ( T_{0} < \eta_{[-cN  , N ]} )  \geq 1-y$.
Combined with a similar argument in the other direction,
we verify~\eqref{eq:q_limit-2}, with the restriction that $y>0$,
but this restriction is easily removed by a continuity argument, as in Kesten~\cite[p.~275]{kesten1961b}.
  \end{proof}

\section{Proofs of the  bounds on cluster growth}
\label{sec:inner-radius}

\subsection{Lower bounds: Overview and some heuristics}
\label{sec:inner-radius-heuristics}

Recall that $r_m$
defined in equation~\eqref{eq:max-radius}
is the maximal $r \in \ZP$ such that $[-r,r] \cap \Z$ is contained in $\Cl_m$. The purpose of this section is to study the asymptotics of $r_m$. In \S\ref{sec:inner-radius-finite-variance}, we prove~\eqref{eq:inner_radius-lim} of Theorem \ref{thm:idla-light-tail}, which covers the case where increments have finite variance. To prove \eqref{eq:r-m-heavy} of Theorem~\ref{thm:idla-heavy-tail}, the infinite variance case,  we prove the lower bound in \S\ref{sec:inner-radius-infinite-variance-upper-bound} and the upper bound in \S\ref{sec:inner-radius-infinite-variance-lower-bound}.

In view of the bound $\limsup_{m \to \infty} r_m/m \leq 1/2$, a.s.,
immediate from~\eqref{eq:r-R-trivial},
to prove~\eqref{eq:inner_radius-lim}
it suffices to prove that $\liminf_{m \to \infty} r_m/m \geq 1/2$, a.s. It turns out to be more convenient at this point to work with 
the coverage times
\begin{equation}
\label{eq:sigma-x-def}
\sigma_x := \inf \{ m \in \ZP : r_m \geq x \} .
\end{equation}
The sequences $r_m$ and $\sigma_x$
    are related by the inversion:
    \begin{equation} \label{eq:inversion}
     \text{It holds that $r_{m} \geq x$ if and only if $\sigma_{x} \leq m$}. 
         \end{equation}
In particular,  for $c \in (0,\infty)$, equivalent to the statement
    $\liminf_{m\to\infty} (r_m/m) \geq c$, a.s., is the statement $\limsup_{x \to \infty} (\sigma_x/x ) \leq 1/c$, a.s. 
Hence to prove~\eqref{eq:inner_radius-lim} in Theorem~\ref{thm:idla-light-tail},
it is enough to prove the following.
 
 \begin{proposition}
 \label{prop:inner-radius-inverse}
Suppose that~\eqref{ass:irreducible} holds,  $\Exp ( X^2) < \infty$, and $\Exp X =0$. 
 Then, a.s.,
\begin{equation}
\label{eq:sigma_limsup}
   \limsup_{x \to \infty} \frac{\sigma_x}{x} \leq 2.
\end{equation}
 \end{proposition}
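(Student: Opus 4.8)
The plan is as follows. \textbf{Overall strategy and reduction.} Fix $\eps\in(0,1)$. By the inversion~\eqref{eq:inversion} and monotonicity of $m\mapsto\Cl_m$, it suffices to prove that, a.s., $\Z\cap[-x,x]\subseteq\Cl_{m_x}$ for all large $x$, where $m_x:=\lceil 2(1+\eps)x\rceil$. Using the union bound $\Pr(\Z\cap[-x,x]\not\subseteq\Cl_{m_x})\le\sum_{0<|z|\le x}\Pr(z\notin\Cl_{m_x})$ and noting that the terms with $z<0$ reduce to those with $z>0$ on replacing $X$ by $-X$ (which still satisfies~\eqref{ass:irreducible} with $\Exp X=0$, $\Exp(X^2)<\infty$), the proposition follows from the Borel--Cantelli lemma once one shows $\Pr(z\notin\Cl_{m_x})\le C(\eps)\re^{-c(\eps)x}$ uniformly over $0<z\le x$; if only a polynomial rate were available, passing to the geometric subsequence $x_j:=\lfloor(1+\eps)^j\rfloor$ and using that $r_m$ is non-decreasing would still suffice.

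\textbf{The core per-site estimate, adapting~\cite{lbg}.} The plan for bounding $\Pr(z\notin\Cl_m)$ (with $0<z\le x$, $m=m_x$) is to relate non-coverage of $z$ to a deficit in the number of driving walks that reach $z$. The crucial elementary observation is that if $z\notin\Cl_m$ then $z\notin\Cl_{k-1}$ for every $k\le m$, so any walker $k$ that ever visits $z$ must already have settled before doing so (it cannot settle at $z$, and visiting $z$ is itself an exit from $\Cl_{k-1}$); hence, for a suitable interval $J=[-bN,N]\ni z$ and each $k$ with $T_z^{(k)}<\eta_J^{(k)}$, walker $k$ settles at a site lying on its trajectory strictly before time $T_z^{(k)}$, and these settling sites are distinct across $k$. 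Controlling where this settling site can lie --- which is where $\Exp(X^2)<\infty$ enters, via tightness of overshoots (Proposition~\ref{prop:walk-overshoot}), so that a walker first reaching $z$ has not strayed far past it --- one obtains an upper bound $M^z:=\#\{k\le m:T_z^{(k)}<\eta_J^{(k)}\}\le\Phi$ for an appropriate site count $\Phi$. On the other hand $M^z$ is a $\Bin(m,p_{z,N})$ variable with $p_{z,N}=\bP_0(T_z<\eta_J)$, and Lemma~\ref{lem:kesten-visit-before-exit}\ref{lem:kesten-visit-before-exit-i} together with the gambler's ruin asymptotics of Lemma~\ref{lem:kesten-gambler}\ref{lem:kesten-gambler-i} identify $\lim_{N\to\infty}p_{z,N}$ in terms of the ruin probabilities $\tfrac{b}{1+b}$; it is the sharpness of those estimates --- matching lower and upper bounds under only a second-moment hypothesis --- that makes the borderline constant $\tfrac12$ in $m_x\approx 2x$ attainable. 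Tuning the geometry of $J$ and the bookkeeping of $\Phi$ so that $m_x\,p_{z,N}$ exceeds $\Phi$ by a fixed multiplicative factor, a Chernoff bound for the binomial then gives $\Pr(z\notin\Cl_{m_x})\le\Pr(M^z\le\Phi)\le C\re^{-cx}$.

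\textbf{An equivalent viewpoint, and the main obstacle.} Because the scaling is \emph{critical} --- $m_x\approx 2x$ walkers must fill $2x{+}1$ sites with only an $\eps$-fraction of slack --- the argument has no room to waste, and the same content can be repackaged more transparently by tracking the two frontiers $R_m:=\max\{r\ge0:\Z\cap[0,r]\subseteq\Cl_m\}$ and $L_m:=\max\{\ell\ge0:\Z\cap[-\ell,0]\subseteq\Cl_m\}$, so that $r_m=\min(R_m,L_m)$. By Kesten's gambler's ruin estimate the next walker settles to the right of the current occupied block with probability $\approx L_m/(L_m+R_m)=1-R_m/(L_m+R_m)$, and --- since tightness of overshoots forces the block to contain $L_m+R_m=m-o(m)$ sites, i.e.\ the cluster is almost an interval --- this drives $R_m/m$ and $L_m/m$ to the stable fixed point $\tfrac12$ by a law of large numbers of Freedman type~\cite{freedman} for generalised urns. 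Under either organisation the main obstacle is the same, and it is the control of the ``gaps'': one must rule out $\Omega(m)$ occupied sites lying outside the maximal centred interval $\Z\cap[-r_m,r_m]$ (equivalently, $r_m/m$ staying bounded away from $\tfrac12$), and one must upgrade the limiting gambler's-ruin and overshoot identities (valid as $N\to\infty$) to estimates uniform as the frontiers $L_m,R_m\to\infty$ --- and this uniformity is exactly what the equicontinuity of Lemma~\ref{lem:kesten-equicontinuity} and the uniform overshoot tail bound of Proposition~\ref{prop:walk-overshoot}\ref{prop:walk-overshoot-i} provide.
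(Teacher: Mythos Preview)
Your plan is organised differently from the paper's proof: you aim to bound $\Pr(z \notin \Cl_{m_x})$ directly for each $z \in [-x,x]$ with $m_x \approx 2(1+\eps)x$, whereas the paper argues \emph{recursively}. Starting from the random time $\sigma_x$, at which $[-x,x]$ is already full, the paper shows (via Lemmas~\ref{lem:t_visit_good_prob} and~\ref{lem:single-concentration}, then Lemma~\ref{lem:over-filling}) that the next $k_x \approx (2+7\eps)(s-1)x$ walkers fill $[-ux,ux]$ with high probability, so $\sigma_{ux} \le \sigma_x + k_x$; it then telescopes along $x = u^\ell$ and sends $s \downarrow u \downarrow 1$. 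The point of starting from $\sigma_x$ is that every subsequent walker is certainly still active when it first exits $[-x,x]$, so one only has to count settling sites in the one-sided strip $(x,sx]$, of size $(s-1)x$, against a binomial of mean $\approx k/2$.

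Your direct route has a concrete gap at the step ``$M^z \le \Phi$ for an appropriate site count~$\Phi$''. If $\Phi$ is the number of sites in an interval $J = [-a,b]$ containing the pre-$T_z$ trajectory, then even with the right edge tightened to $b = z + O(1)$ via overshoot tightness you need $m_x\, p_{z} > a + z$, where $p_{z} = \bP_0(T_z < \eta_J) \sim a/(a+z)$ by Lemma~\ref{lem:kesten-visit-before-exit}\ref{lem:kesten-visit-before-exit-i}; for $z = x$ and $a = tx$ this reads $2(1+\eps)\,t/(t+1) > t+1$, i.e.\ $t^2 - 2\eps t + 1 < 0$, which has no real solution for $\eps < 1$. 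Overshoot tightness controls only the right edge of the trajectory, but the obstruction is the \emph{left} one: shrinking $a$ kills $p_z$ just as fast as it shrinks $\Phi$. What the~\cite{lbg} scheme actually requires at this point is the \emph{stochastic} bound, on $\{z \notin \Cl_{m_x}\}$, of $M^z$ by $\sum_{\zeta \in J} W_\zeta$, where $W_\zeta$ indicates that an independent walk from $\zeta$ hits $z$ before exiting $J$; that sum has mean $\sum_\zeta \bP_\zeta(T_z < \eta_J) \approx |J|/2$ rather than $|J|$, and it is precisely this extra factor of $2$ that restores the margin and yields the constant~$2$. Your description does not include this step. The urn viewpoint does not bypass the difficulty either: the assertion ``tightness of overshoots forces $L_m + R_m = m - o(m)$'' is not a consequence of Proposition~\ref{prop:walk-overshoot} alone (a walker landing at $R_{m}+2$ may wander arbitrarily before settling), and is in fact essentially the conclusion to be proved, so using it to set up the Freedman dynamics is circular.
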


In the finite-variance case, 
the trivial bound $\sigma_x/x \geq 2$ that follows from~\eqref{eq:r-R-trivial} means that 
to prove Theorem~\ref{thm:idla-light-tail},
the upper bound~\eqref{eq:sigma_limsup} is sufficient. 
 In the infinite-variance case, for Theorem~\ref{thm:idla-heavy-tail}
 we need not only an upper bound, presented in Proposition~\ref{prop:upper-bound_infinite-variance} that follows, but also   non-trivial lower bounds,
which are the subject of \S\ref{sec:inner-radius-infinite-variance-lower-bound}. Define the constant
\begin{equation}
    \label{eq:C-alpha-dash}
   C_\alpha' :=  (\alpha -1)^{-1} (4-\alpha)(3-\alpha)^3 (2-\alpha)^{\alpha-2} . 
\end{equation}

 \begin{proposition}
 \label{prop:upper-bound_infinite-variance}
    Suppose that~\eqref{ass:irreducible} holds, 
and that~\eqref{ass:stable-alpha} holds with $\alpha \in (1,2)$. Then, for $C_\alpha' \in (2,\infty)$ 
given by~\eqref{eq:C-alpha-dash}, it holds that, 
a.s.,
\begin{equation}
\label{eq:sigma_limsup_infinite_variance_upper_bound}
\limsup_{x \to \infty} \frac{\sigma_x}{x} \leq C'_\alpha.
\end{equation}
\end{proposition}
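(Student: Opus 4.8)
My plan is to work with the coverage times and prove the equivalent statement by Borel--Cantelli: since $\sigma_x$ is nondecreasing and is tied to $r_m$ by the inversion~\eqref{eq:inversion}, it suffices to fix $\eps>0$ and show $\sum_{x\ge 1}\bP\bigl([-x,x]\not\subseteq\Cl_{m(x)}\bigr)<\infty$, where $m(x):=\lceil(C'_\alpha+\eps)x\rceil$; note $C'_\alpha>2$ is consistent with the trivial constraint $r_m\le m/2$ from~\eqref{eq:r-R-trivial}. A union bound reduces this to controlling $\bP(z\notin\Cl_{m(x)})$ for each $z$ with $|z|\le x$, and by the symmetry of $X$ it is enough to treat $z\ge 0$, for which I aim for a bound decaying exponentially in $x$.

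The first real step is to confine the growth. Fix a parameter $c>0$ and a window half-width $N=N(x)$ with $N\asymp x$, large enough that $[-x,x]\subseteq I$ where $I:=\{-\lceil cN\rceil,\dots,N\}$. Run a modified IDLA in which any walker that would settle outside $I$ is discarded (the cluster does not grow at that step), producing $\Cl'_0\subseteq\Cl'_1\subseteq\cdots$. A routine monotone coupling---the analogue of the abelian/monotonicity property, that fewer walkers or a smaller current cluster yield a smaller future cluster---gives $\Cl'_m\subseteq\Cl_m\cap I$ for every $m$, so $\bP(z\notin\Cl_m)\le\bP(z\notin\Cl'_m)$. This has two payoffs: the confined cluster automatically lives in a window of size $\Theta(x)$, so I never have to track how far the genuine cluster's far-out ``strays'' can wander (a real nuisance here, since overshoots are macroscopic by Proposition~\ref{prop:dynkin-lamperti}); and all the relevant hitting and exit events now take place inside a window of diverging size, which is precisely the regime Kesten's estimates cover.

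Next comes the Lawler--Bramson--Griffeath-type comparison. For the fixed target $z\in I$, couple each driving walk $S^{(j)}$ with its free continuation and set $N_z:=\#\{\,j\le m:\ S^{(j)}\text{ visits }z\text{ before exiting }I\,\}$, so that $N_z\sim\Bin\bigl(m,p_N(z)\bigr)$ with $p_N(z)=\bP_0(T_z<\eta_I)$; after a translation and reflection, $p_N(z)$ is one of the quantities $q_{\alpha,M}(\,\cdot\,;\,\cdot\,)$ of~\eqref{Eq: def_q_N} with $M\asymp N$ and arguments depending only on $z/N$ and $c$, so by Lemma~\ref{lem:kesten-visit-before-exit} (with the equicontinuity upgrade~\eqref{eq:q-limit-y-limit}) it converges, and by the lower bound~\eqref{eq:lower_bound_q} of Lemma~\ref{lem:Bounds_on_q} it is bounded below, uniformly for $z/N$ away from $1$, by a positive quantity of the shape $(\alpha-1)\,c\,(1-z/N)^{1-\alpha/2}/(c+z/N)$. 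On the event $\{z\notin\Cl'_m\}$, any walker whose continuation reaches $z$ must already have settled---at a site of $I$, distinct from $z$ and distinct across walkers---by the time it gets there, so $N_z$ is bounded by a ``blocking'' count which is itself at most the number of cluster sites of $I$ of the relevant kind; call a (deterministic, or after a concentration step essentially deterministic) such bound $L$, with $L\asymp x$. Then $\bP(z\notin\Cl'_m)\le\bP\bigl(\Bin(m,p_N(z))\le L\bigr)+(\text{small})$, and a Chernoff bound makes the first term exponentially small in $m$, hence in $x$, provided $m\,p_N(z)>(1+\eps)L$ for every relevant $z$.

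Finally one optimises the free parameters: feeding in the explicit lower bound on $p_N(z)$, taking the worst site $z$ (largest $z/N$), and choosing $c$ and the ratio $N/x$ to minimise the resulting threshold for $m$, an elementary calculus computation should reduce the requirement to $m>(C'_\alpha+o(1))x$ with $C'_\alpha$ as in~\eqref{eq:C-alpha-dash}; as Remark~\ref{rem:idla-heavy-tail} stresses, this is a clean but non-optimal constant, so some slack in the comparison is affordable. The genuine obstacle, I expect, is exactly the comparison of the previous paragraph: pinning down the sharp ``blocking count'' $L$---the right way to charge the walkers whose continuations hit $z$ against the bounded amount of cluster mass the confined window can absorb en route to $z$---and proving its concentration. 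This is where Kesten's gambler's-ruin bound (Lemma~\ref{lem:kesten-gambler}) and the tightness-versus-largeness dichotomy for overshoots (Propositions~\ref{prop:walk-overshoot} and~\ref{prop:dynkin-lamperti}) enter most heavily, and it is precisely the macroscopic overshoots of the infinite-variance regime that force $L$, and hence $C'_\alpha$, strictly above the finite-variance value $2$.
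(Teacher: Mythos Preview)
Your approach is a direct (one-shot) LBG-type argument; the paper instead argues \emph{iteratively}, bounding $\sigma_{ux}-\sigma_x$ for $u>1$ close to~$1$ and then summing a geometric series along $x=u^n$. This distinction is not cosmetic: it is precisely what controls the blocking count $L$, and hence the final constant.

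In your setup the blocking count is inevitably of order $|I|$. On $\{z\notin\Cl'_m\}$ the $N_z$ walkers settle at distinct sites of $I\setminus\{z\}$, so the best deterministic bound is $N_z\le|I|-1$; even the sharper LBG expectation bound $\Exp M_z\le\sum_{w\in I}\bP_w(T_z<\eta_I)$ is of order $|I|$ in one dimension. To see that this cannot yield $C'_\alpha$, test the $\alpha\uparrow 2$ limit: with a symmetric window $I=[-\beta x,\beta x]$, the worst target $z=x$ has, by~\eqref{eq:q_limit-2}, $p_N(x)\to 1-\tfrac{1}{\beta+1}=\tfrac{\beta}{\beta+1}$, and the constraint $m\,p_N(x)>|I|\approx 2\beta x$ forces $m/x\ge 2\beta\cdot\tfrac{\beta+1}{\beta}=2(\beta+1)\ge 4$ for every $\beta>1$. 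So your method cannot produce any constant below~$4$, whereas $C'_\alpha\downarrow 2$ as $\alpha\uparrow 2$ by~\eqref{eq:C-alpha-dash}. The calculus you hope for cannot arrive at $C'_\alpha$.

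The paper sidesteps this by working \emph{after} time $\sigma_x$, when $[-x,x]$ is already full. A walker released after $\sigma_x$ that exits $[-x,x]$ on the right and visits $t\in(x,ux]$ before exiting $[-x,sx]$ can only be blocked by one of the at most $(s-1)x$ sites in $(x,sx]$: this is Lemma~\ref{lem:over-filling}. Since $s>u>1$ are free, the blocking count $(s-1)x$ can be an arbitrarily small multiple of~$x$; the trade-off against the shrinking hitting probability $\uq(u,s)$ from~\eqref{eq:q-lower-bound} then gives~\eqref{eq:s-u-algebra-alpha}, and the choice $u=1+h$, $s=1+h+h^2$, $h=2-\alpha$ yields~\eqref{eq:C-alpha-dash}. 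Your confined-IDLA coupling $\Cl'_m\subseteq\Cl_m$ is valid and the overall architecture is reasonable, but without the iterative ``launch from a full interval'' device you cannot push $L$ below $\Theta(|I|)$, and the stated constant is out of reach.
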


The outline of the proofs of the upper bounds in both Propositions~\ref{prop:inner-radius-inverse} and~\ref{prop:upper-bound_infinite-variance}, has similarities to the corresponding argument in~\cite[\S 3]{lbg}, with
the estimates of \S\ref{sec:kesten} (derived from Kesten~\cite{kesten1961a,kesten1961b}) providing the central probabilistic components.

 Recall from \S\ref{sec:model} that $S^{(j)}$ denotes the $j$th random walk in the IDLA process,
 and that $\tau_j$, defined in equation~\eqref{eq:cluster expansion}, is the internal time index of the walk $S^{(j)}$ when it first exits the prior cluster~$\Cl_{j-1}$. In this section we develop several arguments using the sequence of random walks $S^{(j)}$. We note that the IDLA process depends only on each $S^{(j)}$ up to its associated time~$\tau_j$,  but in fact the walk $S^{(j)}$ is defined for all time, and several arguments make use of this, for example, to overcome certain dependence. In arguments that emphasize the   finite-walk perspective, we sometimes describe walk $S^{(j)}$ as being \emph{active} up until time $\tau_j$, and then \emph{terminating}, while in arguments that use the full trajectory of the walk, we sometimes refer to the walk $S^{(j)}$ as \emph{indefinitely extended}.
 
Let us describe, informally, how the results of \S\ref{sec:kesten} 
enable us to understand the behaviour of 
$\sigma_x$.
Consider time $\sigma_x$, so that the interval $[-x,x]$
is fully occupied by the cluster $\Cl_{\sigma_x}$.
We fix $u >1$ 
and study the IDLA process up to the time at which the interval $[-ux, ux]$
is fully occupied, in order to bound from above $\sigma_{ux} - \sigma_x$. 
We must argue that unoccupied sites are filled rather rapidly by subsequent walkers. 
The outline of the argument is as follows. 
\begin{itemize}
\item[1.] The gambler's ruin estimates of Lemma~\ref{lem:kesten-gambler}
    show that any subsequent walker will exit $[-x,x]$
    on the right or left each with probability almost $1/2$, and, since $[-x,x]$
    is fully occupied, the walker will still be active when it does so.
    This is the case in both the settings of Propositions~\ref{prop:inner-radius-inverse} and~\ref{prop:upper-bound_infinite-variance}.
    \item[2a.]
    Consider some site $t \in [x, ux] \cap \Z$.
    In the finite-variance case (Proposition~\ref{prop:inner-radius-inverse})
    by tightness of overshoots (Proposition~\ref{prop:walk-overshoot}\ref{prop:walk-overshoot-ii})
    and recurrence (Lemma~\ref{lem:recurrence-hitting}), a random walk that exits $[-x,x]$ on the right, will, with probability close to~$1$ (if $u \approx 1$) visit $t$ before exiting from the interval $[-x, s x]$, where $s >u$. Hence (using point~1 above) the probability that the walk exits $[-x,x]$ on the right \emph{and} visits $t$ before exit from $[-x, s x]$ is approximately $q \approx 1/2$.
    \item[2b.]
Consider some site $t \in [x, ux] \cap \Z$. In the infinite-variance case  (Proposition~\ref{prop:upper-bound_infinite-variance})  a random walk that exits $[-x,x]$ on the right will visit $t$ before exiting from the interval $[-x, s x]$, where $s>u$, with a   probability bounded below by some strictly positive $q = q_\alpha (u,s) \in (0,1/2)$, by  Lemma~\ref{lem:kesten-visit-before-exit} and the bounds in Lemma~\ref{lem:Bounds_on_q}.
    \item[3.]
    To obtain statements that hold with very high probability, from the walk-by-walk probability statements in 2a and 2b, we consider the $k = A x$
 walkers directly after time $\sigma_x$ and exploit binomial concentration. The number
    of these walkers that exit $[-x,x]$ on the right and then visit  $t \in [x, ux] \cap \Z$
    before exiting the interval $[-x, s x]$ is binomial, and, by binomial concentration, we can, by balancing small constants, ensure is, with very high probability, at least about $A q x$.
    \item[4.]
    There are at most $(s-1)x$ sites of $\Z$ in the interval $[x,sx]$, and each can accommodate at most one walker adding to the aggregate, so if  $Aq  > s -1$, then at least one walker is active when it visits $t$. Hence $t$ is contained in the cluster at time about $\sigma_x + A  x$, where the optimal $A$ is $A \approx (s-1)/q$. (See Lemma~\ref{lem:over-filling} for a precise version of this argument.)
    \item[5.] The binomial concentration is sufficient to show that this occurs with high probability for all $t \in [-ux, ux] \cap \Z$, i.e., with high probability
    by time $\approx \sigma_x +  (s-1) x/q $ we cover $[-ux , ux]$.
This shows $\sigma_{ux} - \sigma_x \leq  (s-1)x/q$, with high probability. 
With an interpolation argument, this translates to a bound of the form
     \begin{equation}
     \label{eq:outline-bound}
     \limsup_{x \to \infty} \frac{\sigma_{x }}{x} \leq \frac{1}{q} \left(\frac{s-1}{u-1} \right) u, \as; \end{equation}
     see~\eqref{eq:s-u-algebra} (for the finite variance case)
     and~\eqref{eq:s-u-algebra-alpha} (infinite variance) below.
Choosing constants carefully, in the case of Proposition~\ref{prop:inner-radius-inverse}
we can take $s \approx u \approx 1$ and $q \approx 1/2$, giving the constant~$2$ in the bound~\eqref{eq:sigma_limsup}.
In the setting of Proposition~\ref{prop:upper-bound_infinite-variance}, 
choosing $s >u >1$ appropriately leads to the upper bound in~\eqref{eq:sigma_limsup_infinite_variance_upper_bound} with the constant $C_\alpha'$ from~\eqref{eq:C-alpha-dash}.
\end{itemize}

\subsection{Preliminaries}
\label{sec:inner-radius-preliminaries}

Analogously to the notation in~\eqref{eq:T-eta-S},
for the random walk $S^{(j)}$, $j \in \N$,
define the first hitting time $T^{(j)}_t$
of   $t \in \Z$ by
\begin{equation}
    \label{eq:T-j-def}
    T^{(j)}_t := \inf \{ n \in \ZP: S^{(j)}_n = t \},
\end{equation}
and  the first exit time $\eta^{(j)}_A$ from the set $A \subseteq \Z$ by
\begin{equation}
    \label{eq:eta-j-def}
    \eta^{(j)}_A := \inf \{ n \in \ZP: S^{(j)}_n \notin A \}.
\end{equation}
Note that, $\tau_m$ as defined in equation~\eqref{eq:cluster expansion}
has the representation $\tau_m = \eta^{(m)}_{\Cl_{m-1}}$ in the notation in equation~\eqref{eq:eta-j-def}.
For $t \in \Z$ and $x \in \N$, define
\begin{equation}
    \label{eq:N-plus-def}
    N^+_{m,k} (t ; x) := \sum_{j=m+1}^{m+k} \1 { S^{(j)}_{\eta_{[-x,x]}^{(j)}} > x, \, T^{(j)}_t \le \eta^{(j)}_{\Cl_{j-1}}} ,
\end{equation}
the number of the next $k$ random walks, released after time~$m$,
that (i) exit the interval $[-x,x]$ on the right, and (ii) visit $t$ before {or at the time at which} they exit the contemporary cluster. Similarly, define
\begin{equation}
    \label{eq:N-minus-def}
    N^-_{m,k} (t ; x) := \sum_{j=m+1}^{m+k} \1 { S^{(j)}_{\eta_{[-x,x]}^{(j)}} < -x, \, T^{(j)}_t \le \eta^{(j)}_{\Cl_{j-1}}} .
\end{equation}
We will always be assuming \eqref{ass:irreducible},
so that the random walk $S$ is irreducible, and hence
$\eta^{(j)}_{[-x,x]} < \infty$ for every $j$ and every $x$, almost surely. Note that  the sum $N^+_{m,k} (t ; x) +  N^-_{m,k} (t ; x)$ does not depend on~$x$ and hence we may define
\begin{equation}
    \label{eq:N-total-def}
    N_{m,k} (t ) := \sum_{j=m+1}^{m+k} \1 {  T^{(j)}_t \le \eta^{(j)}_{\Cl_{j-1}}}  = 
     N^+_{m,k} (t ; x) +  N^-_{m,k} (t ; x).
\end{equation}
\begin{remark}
\label{Rem:t_not_in_cluster_blg}
The definitions of $N_{m,k} (t)$, $N^{\pm}_{m,k} (t ; x)$ are motivated from similar quantities defined in \cite{lbg}, where the authors also make use of the property~\eqref{eq:P-not-in-Cl-2} below.
\end{remark}

Observe that, 
if there exists a (smallest) $j$ for which $T^{(j)}_t \leq  \eta^{(j)}_{\Cl_{j-1}}$,
then $t \in \Cl_{j}$. In particular,
\begin{equation}
\label{eq:P-not-in-Cl-2}
\{ t \in \Cl_{m+k} ,\,  t \notin \Cl_m  \} = \{ N_{m,k} (t) > 0 \}, \text{ and }
\Pr ( t \notin \Cl_{m+k} \setminus \Cl_m ) = \Pr ( N_{m,k} (t) = 0 ) .\end{equation}
It turns out to be convenient to work, instead of  with the quantities 
defined in~\eqref{eq:N-plus-def}--\eqref{eq:N-total-def}, with 
some related quantities, for the indefinitely extended walks,
that dispense with the dependence on the
contemporary cluster and so enjoy greater independence structure.
Define, for $s > 1$,
\begin{equation}
    \label{eq:K-plus-def}
    K^+_{m,k} (t, x, s) :=  
    \sum_{j=m+1}^{m+k} \1 { S^{(j)}_{\eta_{[-x,x]}^{(j)}} > x, \, T_t^{(j)} < \eta^{(j)}_{[- x,s x]}},
    \end{equation}
the number of the next $k$ random walks, released after time~$m$,
that (i) exit the interval $[-x,x]$ on the right and (ii) visit $t$ before they exit $[-x, s x]$.
The events in the indicators
in~\eqref{eq:K-plus-def} are i.i.d., so that
\begin{equation}
\label{eq:bin-plus}
K^+_{m,k} (t, x, s)  \sim \Bin \left(k , p^+(t, x, s)\right) , 
\end{equation}  
where
\begin{equation}
    \label{eq:ptx-plus-def}
p^+ (t, x, s) := \bP_0 \bigl(  S_{\eta_{[-x,x]}} > x, \, T_t < \eta_{[-x,s x]} \bigr).
\end{equation}
Similarly, let 
\begin{equation}
    \label{eq:K-minus-def}
    K^-_{m,k} (t, x , s) :=  
    \sum_{j=m+1}^{m+k} \1 { S^{(j)}_{\eta_{[-x,x]}^{(j)}} <-x , \, T_t^{(j)} < \eta^{(j)}_{[-s x, x]}},
    \end{equation}
be the number of the next $k$ random walks, released after time~$m$,
that (i) exit the interval $[-x,x]$ on the left and (ii) visit $t$ before they exit $[-s x, x]$.
The events in the indicators
in~\eqref{eq:K-minus-def} are i.i.d., so that
\begin{equation}
\label{eq:bin-minus}
K^-_{m,k} (t, x , s)  \sim \Bin \left(k , p^-(t, x, s)\right) , 
\end{equation}
where
\begin{equation}
    \label{eq:ptx-minus-def}
p^- (t , x , s) := \bP_0 \bigl(  S_{\eta_{[-x,x]}} <- x, \, T_t < \eta_{[-s x,x]} \bigr).
\end{equation}

The following is our basic tool for obtaining upper bounds on $\sigma_x$
in terms of the quantities $K^{\pm}_{\sigma_x,k} (t,x,s)$ as defined in equations~\eqref{eq:K-plus-def} and~\eqref{eq:K-minus-def}.

\begin{lemma}
\label{lem:over-filling}
Fix $s > u > 1$.
If, for $x,k \in \N$,
 it holds that    
\begin{equation}
\label{eq:K-large}
\min_{t \in \Z \cap (x,ux]} \min \left( K^+_{\sigma_x,k}(t,x,s),  K^-_{\sigma_x,k}(-t,x,s) \right) \geq \lceil (s-1)x \rceil,
\end{equation}
then $\sigma_{ux} \leq \sigma_x + k$. 
\end{lemma}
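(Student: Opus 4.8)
The plan is to show that condition~\eqref{eq:K-large}, interpreted correctly via the relationship between the indefinitely-extended walks and the active walks, forces every site of $\Z \cap ([-ux,-x) \cup (x,ux])$ to be absorbed into the cluster within $k$ further walks after time $\sigma_x$. Fix $x,k$ as in the hypothesis. The key bookkeeping observation is the following: at time $\sigma_x$ the interval $[-x,x] \cap \Z$ is fully occupied, so each of the next $k$ walkers $S^{(j)}$, $j \in \{\sigma_x+1,\ldots,\sigma_x+k\}$, is still \emph{active} (has not yet terminated) at its exit time $\eta^{(j)}_{[-x,x]}$ from $[-x,x]$, since it can only terminate at a site outside the current cluster, and $[-x,x]$ is inside. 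Moreover, for such a walk, the indefinitely-extended event $\{T_t^{(j)} < \eta^{(j)}_{[-x,sx]}\}$ counted in $K^+_{\sigma_x,k}(t,x,s)$ implies that the active walk visits $t$ before exiting $[-x,sx]$: until it terminates the active and extended walks coincide.

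The main step is an induction on the sites of $(x,ux]\cap\Z$ from left to right (and symmetrically on $[-ux,-x)$). Consider the leftmost site $t\in(x,ux]\cap\Z$ not yet in $\Cl_{\sigma_x}$; I claim it is filled by one of the next $k$ walkers. The subtlety is that only walkers that are still active upon reaching $t$ can settle there or help fill intermediate sites. Each walker counted by $K^+_{\sigma_x,k}(t,x,s)$ exits $[-x,x]$ on the right while active, then visits $t$ before exiting $[-x,sx]$. Along the way between $x$ and $t$, such a walk may terminate at some vacant site in $(x,sx)\cap\Z$; but each such termination adds a new site to the cluster, and there are at most $\lceil (s-1)x\rceil$ sites in $\Z\cap(x,sx]$ available to absorb walkers. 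Hence, among the $K^+_{\sigma_x,k}(t,x,s) \geq \lceil (s-1)x\rceil$ walkers, if they were all to terminate strictly before reaching $t$, they would fill more than $\lceil (s-1)x\rceil$ sites — more precisely, since $t\leq ux < sx$, the count of distinct sites in $(x,sx]\cap\Z$ that could be newly occupied before $t$ is filled is at most $\lceil (s-1)x\rceil$, so by the pigeonhole principle at least one of these walkers reaches $t$ while still active, and therefore $t \in \Cl_{j}$ for some $j \le \sigma_x + k$. This is the heart of Lemma~\ref{lem:over-filling}; the symmetric argument with $K^-_{\sigma_x,k}(-t,x,s)$ handles the sites in $[-ux,-x)$.

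To turn the per-site statement into the claim $\sigma_{ux}\leq\sigma_x+k$, I would run the induction properly: order the vacant sites in $(x,ux]\cap\Z$ increasingly as $t_1<t_2<\cdots$, and show by induction on $i$ that $t_i\in\Cl_{\sigma_x+k}$. The point is that at each stage the count $K^+_{\sigma_x,k}(t_i,x,s)$ bounds below the number of walkers (among the fixed window of $k$) that, if active on arrival at $t_i$, pass through every site of $(x,t_i]$; the sites strictly between consecutive $t_i$'s are either already occupied or get occupied earlier in the induction, and the total absorption budget over $(x,sx]\cap\Z$ is $\lceil(s-1)x\rceil$, which is exactly the lower bound assumed in~\eqref{eq:K-large}. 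Combining the rightward filling of $(x,ux]\cap\Z$ with the leftward filling of $[-ux,-x)\cap\Z$, and recalling $[-x,x]\cap\Z\subseteq\Cl_{\sigma_x}$, we conclude $[-ux,ux]\cap\Z\subseteq\Cl_{\sigma_x+k}$, which is precisely $r_{\sigma_x+k}\geq ux$, i.e.\ $\sigma_{ux}\leq\sigma_x+k$ by the definition~\eqref{eq:sigma-x-def}.

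The main obstacle I expect is making the pigeonhole/absorption-budget argument fully rigorous: one must carefully track that a walker which is "wasted" (terminates before reaching $t_i$) consumes exactly one previously-vacant site in $\Z\cap(x,sx]$, that these consumed sites are genuinely distinct across walkers, and that no double-counting occurs between the budget used at stage $i$ and at stage $i-1$. A clean way to organise this is to note that once we are processing the next $k$ walkers in order, the cluster only grows, so the set of sites in $(x,sx)\cap\Z$ available for termination is nonincreasing; a walker counted by $K^+_{\sigma_x,k}(t_i,x,s)$ that fails to reach $t_i$ while active must have terminated at one of these (at most $\lceil(s-1)x\rceil$) sites, and distinct such failures correspond to distinct sites. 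Since the lower bound $\lceil(s-1)x\rceil$ is attained, not all can fail, giving the required active arrival at $t_i$. The interpolation in $u$ and the choice of optimal $k$ are deferred to the later applications of this lemma, so here it suffices to establish the deterministic implication stated.
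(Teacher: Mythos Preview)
Your proposal is correct and uses the same pigeonhole idea as the paper's proof: each walker counted in $K^+_{\sigma_x,k}(t,x,s)$ that fails to reach $t$ while active must terminate at a distinct site of $(x,sx]\cap\Z$, and there are strictly fewer than $\lceil(s-1)x\rceil$ such sites (excluding $t$ itself), so not all can fail. The induction you set up is unnecessary, however: the paper argues for each fixed $t\in(x,ux]\cap\Z$ separately, with the same capacity bound used independently for every $t$, so there is no shared budget to track and the double-counting concerns in your final paragraph do not arise.
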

\begin{proof}
Consider $t \in \Z \cap (x,ux]$. If $K^+_{\sigma_x,k}(t,x,s)\geq \lceil (s-1)x \rceil$,
then at least $\lceil (s-1)x \rceil $ (indefinitely extended) 
random walks, released after time~$\sigma_x$, visit site~$t$ before exiting $[-x,sx]$.
The set $[-x,sx] \setminus [-x,x]$ contains no more
than $(s-1)x$ sites of $\Z$,
and so not all of the $\lceil (s-1)x \rceil$ particles can terminate before reaching~$t$.
Hence \[ K^+_{\sigma_x,k}(t,x,s)\geq \lceil (s-1)x \rceil \text{ implies that }
t \in \Cl_{\sigma_x +k}.
\]
Similarly, 
$ K^-_{\sigma_x,k}(-t,x,s)\geq \lceil (s-1)x \rceil$  implies that 
$-t \in \Cl_{\sigma_x +k}$.
It follows that~\eqref{eq:K-large} implies that  $t \in \Cl_{\sigma_x +k}$ for all $t \in [-ux, ux] \cap \Z$, and hence
$\sigma_{ux} \leq \sigma_x + k $.
\end{proof}

\subsection{Finite variance}
\label{sec:inner-radius-finite-variance}

To apply Lemma~\ref{lem:over-filling},
we need to choose $k = k_x$, depending on~$x$,
in such a way that the event in~\eqref{eq:K-large} 
occurs with high probability. To do so,
we will use binomial concentration
applied to~\eqref{eq:bin-plus} and~\eqref{eq:bin-minus}, and hence
we need to quantify the asymptotics of $p^+$ and $p^-$
defined in~\eqref{eq:ptx-plus-def} and~\eqref{eq:ptx-minus-def}, respectively.
We achieve this through results which  make the heuristic ideas in~\S\ref{sec:inner-radius-heuristics} formal, and will enable us to prove 
Proposition~\ref{prop:inner-radius-inverse} and hence Theorem~\ref{thm:idla-light-tail}. {First, the next result is a formal statement of the heuristic in point 2a in the outline in~\S\ref{sec:inner-radius-heuristics} derived from gambler's ruin estimates and tightness of the overshoots.}

\begin{lemma}
\label{lem:t_visit_good_prob}
Suppose that~\eqref{ass:irreducible} holds,  $\Exp ( X^2) < \infty$, and $\Exp X =0$.
Then for every $\eps >0$, there is a $u_\eps > 1$ such that the following holds. For
every $u \in (1, u_\eps)$ and every $s > u$,
\begin{equation}
\label{eq:p-plus-minus-eps-bound}
 \lim_{x \to \infty}       \sup_{t \in \Z \cap [x,ux]}  \left| p^\pm(t,x,s)-\frac{1}{2} \right| <\varepsilon.
    \end{equation}
    \end{lemma}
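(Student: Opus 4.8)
The plan is to decompose the event defining $p^+(t,x,s)$ at the first exit time $\eta_{[-x,x]}$ and then use the two Kesten-type ingredients: the gambler's ruin estimate (Lemma~\ref{lem:kesten-gambler}\ref{lem:kesten-gambler-i}), which controls whether the walk leaves $[-x,x]$ on the right, and the visit-before-exit estimate (Lemma~\ref{lem:kesten-visit-before-exit}\ref{lem:kesten-visit-before-exit-i}), which controls whether it subsequently reaches~$t$. Concretely, write
\[
p^+(t,x,s) = \bP_0 \bigl( S_{\eta_{[-x,x]}} > x,\ T_t < \eta_{[-x,sx]} \bigr).
\]
On the event $\{ S_{\eta_{[-x,x]}} > x \}$, by tightness of the overshoot (Proposition~\ref{prop:walk-overshoot}\ref{prop:walk-overshoot-ii}) the walk is, with probability close to~$1$ uniformly in~$x$, located within a bounded distance $A$ to the right of~$x$; by the local hitting estimate (Lemma~\ref{lem:recurrence-hitting}) it then hits every site in $[x,ux]$ before leaving a slightly larger interval, provided $u$ is close enough to~$1$ that $t$ is ``near'' $x$ on the appropriate scale. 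Combining these via the strong Markov property at $\eta_{[-x,x]}$ — exactly as in the proof of Lemma~\ref{lem:kesten-visit-before-exit}\ref{lem:kesten-visit-before-exit-i} — gives that, for $u$ sufficiently close to~$1$ and any $s>u$,
\[
\liminf_{x\to\infty}\ \inf_{t \in \Z \cap [x,ux]} p^+(t,x,s) \geq \lim_{x\to\infty}\bP_0\bigl( S_{\eta_{[-x,x]}}>x\bigr) - o(1) = \tfrac12 - o(1),
\]
where the last equality is Lemma~\ref{lem:kesten-gambler}\ref{lem:kesten-gambler-i} with $c=1$ (see Remarks~\ref{rems:kesten-gambler}\ref{rems:kesten-gambler-i}). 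For the matching upper bound, simply drop the hitting requirement: $p^+(t,x,s) \leq \bP_0( S_{\eta_{[-x,x]}} > x) \to 1/2$, again by Lemma~\ref{lem:kesten-gambler}\ref{lem:kesten-gambler-i}. The bound for $p^-$ is identical by the left--right symmetry of the construction (Remark~\ref{rem:left-overshoots}), applied to $-t$.

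The one point requiring care is making the convergence \emph{uniform over $t \in \Z \cap [x,ux]$}, since Lemma~\ref{lem:kesten-visit-before-exit} is stated pointwise in the rescaled variable $y$. Here I would use the equicontinuity of Lemma~\ref{lem:kesten-equicontinuity}\ref{lem:kesten-equicontinuity-ii} together with the extension~\eqref{eq:q-limit-y-limit}: writing $t = \lceil y_x N \rceil$-style with $y_x \in [1, u]$ and rescaling the interval $[-x,sx]$ so that $t$ plays the role of the target origin, the relevant quantity becomes $q_{2,N}(\cdot\,;\cdot)$ evaluated at an argument in a compact subinterval of $[0,1)$ (because $u<s$ keeps $t$ strictly interior), and~\eqref{eq:q-inf-convergence} delivers the uniform lower bound $q_2(\cdot\,;\cdot)=1-(\cdot)$, which is close to~$1$ when $u$ is close to~$1$. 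Choosing $u_\eps$ so that this quantity exceeds $1-\eps$ on $[0,u_\eps-1]$, and combining with the gambler's ruin value $1/2$, yields $\inf_t p^\pm(t,x,s) \geq 1/2 - \eps$ for all large~$x$; with the trivial upper bound $1/2 + o(1)$ this gives~\eqref{eq:p-plus-minus-eps-bound}.

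The main obstacle is precisely this uniformity-in-$t$ bookkeeping: one must translate a statement about a single moving target into a statement about a family of hitting probabilities indexed by a compact set of rescaled positions, and verify that the intervals and the interpolation in the definition of $q_{\alpha,N}$ behave well under the required change of origin and rescaling. Once the framework of Section~\ref{sec:kesten} — in particular~\eqref{eq:q-limit-y-limit} and~\eqref{eq:q-inf-convergence} — is invoked correctly, the remainder is a routine combination of the strong Markov property and the already-established limits, so no genuinely new probabilistic input is needed beyond what Lemmas~\ref{lem:recurrence-hitting}, \ref{lem:kesten-gambler}, \ref{lem:kesten-visit-before-exit} and Proposition~\ref{prop:walk-overshoot} provide.
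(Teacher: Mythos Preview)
Your overall strategy is correct, but there is a meaningful difference from the paper's proof in \emph{where} you apply the strong Markov property, and this difference is exactly what creates the uniformity difficulty you flag in your second paragraph.

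You stop the walk at $\eta_{[-x,x]}$; the paper instead stops at $\eta_{[-x,t]}$. This matters because after your stopping time the walk sits near $x$ while the target $t$ may still be as far as $(u-1)x$ away, so your invocation of Lemma~\ref{lem:recurrence-hitting} in the first paragraph is not legitimate as stated: that lemma concerns hitting a site at \emph{fixed} distance~$k$, not at distance growing linearly with the scale. You recognise this and propose to repair it via the $q_{2,N}$ equicontinuity machinery. That route can be completed, but note that after translating so that $t$ is the origin, the landing point $S_{\eta_{[-x,x]}}-t$ may be negative (when $t$ lies to the right of the overshoot), and then the relevant quantity is the left-handed analogue of $q_{2,N}$ for the walk with increment $-X$ (cf.\ Remark~\ref{rem:left-overshoots}), not $q_{2,N}$ itself. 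Since symmetry of $X$ is not assumed in the finite-variance setting, this needs to be said.

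The paper sidesteps all of this by stopping at $\eta_{[-x,t]}$. Overshoot tightness (Proposition~\ref{prop:walk-overshoot}\ref{prop:walk-overshoot-ii}) then places the walk within a \emph{bounded} distance $B$ to the right of $t$, so Lemma~\ref{lem:recurrence-hitting} applies directly, and a second application of Lemma~\ref{lem:kesten-gambler}\ref{lem:kesten-gambler-i} to the interval $[-x,t]$ (uniformly over $t\in[x,ux]$ by the monotonicity $\{S_{\eta_{[-x,t]}}>t\}\supseteq\{S_{\eta_{[-x,ux]}}>ux\}$) gives $\bP_0(S_{\eta_{[-x,t]}}>t)\geq \tfrac{1}{1+u}-\eps$. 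No equicontinuity is needed. Your upper bound and the treatment of $p^-$ match the paper.
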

\begin{proof}
First observe that, by the gambler's ruin asymptotics in
Lemma~\ref{lem:kesten-gambler}\ref{lem:kesten-gambler-i},
\begin{equation}
\label{eq:exit-balance}
\lim_{x \to \infty}\bP_0 \bigl( S_{\eta_{[-x,x]}} > x\bigr) = 1/2.
\end{equation}
Consequently, for every $s >1$, we have the upper bound 
\begin{equation}
    \label{eq:p-plus-upper}
 \limsup_{x \to \infty} \sup_{t \in \Z} \bP_0 \bigl(  S_{\eta_{[-x,x]}} > x, \, T_t < \eta_{[- x,s x]} \bigr) \leq \frac{1}{2} .
\end{equation}
To obtain a lower bound, consider $t \in \Z \cap[x,ux]$, and write, for $B \in \RP$,
\begin{align*}
\bP_0 \bigl(  S_{\eta_{[-x,x]}} > x, \, T_t < \eta_{[- x,s x]} \bigr)
& \geq \bE_0 \Bigl[ \bP_0 \bigl(  T_t < \eta_{[- x,s x]} \bigmid \cF_{\eta_{[-x,t]}} \bigr) \1{ t < S_{\eta_{[-x,t]}} < t +B } \Bigr],
    \end{align*}
    where, as in \S\ref{sec:kesten},
    $\cF_n = \sigma (S_0,S_1, \ldots, S_n)$, with respect to which $\eta_{[-x,t]}$ is a stopping time. 
    By Lemma~\ref{prop:walk-overshoot}\ref{prop:walk-overshoot-ii} (tightness of the overshoots)
we have that, for every $\eps>0$,
we may choose $B$ large enough so that $\sup_{t \in \Z} \bP_0 ( S_{\rho_t} \geq t + B ) \leq \eps$;
fix $\eps >0$ and such a $B$.
    By the strong Markov property, 
    it holds that, on $\{  t < S_{\eta_{[-x,t]}} < t +B \}$,
\begin{align*}
\min_{t \in [x,ux]} \bP_0 \bigl(  T_t < \eta_{[- x,s x]} \bigmid \cF_{\eta_{[-x,t]}} \bigr)
& \geq \min_{t \in [x,ux]}  \min_{y \in [t,t+B] \cap \Z} \bP_y  \bigl(  T_t < \eta_{[- x,s x]} \bigr)\\
& \geq \min_{t \in [x,ux]} \min_{y \in [0,B] \cap \Z} \bP_y  \bigl(  T_0 < \eta_{[-x-t ,sx-t]} \bigr) \\
& \geq \min_{y \in [0,B] \cap \Z} \bP_y  \bigl(  T_0 < \eta_{[-x,(s-u) x]} \bigr) ,\end{align*}
since $\eta_{[-x,(s-u)x]} \leq \eta_{[-x-t,sx-t]}$ for every $t \in [x,ux]$. This last bound
tends to~$1$
as $x \to \infty$, provided $s > u >1$,
by the local hitting property, Lemma~\ref{lem:recurrence-hitting}. 
Hence for every $\eps>0$,
and every $s > u >1$, for all $x$ large enough 
\begin{align*}
\min_{t \in [x,ux]}  \bP_0 \bigl(  S_{\eta_{[-x,x]}} > x, \, T_t < \eta_{[- x,s x]} \bigr)
& \geq \min_{t \in [x,ux]}  \bP_0 \bigl( t < S_{\eta_{[-x,t]}} < t +B \bigr) -\eps.
    \end{align*}
    Recalling from~\eqref{eq:rho_x-def} and~\eqref{eq:T-eta-S} that $\rho_t = \eta_{(-\infty, t]}$, we have 
\begin{align*}
\bP_0 \bigl( t < S_{\eta_{[-x,t]}} < t +B \bigr) & = 
\bP_0 \bigl(  S_{\eta_{[-x,t]}} > t , \, S_{\rho_t} < t + B \bigr) \\
& \geq     \bP_0 \bigl(  S_{\eta_{[-x,t]}} > t \bigr) - \bP_0 ( S_{\rho_t} \geq t + B ) \\
& \geq     \bP_0 \bigl(  S_{\eta_{[-x,t]}} > t \bigr) - \eps, 
\end{align*}
by choice of~$B$. 
Since $\{ S_{\eta_{[-x,t+1]}} \geq t+1 \} \subseteq \{ S_{\eta_{[-x,t]}} \geq t \}$ for every $t \in \ZP$, 
 another application of the gambler's ruin asymptotics in
Lemma~\ref{lem:kesten-gambler}\ref{lem:kesten-gambler-i} shows that, 
for every $\eps >0$ and $u >1$, for all $x$ large enough,
\[ 
\min_{t \in [x,ux]} \bP_0 \bigl(  S_{\eta_{[-x,t]}} > t \bigr) 
\geq 
\bP_0 \bigl(  S_{\eta_{[-x,ux]}} > ux \bigr) \geq \frac{1}{1+u} - \eps.
\]
Thus we conclude that for every $\eps >0$ and $s > u >1$, for all $x$ large enough,
\[ 
\min_{t \in [x,ux]}  \bP_0 \bigl(  S_{\eta_{[-x,x]}} > x, \, T_t < \eta_{[- x,s x]} \bigr)
\geq  \frac{1}{1+u} - \eps.
\]
In particular, we can choose $u>1$ close enough to~$1$ so that this last probability is arbitrarily close
to $1/2$, which combines with~\eqref{eq:p-plus-upper} to conclude the proof of the statement for $p^+$ in~\eqref{eq:p-plus-minus-eps-bound}.
The proof of the statement for $p^-$ is analogous.
\end{proof}

The following 
high-probability statements
are obtained from the probability bounds in Lemma~\ref{lem:t_visit_good_prob}
via binomial concentration, {giving a formal version
of the finite-variance case of the heuristic in point 3 in the outline in~\S\ref{sec:inner-radius-heuristics}}.

\begin{lemma}
\label{lem:single-concentration}
Suppose that~\eqref{ass:irreducible} holds,   $\Exp ( X^2) < \infty$, and $\Exp X =0$. There exists $\eps_0 \in (0,1)$ such that the following holds. 
Take $\eps \in (0,\eps_0)$, and let $u_\eps > 1$ be as given in Lemma~\ref{lem:t_visit_good_prob}.
Then, for every $u \in (1, u_\eps)$ and every $s > u$,
there exist $c>0$ (depending on $\eps$) and $x_0 >0$ such that, for all $x>x_0$, 
with $k_x: = \lceil (2 + 7\eps) (s-1)x \rceil$, 
\begin{equation}
\label{eq:concentration-single-t-plus}
    \max_{t \in \Z \cap [ x,ux]} \Pr\left(K^+_{\sigma_x,k_x}(t,x,s)<(s-1)x\right)  \leq \re^{-c(s-1)x},
    \end{equation}
    and 
    \begin{equation}\label{eq:concentration-single-t-minus}
        \max_{t \in \Z \cap [ x,ux]} \Pr \left( K^-_{\sigma_x,k_x}(-t,x,s)< (s-1)x\right)\leq \re^{-c(s-1)x}. 
    \end{equation}
\end{lemma}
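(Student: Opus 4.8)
The plan is to combine the binomial representations~\eqref{eq:bin-plus} and~\eqref{eq:bin-minus} with the probability estimate of Lemma~\ref{lem:t_visit_good_prob} and an elementary Chernoff bound. The first step is to justify that the randomly-shifted counts are still binomial: note that $\sigma_x < \infty$ a.s.\ by Proposition~\ref{prop:fill-finite-set}, and that $\sigma_x$ is a stopping time for the filtration $\cG_m := \sigma(S^{(1)}, \ldots, S^{(m)})$, since $\Cl_m$, and hence $r_m$, is a deterministic function of $S^{(1)}, \ldots, S^{(m)}$. Because $K^+_{m,k}(t,x,s)$ depends only on $S^{(m+1)}, \ldots, S^{(m+k)}$, which are independent of $\cG_m$ and whose joint law is the same for every $m$, decomposing over the events $\{\sigma_x = m\}$ shows that $K^+_{\sigma_x, k_x}(t,x,s) \sim \Bin(k_x, p^+(t,x,s))$ and, likewise, $K^-_{\sigma_x, k_x}(-t,x,s) \sim \Bin(k_x, p^-(-t,x,s))$; this is the only place where the randomness of $\sigma_x$ enters, and, while routine, it should be spelled out carefully.

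Next I would set up the concentration. By Lemma~\ref{lem:t_visit_good_prob} (the bound for the left-side quantity $p^-(-t,x,s)$ being its mirror image, obtained via the reflection $X \mapsto -X$, which preserves~\eqref{ass:irreducible}, mean zero and finite variance, possibly after shrinking $u_\eps$), for each $\eps \in (0,\eps_0)$, each $u \in (1,u_\eps)$ and each $s > u$ there is $x_1 = x_1(\eps,u,s)$ such that
\[ \inf_{t \in \Z \cap [x,ux]} p^+(t,x,s) \geq \tfrac12 - \eps \quad\text{and}\quad \inf_{t \in \Z \cap [x,ux]} p^-(-t,x,s) \geq \tfrac12 - \eps , \quad\text{for all } x > x_1 . \]
With $k_x = \lceil (2+7\eps)(s-1)x \rceil$, the mean of each of these two binomials is therefore at least $\mu_0 := (2+7\eps)(\tfrac12-\eps)(s-1)x = (1 + \tfrac32\eps - 7\eps^2)(s-1)x$. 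Taking $\eps_0 := 1/7$ ensures $\tfrac32\eps - 7\eps^2 \geq \tfrac12\eps$ for all $\eps \in (0,\eps_0)$, so $\mu_0 \geq (1 + \tfrac12\eps)(s-1)x$, and consequently the threshold $(s-1)x$ lies at or below $(1-\delta)\mu_0$, where $\delta = \delta(\eps) := \eps/(2+\eps) \in (0,1)$.

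Finally, the multiplicative Chernoff bound for the lower tail --- $\Pr(W \leq (1-\delta)\nu) \leq \exp(-\delta^2\nu/2)$ whenever $W \sim \Bin(k,p)$, $\Exp W = kp \geq \nu$ and $\delta \in (0,1)$ --- applied with $W = K^+_{\sigma_x, k_x}(t,x,s)$, $\nu = \mu_0$ and the inequality $(s-1)x \leq (1-\delta)\mu_0$ from the previous step, gives
\[ \Pr\!\bigl(K^+_{\sigma_x, k_x}(t,x,s) < (s-1)x\bigr) \leq \exp\!\bigl(-\tfrac12\delta(\eps)^2(1+\tfrac12\eps)(s-1)x\bigr) , \]
and the same bound holds with $K^-_{\sigma_x, k_x}(-t,x,s)$ in place of $K^+$. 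The right-hand side does not depend on $t$, so with $c := \tfrac12\delta(\eps)^2(1+\tfrac12\eps) > 0$ and $x_0 := x_1$, taking the maximum over $t \in \Z \cap [x,ux]$ yields~\eqref{eq:concentration-single-t-plus} and~\eqref{eq:concentration-single-t-minus} for all $x > x_0$. The only real obstacle is bookkeeping: the coefficient $2+7\eps$ in $k_x$ must be large enough, relative to the deficit $\eps$ in $p^\pm \geq \tfrac12 - \eps$, to leave a genuine multiplicative gap $\delta(\eps)$ between the mean and the target $(s-1)x$ --- this is exactly what dictates the threshold $\eps_0$ --- after which only the elementary binomial tail estimate is needed, all the probabilistic substance being already contained in Lemma~\ref{lem:t_visit_good_prob}.
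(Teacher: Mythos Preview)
Your proposal is correct and follows essentially the same approach as the paper: bound the binomial mean from below using Lemma~\ref{lem:t_visit_good_prob} and apply a Chernoff lower-tail estimate. You supply more detail than the paper does---an explicit $\eps_0=1/7$, an explicit gap $\delta(\eps)$, and the justification that $K^\pm_{\sigma_x,k_x}$ is genuinely binomial despite the random shift $\sigma_x$---but the argument is the same.
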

\begin{proof}
Standard Chernoff bounds for binomial large deviations
(see e.g.~\cite[p.~16]{penrose}) say that if $X \sim \Bin (n,p)$ and $a \in (0,1)$, then $\Pr ( X < a np ) \leq \exp ( - c_a np )$, where $c_a >0$. 
    Recall from~\eqref{eq:bin-plus} that $K^+_{\sigma_x,k}(t,x,s)$ follows a binomial distribution with mean $k p^+(t,x,s)$. Choose
    $k_x = \lceil (2 + 7\eps) (s-1)x \rceil$. 
    For $u \in (1,u_\eps)$, $s>u$, and $x$ large enough,
     Lemma~\ref{lem:t_visit_good_prob} shows that, for all $t \in \Z \cap [ x,ux]$,
     \[ k_x p^+(t,x,s) \geq  (2+7\eps) (s-1)x \cdot \left( \frac{1}{2} - \eps \right)
     > (1+ \eps) (s-1)x ,\]
     for all $\eps \in (0,\eps_0)$ sufficiently small. The binomial Chernoff bound stated above then yields~\eqref{eq:concentration-single-t-plus},
     where $c >0$ depends only on $\eps$. A similar argument yields~\eqref{eq:concentration-single-t-minus}.
    \end{proof}

    {Now we can complete the proof of Proposition~\ref{prop:inner-radius-inverse}.
    To do so, we combine Lemmas~\ref{lem:over-filling} and~\ref{lem:single-concentration}
    to achieve the formal version of point 5 in the outline in~\S\ref{sec:inner-radius-heuristics}
    to give an upper bound as previewed in equation~\eqref{eq:outline-bound} with $q \approx 1/2$ and $s \approx u \approx 1$ chosen appropriately.}

\begin{proof}[Proof of Proposition~\ref{prop:inner-radius-inverse}]
Let $\eps_0 >0$ be the constant from Lemma~\ref{lem:single-concentration}.
Take $\eps \in (0,\eps_0)$, and let $u_\eps > 1$ be as given in Lemma~\ref{lem:t_visit_good_prob}.
It follows from Lemma~\ref{lem:single-concentration} that,
for every $u \in (1, u_\eps)$ and every $s > u$,
there exist $c>0$ (depending on $\eps$) and $x_0 >0$ such that, for all $x>x_0$, 
with $k_x = \lceil (2 + 7\eps) (s-1)x \rceil$, 
\begin{align*}
& {}    \Pr \left( \min_{t \in \Z \cap (x,ux]} \min \left( K^+_{\sigma_x,k_x}(t,x,s),  K^-_{\sigma_x,k_x}(-t,x,s) \right) < \lceil (s-1)x \rceil \right) \\
& {} \quad {} \leq 2 u x \max_{t \in \Z \cap (x,ux]} \max \left\{  \Pr\left(K^+_{\sigma_x,k_x}(t,x,s)<(s-1)x\right) , \Pr\left( K^-_{\sigma_x,k_x}(-t,x,s)< (s-1)x \right) \right\} \\
    & {} \quad {} \leq 2 u x \re^{-c(s-1)x}.
\end{align*}
In particular, we obtain
\[ \sum_{x \in \N} 
 \Pr \left( \min_{t \in \Z \cap (x,ux]} \min \left( K^+_{\sigma_x,k_x}(t,x,s),  K^-_{\sigma_x,k_x}(-t,x,s) \right) < \lceil (s-1)x \rceil \right) < \infty.
\]
It follows from Lemma~\ref{lem:over-filling} and the 
Borel--Cantelli lemma that, almost surely, for all but finitely many $x \in \N$, \eqref{eq:K-large} holds and thus 
$\sigma_{ux} \leq \sigma_x + k_x$.
In particular, considering the subsequence $x = u^m$, it follows that there is
a (random, a.s.~finite) $m_0 \in \N$ such that
\[
\sigma_{u^{m+1}} - \sigma_{u^m}  \leq k_{u^m} 
\leq 1+ (2 + 7\eps) (s-1) u^m, \text{ for all } m \geq m_0.
\]
Consequently, for all $m \geq m_0$,
\begin{align}
    \sigma_{u^m} & = \sigma_{u^{m_0}} + \sum_{\ell = m_0}^{m-1} \left( \sigma_{u^{\ell+1}}-\sigma_{u^\ell} \right) \nonumber\\
    & \leq  \sigma_{u^{m_0}} + m+ (2 + 7\eps) (s-1) \sum_{\ell =0}^{m-1}  u^\ell \nonumber\\
    \label{eq:sigma_x-growth-bound}
&\leq  \sigma_{u^{m_0}} + m+ (2 + 7\eps) \left( \frac{s-1}{u-1} \right) u^{m}.
\end{align}
For fixed $u \in (1,u_\eps)$ and every $x \in \N$, there exists $m_x \in \ZP$ such that $u^{m_x} \leq x < u^{m_x + 1}$;
note that $m_x \to \infty$ as $x \to \infty$.
Since $\sigma_x \leq \sigma_{u^{m_x+1}}$, it follows from \eqref{eq:sigma_x-growth-bound} that, 
for each  $u \in (1,u_\eps)$ and $s >u$,
\begin{align}
\label{eq:s-u-algebra}
\limsup_{x \to \infty} 
\frac{\sigma_x}{x} & \leq \limsup_{x \to \infty}\frac{\sigma_{u^{m_x + 1}}}{u^{m_x}} \nonumber\\
& \leq (2 + 7\eps) \left( \frac{s -1}{u-1} \right) u +  \limsup_{x \to \infty} \frac{\sigma_{u^{m_0}} + m_x+1}{u^{m_x}}  \nonumber\\
& =  (2 + 7\eps) \left( \frac{s -1}{u-1} \right) u.
\end{align}    
Since, for fixed $\eps \in (0,\eps_0)$, the choices of $s$ and $u$ were arbitrary subject to $s > u$ and $u \in (1,u_\eps)$, it follows
from~\eqref{eq:s-u-algebra} that 
$\limsup_{x \to \infty} (\sigma_x / x) \leq 2 + 7\eps$, a.s.
Since $\eps \in (0,\eps_0)$ was arbitrary, this completes the proof.
 \end{proof}

\begin{proof}[Proof of Theorem~\ref{thm:idla-light-tail}]
As explained in \S\ref{sec:inner-radius-heuristics}, Theorem~\ref{thm:idla-light-tail}
follows from the 
bound~\eqref{eq:r-R-trivial} together with the bound from Proposition~\ref{prop:inner-radius-inverse} and the inversion described by~\eqref{eq:inversion}.
\end{proof}

\subsection{Infinite variance}
\label{sec:inner-radius-infinite-variance-upper-bound}

{In the infinite-variance case, 
the present section deals with the proof of the upper bound on $\sigma_x/x$ given in
Proposition~\ref{prop:upper-bound_infinite-variance}.} For Theorem~\ref{thm:idla-heavy-tail}
we also need a \emph{lower} bound $\sigma_x/x >c$ with $c>2$, which we establish in \S\ref{sec:inner-radius-infinite-variance-lower-bound} below. 
%This section is concerned with the upper bound on $\sigma_x/x$ required for Proposition~\ref{prop:upper-bound_infinite-variance}.

The structure of this section parallels that of \S\ref{sec:inner-radius-finite-variance}. The strategy is to once more apply
 Lemma~\ref{lem:over-filling},
but now  the asymptotics of $p^+$ and $p^-$
defined in~\eqref{eq:ptx-plus-def} and~\eqref{eq:ptx-minus-def}, respectively,
are different. 
Define
\begin{equation}
    \label{eq:q-lower-bound}
    \uq (u , s) := \left( \frac{\alpha-1}{1+u} \right) \left( \frac{s-u}{s} \right)^{1-\frac{\alpha}{2}} . 
    \end{equation}
 {The following result takes the place of Lemma~\ref{lem:t_visit_good_prob} in the infinite-variance setting, to provide a formal statement of the heuristic in point 2b in the outline in~\S\ref{sec:inner-radius-heuristics}. Again, gambler's ruin estimates mean that there is a roughly $1/2$ chance that a walk exits $[-x,x]$ on the right, but now the lack of tightness of the overshoot means that we cannot guarantee to hit a nearby point before going far away; instead we use Kesten's hitting-before-exit  estimates from~\S\ref{sec:kesten} to obtain the following lower bounds.}

\begin{lemma}
\label{lem:t_visit_good_prob_infinite_variance}
Suppose that~\eqref{ass:irreducible} holds, 
and that~\eqref{ass:stable-alpha} holds with $\alpha \in (1,2)$.
For
 every $u > 1$ and every $s > u$,
with $\uq$ as defined in equation~\eqref{eq:q-lower-bound}, it holds that
\begin{equation}
\label{eq:p-plus-minus-eps-bound_infinite_variance}
   \liminf_{x \to \infty}     \min_{t \in \Z \cap [x,ux]}   p^\pm (t,x,s) \geq  
   \uq (u,s) .
    \end{equation}
    \end{lemma}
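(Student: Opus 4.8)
The plan is to reduce $p^+(t,x,s)$ to a normalized hitting-before-exit probability of the type analysed by Kesten, apply the convergence in Lemma~\ref{lem:kesten-visit-before-exit}\ref{lem:kesten-visit-before-exit-ii}, and then invoke the explicit lower bound of Lemma~\ref{lem:Bounds_on_q}. I give the argument for $p^+$; the statement for $p^-$ follows by the reflection symmetry $X \eqd -X$ provided by~\eqref{ass:stable-alpha} (indeed $p^-(-t,x,s)=p^+(t,x,s)$ for every $t$).

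First I would note that, for $t \in \Z \cap (x,ux]$, one has $x < t < sx$ (as $t \le ux < sx$), so for the walk started at the origin to hit $t$ before exiting $[-x,sx]$ it must first leave $[-x,x]$, and it cannot do so on the left without already leaving $[-x,sx]$; hence $\{T_t < \eta_{[-x,sx]}\} \subseteq \{S_{\eta_{[-x,x]}} > x\}$, and therefore $p^+(t,x,s) = \bP_0(T_t < \eta_{[-x,sx]})$. Shifting by $-t$ and using spatial homogeneity together with $X \eqd -X$, I would rewrite this as
\[
\bP_0\bigl(T_t < \eta_{[-x,sx]}\bigr) = \bP_t\bigl( T_0 < \eta_{[-(sx-t),\, x+t]} \bigr) = q_{\alpha,N_t}\bigl(t/N_t;\, c_t\bigr),
\]
in the notation of~\eqref{Eq: def_q_N}, with $N_t := x+t$ and $c_t := (sx-t)/(x+t)$. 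Writing $t=\theta x$ with $\theta \in (1,u]$ gives $t/N_t = \theta/(1+\theta) \in [1/2,\, u/(1+u)] \subset [0,1)$ and $c_t = (s-\theta)/(1+\theta)$, which is decreasing in $\theta$ and hence $\ge c_0 := (s-u)/(1+u) > 0$ for all such $t$; by the monotonicity~\eqref{eq:q-monotone}, $q_{\alpha,N_t}(t/N_t; c_t) \ge q_{\alpha,N_t}(t/N_t; c_0)$.

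Now, as $x \to \infty$, $N_t = x+t \ge 2x \to \infty$ uniformly over $t \in (x,ux]$, and the arguments $t/N_t$ all lie in the fixed compact set $A := [1/2,\, u/(1+u)] \subset [0,1)$. Hence the uniform-on-compacts convergence $q_{\alpha,N}(\,\cdot\,;c_0) \to q_\alpha(\,\cdot\,;c_0)$ recorded in~\eqref{eq:q-inf-convergence} (which rests on Lemma~\ref{lem:kesten-visit-before-exit}\ref{lem:kesten-visit-before-exit-ii} and the equicontinuity in Lemma~\ref{lem:kesten-equicontinuity}\ref{lem:kesten-equicontinuity-ii}) yields $\liminf_{x\to\infty}\min_{t\in\Z\cap(x,ux]} p^+(t,x,s) \ge \inf_{y\in A} q_\alpha(y;c_0)$. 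To conclude I would estimate the right-hand side using~\eqref{eq:lower_bound_q}: for $y \in A$,
\[
q_\alpha(y;c_0) \ \ge\ (\alpha-1)\,c_0^{\,1-\frac{\alpha}{2}}\,(c_0+y)^{\frac{\alpha}{2}-1}(1-y),
\]
and, since $\tfrac{\alpha}{2}-1<0$, the right-hand side is non-increasing in $y$ on $[0,1)$, so its infimum over $A$ is attained at $y = u/(1+u)$, where $c_0+y = s/(1+u)$ and $1-y = 1/(1+u)$; substituting and collecting the powers gives precisely $\uq(u,s)$ from~\eqref{eq:q-lower-bound}. This establishes~\eqref{eq:p-plus-minus-eps-bound_infinite_variance} for $p^+$, and the case of $p^-$ is identical after reflection.

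The only real difficulty is bookkeeping: both the rescaled interval ratio $c_t$ and the window size $N_t$ depend on $t$, so one cannot directly apply Lemma~\ref{lem:kesten-visit-before-exit} with a single fixed $c$. The resolution is to fix $c$ at the uniform worst case $c_0=(s-u)/(1+u)$ by monotonicity, and to use Kesten's equicontinuity to pass to the limit uniformly over the fixed compact range of $t/N_t$. Once this is in place the remainder — identifying the minimizing $y$ in~\eqref{eq:lower_bound_q} and checking the resulting constant equals $\uq(u,s)$ — is routine algebra. A minor point to record is the reduction $p^+(t,x,s)=\bP_0(T_t < \eta_{[-x,sx]})$, which uses that $t$ lies strictly between the inner endpoint $x$ and the outer endpoint $sx$.
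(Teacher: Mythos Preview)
Your proof is correct and follows essentially the same route as the paper: reduce $p^+(t,x,s)$ to $\bP_0(T_t<\eta_{[-x,sx]})$, rewrite via translation and the symmetry in~\eqref{ass:stable-alpha} as $q_{\alpha,x+t}\bigl(\tfrac{t}{x+t};\tfrac{sx-t}{x+t}\bigr)$, use the monotonicity~\eqref{eq:q-monotone} to fix $c$ at $(s-u)/(1+u)$, pass to the limit uniformly via~\eqref{eq:q-inf-convergence}, and evaluate the bound~\eqref{eq:lower_bound_q} at $y=u/(1+u)$. Your write-up is in fact slightly more careful than the paper's in two places: you justify the reduction $p^+(t,x,s)=\bP_0(T_t<\eta_{[-x,sx]})$ explicitly (noting that a left exit from $[-x,x]$ is already an exit from $[-x,sx]$), and you record that the right-hand side of~\eqref{eq:lower_bound_q} is non-increasing in $y$, pinning the minimizer.
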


In what follows, as in \S\ref{sec:inner-radius-finite-variance}, we are free to choose $s$ and $u$ such that $s > u > 1$, and,
as before, for a fixed probability lower bound on $p^\pm$, the optimal choice would be to take $u \approx 1$ and $s \approx u$. However, the bound in~\eqref{eq:p-plus-minus-eps-bound_infinite_variance} has $\uq (u,s) \to 0$ as $s-u \to 0$,
and in fact 
this is inevitable in the infinite-variance case, since the overshoot is not tight. Thus we must keep $s -u$ strictly positive, and then (compare~\eqref{eq:s-u-algebra}) one must also keep $u -1$ strictly positive. The balance is then to choose $u, s$ so that $s-u$ and $u-1$ are positive, but not too large. The optimal choice can be found by some calculus, but provides a somewhat complicated formula. As our aim here is not to obtain the optimal constants, but to provide reasonable bounds that capture important asymptotics (such as $\alpha \uparrow 2$ behaviour) we instead will choose $u = 1 +h$, $s=u+h^2=1+h+h^2$. Then
    \begin{align}
    \label{eq:uq-choice}
        \uq (1+h , 1+h+h^2) = \frac{(\alpha -1)}{(2 + h)} \frac{h^{2-\alpha}}{(1 + h + h^2)^{1-\alpha/2}}. 
    \end{align}
    The bound~\eqref{eq:uq-choice} has the property that it goes to $1/2$ as $h \downarrow 0$ and $\alpha \uparrow 2$ appropriately.
    
    \begin{proof}[Proof of Lemma~\ref{lem:t_visit_good_prob_infinite_variance}]
        Note that for $t \in \Z \cap [x,ux]$ we have 
        \begin{align*} p^+(t,x,s) &
        = \Pr_0 (T_t < \eta_{[-x,sx]} )
        = \Pr_{-t} (T_0 < \eta_{[-x-t,sx-t]} )  = \Pr_{t} (T_0 < \eta_{[t-sx,x+t]} ),\end{align*}
        by a change of sign and the symmetry hypothesis in~\eqref{ass:stable-alpha}.  
        Hence, using the definition of  \eqref{Eq: def_q_N}, we obtain
        \[ p^+(t,x,s)  = q_{\alpha,x+t} \left( \frac{t}{x+t} ; \frac{sx-t}{x+t} \right) .\]
Note that for $ t \in [x,ux] \cap \Z$ we have $\frac{sx-t}{x+t} \geq \frac{s-u}{1+u}$ and thus, by the monotonicity property \eqref{eq:q-monotone} we obtain that
\[
q_{\alpha,x+t} \left( \frac{t}{x+t} ; \frac{sx-t}{x+t} \right) \geq q_{\alpha,x+t} \left( \frac{t}{x+t} ; \frac{s-u}{1+u} \right).
\]

It follows  from \eqref{eq:q-limit-y-limit} and \eqref{eq:q-inf-convergence}, which are consequences of
Kesten's convergence and equicontinuity results as presented in Lemmas~\ref{lem:kesten-visit-before-exit} and~\ref{lem:kesten-equicontinuity} that
\[ 
\lim_{x \to \infty} \min_{t \in \Z \cap [x,ux]}
q_{\alpha,x+t} \left( \frac{t}{x+t} ; \frac{s-u}{1+u} \right)
= \inf_{y \in [ \frac{1}{2}, \frac{u}{1+u}]} q_\alpha \left(y ; \frac{s-u}{1+u} \right)
.
\]
Using the lower bound from equation~\eqref{eq:lower_bound_q} {and observing that $\frac{\alpha}{2}-1<0$} we get
\begin{align*}
\inf_{y \in [ \frac{1}{2}, \frac{u}{1+u}]} q_\alpha \left(y ; \frac{s-u}{1+u} \right)
& \geq (\alpha-1) \left( \frac{s-u}{1+u} \right)^{1-\frac{\alpha}{2}} 
\inf_{y \in [ \frac{1}{2}, \frac{u}{1+u}]} \left[ \left( y + \frac{s-u}{1+u}\right)^{\frac{\alpha}{2}-1}(1-y)  \right] \\ 
& = (\alpha-1) \left( \frac{s-u}{1+u} \right)^{1-\frac{\alpha}{2}} 
\left(  \frac{s}{1+u}\right)^{\frac{\alpha}{2}-1} \frac{1}{1+u},
\end{align*}
which is equal to $\uq (u,s)$ as defined in equation~\eqref{eq:q-lower-bound}.
{The same result holds for $p^-$ by the symmetry hypothesis~\eqref{ass:stable-alpha}.}
    \end{proof}

The next result will substitute for Lemma~\ref{lem:single-concentration}
in the infinite-variance setting, 
 {giving a formal version
of the infinite-variance case of the heuristic in point 3 in the outline in~\S\ref{sec:inner-radius-heuristics}. The proof again uses binomial concentration with the distributional equalities in~\eqref{eq:bin-plus} and~\eqref{eq:bin-minus}, this time with the probability
estimates from Lemma~\ref{lem:t_visit_good_prob_infinite_variance}}.

\begin{lemma}
\label{lem:single-concentration-stable}
Suppose that~\eqref{ass:irreducible} holds and that~\eqref{ass:stable-alpha} holds with $\alpha \in (1,2)$. Take $u,s \in \R$, such that $1<u<s$ and let $q: = \uq(u,s)$ as given in equation~\eqref{eq:q-lower-bound}.
Then, there exist $\eps_0 \in (0,1)$, $K>0$, and $x_0 >0$ such that the following holds. 
For all $x>x_0$, every $\eps \in (0,\eps_0)$, 
with $k_x : = \lceil (q^{-1} + K\eps) (s-1)x \rceil$, there is $c >0$ for which
    \begin{equation}
    \label{eq:concentration-single-t-plus-stable}
    \max_{t \in \Z \cap [ x,ux]} \Pr\left(K^+_{\sigma_x,k_x}(t,x,s)<(s-1)x\right)  \leq \re^{-c(s-1)x},
    \end{equation}
    and 
    \begin{equation}\label{eq:concentration-single-t-minus-stable}
        \max_{t \in \Z \cap [ x,ux]} \Pr \left( K^-_{\sigma_x,k_x}(-t,x,s)< (s-1)x\right)\leq \re^{-c(s-1)x}. 
    \end{equation}
\end{lemma}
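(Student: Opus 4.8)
The plan is to mirror the proof of Lemma~\ref{lem:single-concentration} from the finite-variance case. By~\eqref{eq:bin-plus} and~\eqref{eq:bin-minus}, the quantities $K^+_{\sigma_x,k}(t,x,s)$ and $K^-_{\sigma_x,k}(-t,x,s)$ are binomial with parameters $k$ and the exit-and-hit probabilities $p^\pm$ of~\eqref{eq:ptx-plus-def}--\eqref{eq:ptx-minus-def}, so the strategy is to choose $k = k_x$ large enough that these binomial means comfortably exceed the target $(s-1)x$, and then apply a standard binomial large-deviation (Chernoff) bound. The one substantive change from the finite-variance argument is that the lower bound on $p^\pm(t,x,s)$ is now the non-trivial constant $\uq(u,s)$ provided by Lemma~\ref{lem:t_visit_good_prob_infinite_variance}, rather than a quantity close to $\tfrac12$.

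Concretely, I would fix $u,s$ with $1 < u < s$ and set $q := \uq(u,s) \in (0,\tfrac12)$. By Lemma~\ref{lem:t_visit_good_prob_infinite_variance}, for any given $\eps > 0$ one has $\min_{t \in \Z \cap [x,ux]} p^\pm(t,x,s) \geq (1-\eps)q$ for all $x$ sufficiently large. Setting $k_x := \lceil (q^{-1}+K\eps)(s-1)x \rceil$, the binomial mean is then at least $(q^{-1}+K\eps)(1-\eps)q(s-1)x = \bigl(1 + (Kq-1)\eps - Kq\eps^2\bigr)(s-1)x$; choosing $K$ large enough in terms of $q$ alone (for instance $K = 2/q$) and $\eps_0$ correspondingly small, this is at least $(1+c_1\eps)(s-1)x$ for every $\eps \in (0,\eps_0)$, with $c_1 = c_1(q) > 0$ fixed, uniformly over $t \in \Z \cap [x,ux]$. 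The key point here is that $\uq(u,s)$ does not depend on $\eps$, so $K$ and $\eps_0$ may indeed be fixed in terms of $u,s$ only, ahead of $\eps$.

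The final step is to invoke the same binomial Chernoff estimate already used for Lemma~\ref{lem:single-concentration} (see~\cite[p.~16]{penrose}): if $W \sim \Bin(n,p)$ and $a \in (0,1)$, then $\Pr(W < anp) \leq \exp(-c_a np)$ with $c_a > 0$. Taking $a = (1+c_1\eps)^{-1}$ and using the mean lower bound above (together with $k_x p^\pm \geq (s-1)x$), this gives
\[ \Pr\bigl( K^+_{\sigma_x,k_x}(t,x,s) < (s-1)x \bigr) \leq \exp\bigl( -c_a\, k_x\, p^+(t,x,s) \bigr) \leq \re^{-c(s-1)x}, \]
with $c := c_a > 0$ depending only on $\eps$, uniformly over $t \in \Z \cap [x,ux]$; taking the maximum over $t$ yields~\eqref{eq:concentration-single-t-plus-stable}, and the identical argument applied to $K^-_{\sigma_x,k_x}(-t,x,s)$ (exchanging the roles of left and right exits) yields~\eqref{eq:concentration-single-t-minus-stable}.

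I do not expect a genuine obstacle: this is a routine adaptation of the proof of Lemma~\ref{lem:single-concentration}. The only care required is in bookkeeping the constants so that the margin by which $k_x p^\pm$ beats $(s-1)x$ is a fixed positive multiple of $\eps$ (this is what keeps the Chernoff exponent $c = c(\eps)$ positive and lets $K$, $\eps_0$ be chosen before $\eps$), and in observing that no union bound over $t$ is needed at this stage, since the lower bound on $p^\pm$ from Lemma~\ref{lem:t_visit_good_prob_infinite_variance} is already uniform in $t$.
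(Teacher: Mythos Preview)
Your proposal is correct and follows essentially the same route as the paper's proof: both fix $q=\uq(u,s)$, use the uniform lower bound on $p^\pm$ from Lemma~\ref{lem:t_visit_good_prob_infinite_variance} to force $k_x p^\pm \geq (1+\text{const}\cdot\eps)(s-1)x$, and then apply the binomial Chernoff bound exactly as in Lemma~\ref{lem:single-concentration}. The only cosmetic difference is that the paper writes the lower bound additively as $p^\pm > q-\eps$ and chooses $K,\eps_0$ via the inequality $(q^{-1}+K\eps)(q-\eps)>1+\eps$, whereas you use the equivalent multiplicative form $p^\pm \geq (1-\eps)q$ with the explicit choice $K=2/q$.
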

\begin{proof}
Given $u,s$ with $1<u<s$, take $q := \uq(u,s)>0$ where $\uq (u,s)$
is defined in equation~\eqref{eq:q-lower-bound}. Then there exist $\eps_0>0$ and  $K \in \N$ (both depending on $q$ and hence on $u$ and $s$) such that 
\begin{equation}\label{eq:qK-control-eps}
\left(q^{-1} +K\eps\right) \cdot \left( q - \eps \right) > (1 + \eps) ,\text{ for all } \eps \in (0,\eps_0).
\end{equation}
Define $k_x = \lceil (q^{-1} + K\eps) (s-1)x \rceil$, as in the lemma, for this choice of~$K$.
Take $\eps \in (0,\eps_0)$.
By Lemma~\ref{lem:t_visit_good_prob_infinite_variance}, it holds that
$p^+(t,x,s) > \uq(u,s) - \eps$ for all $x$ large enough. Then it follows from the choice of $k_x$,
and property~\eqref{eq:qK-control-eps}, that
\begin{align*}
k_x p^+(t,x,s) &\geq  \left(q^{-1} +K\eps\right) (s-1)x \cdot \left( q - \eps \right)> (1+ \eps) (s-1)x.
\end{align*}
The binomial Chernoff bound stated in the proof of Lemma \ref{lem:single-concentration} then yields equation~\eqref{eq:concentration-single-t-plus-stable},
where $c >0$ depends only on $\eps$. A similar argument yields~\eqref{eq:concentration-single-t-minus-stable}.
\end{proof}

   {Now we can complete the proof of Proposition~\ref{prop:upper-bound_infinite-variance}.
   Similarly to the proof of Proposition~\ref{prop:inner-radius-inverse}, we combine Lemmas~\ref{lem:over-filling} and~\ref{lem:single-concentration-stable}
    to achieve the formal version of the infinite-variance case of point 5 in the outline in~\S\ref{sec:inner-radius-heuristics}
    to give an upper bound as previewed in equation~\eqref{eq:outline-bound} with $q = \uq (u,s)$ defined in equation~\eqref{eq:q-lower-bound} and $s > u > 1$ chosen appropriately as functions of $\alpha$
    as indicated in equation~\eqref{eq:uq-choice} above.}

\begin{proof}[Proof of Proposition~\ref{prop:upper-bound_infinite-variance}]
Let $1<u<s $ and $\eps_0>0$ be the constants from Lemma~\ref{lem:single-concentration-stable}.
It follows from Lemma~\ref{lem:single-concentration-stable} that for every $\eps \in (0,\eps_0)$
there exist $K>0$, depending on $q = \uq (u,s)$,  $c>0$ and $x_0 >0$, depending on $\eps$,  such that, for all $x>x_0$, 
with $k_x = \lceil (q^{-1} + K\eps) (s-1)x \rceil$, 
\begin{align*}
& {}    \Pr \left( \min_{t \in \Z \cap (x,ux]} \min \left\{ K^+_{\sigma_x,k_x}(t,x,s),  K^-_{\sigma_x,k_x}(-t,x,s) \right\} < \lceil (s-1)x \rceil \right) \\
    & {} \quad {} \leq u x \max_{t \in \Z \cap [x,ux]} \Big(  \Pr\left(K^+_{\sigma_x,k_x}(t,x,s)<(s-1)x\right) + \Pr\left( K^-_{\sigma_x,k_x}(-t,x,s)< (s-1)x \right) \Big) \\
    & {} \quad {} \leq 2 u x \re^{-c(s-1)x}.
\end{align*}
In particular, we obtain
\[ \sum_{x \in \N} 
 \Pr \left( \min_{t \in \Z \cap (x,ux]} \min \left( K^+_{\sigma_x,k_x}(t,x,s),  K^-_{\sigma_x,k_x}(-t,x,s) \right) < \lceil (s-1)x \rceil \right) < \infty.
\]
It follows from Lemma~\ref{lem:over-filling} and the 
Borel--Cantelli lemma that, almost surely, for all but finitely many $x \in \N$, equation~\eqref{eq:K-large} holds and thus 
$\sigma_{ux} \leq \sigma_x + k_x$. Proceeding as in the proof of Proposition~\ref{prop:inner-radius-inverse},
but with $k_x$ given in Lemma~\ref{lem:single-concentration} replaced by that in Lemma~\ref{lem:single-concentration-stable},
repeating the steps through equations~\eqref{eq:sigma_x-growth-bound}--\eqref{eq:s-u-algebra},
 we obtain 
\begin{align*}
\limsup_{x \to \infty} 
\frac{\sigma_x}{x} & \leq 
(q^{-1} + K\eps) \left( \frac{s -1}{u-1} \right) u, \as \end{align*}    
Here $\eps \in (0,\eps_0)$ was arbitrary, 
and $K, u, s$ do not depend on $\eps$,
so it holds that
\begin{equation}
    \label{eq:s-u-algebra-alpha}
\limsup_{x \to \infty} 
\frac{\sigma_x}{x} \leq  \frac{1}{\uq (u,s)} \left( \frac{s -1}{u-1} \right) u, \as 
\end{equation}
Then using the expression~\eqref{eq:uq-choice} 
together with equation~\eqref{eq:q-lower-bound}, it follows from~\eqref{eq:s-u-algebra-alpha} that
\begin{align}
\limsup_{x \to \infty} 
\frac{\sigma_x}{x} \leq  \frac{1+u}{\alpha -1} \left( \frac{s}{s-u} \right)^{1-\frac{\alpha}{2}} \left( \frac{s -1}{u-1} \right) u, \as \label{eq:limsup_su}
\end{align}    
Now choose $u = 1+h$ and $s=1+h+h^2$ for some $h \in [0,1]$. Then using the fact that $\alpha \in (1,2)$ and $h \in [0,1]$,
in the bound~\eqref{eq:limsup_su} we get, a.s.,
\begin{align*}
\limsup_{x \to \infty} \frac{\sigma_x}{x}
& \leq (\alpha -1)^{-1} (2+h)(1+h)^2 (1+h+h^2)^{1 -\alpha/2} h^{\alpha-2} \\
& \leq (\alpha -1)^{-1} (2+h)(1+h)^2 (1+h+h^2)^{1/2} h^{\alpha-2} \\
& \leq
(\alpha -1)^{-1} (2+h)(1+h)^2 (1+2h)^{1/2} h^{\alpha-2}  \\
& \leq
(\alpha -1)^{-1} (2+h)(1+h)^3 h^{\alpha-2}  .
\end{align*}
Now we take $h =2-\alpha$, to give
\[
\limsup_{x \to \infty} \frac{\sigma_x}{x} \leq (\alpha -1)^{-1} (4-\alpha)(3-\alpha)^3 (2-\alpha)^{\alpha-2}, \as
\]
This completes the proof, with the value for $C_\alpha'$ given in equation~\eqref{eq:C-alpha-dash}. \end{proof}

\subsection{Upper bounds: Infinite variance}
\label{sec:inner-radius-infinite-variance-lower-bound}

In this section we prove
lower bounds $\sigma_x/x >C'_\alpha$ with $C'_\alpha >2$. In particular, by the inversion argument relating $\sigma_x$ and $r_m$ presented in \eqref{eq:inversion}, we will establish the upper bound in Theorem~\ref{thm:idla-heavy-tail}.
Define $u_\alpha : (1,\infty) \to (0,1)$ by
\begin{equation}
\label{eq:u-alpha-def}
  u_\alpha (w) 
  := 2^{\frac{\alpha}{2}-1} (\alpha - 1)  \frac{ \sin (\pi \alpha/2)}{\pi}\int_{w-1}^{\infty} {\frac{\ud v}{v^{\alpha/2} (2+v)^{\alpha/2} (1+v)}}, \text{ for } w > 1.
\end{equation}

\begin{proposition}
\label{prop:inner-radius-infinite-variance-lower-i}Suppose that~\eqref{ass:irreducible} holds, 
and that~\eqref{ass:stable-alpha} holds with $\alpha \in (1,2)$.
With $C_\alpha'$ given by~\eqref{eq:C-alpha-dash}, set
\[ C_\alpha'' :=     2 + \sup \left\{  \frac{(C-2) \wedge u_\alpha ( (3/2) (C-1) )}{C+1} : C > C_\alpha' \right\}. \]
Then it holds that, a.s.,
\begin{equation}
\label{eq:sigma_liminf_infinite_variance-prop_lower}
\liminf_{x \to \infty} \frac{\sigma_x}{x} \geq C''_\alpha >2.
\end{equation}
\end{proposition}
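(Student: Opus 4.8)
The plan is to show that by the time $[-x,x]$ is first completely occupied, the cluster has already accumulated at least $\bigl(C''_\alpha-2+o(1)\bigr)x$ sites \emph{outside} $[-x,x]$; since $\#\mathfrak C_{\sigma_x}=\sigma_x+1$ and $\#(\mathfrak C_{\sigma_x}\cap[-x,x])=2x+1$, this forces $\sigma_x\ge \bigl(C''_\alpha+o(1)\bigr)x$, and then by the inversion \eqref{eq:inversion} together with the trivial bound this is exactly Proposition~\ref{prop:inner-radius-infinite-variance-lower-i}. The mechanism generating the surplus is that, in the infinite-variance regime, overshoots are genuinely macroscopic (Proposition~\ref{prop:dynkin-lamperti}): a definite fraction of walkers leap over the growing occupied block and settle a distance $\Theta(x)$ beyond it, and such a site can never be absorbed into $[-x,x]$.

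First I would bring in the upper bound already proved: fix $C>C_\alpha'$, so by Proposition~\ref{prop:upper-bound_infinite-variance} there is a.s.\ a (random, finite) $x_1$ with $\sigma_y\le Cy$ for all $y\ge x_1$; in particular $\#\mathfrak C_m\le Cy+1$ whenever $m\le\sigma_y$, so at most $(C-2)y$ cluster sites lie outside $[-y,y]$. This mass bound is what pins down how far a freshly dispatched walker must jump in order to be \emph{certain} of landing on a vacant site far to the right of the block, and it is the origin of the argument $\tfrac32(C-1)$ in the statement as well as of the truncation by $C-2$. Next, working scale by scale, I would look, for large $x$ and small $\delta>0$, at the walkers dispatched during the window between times $\sigma_x$ and $\sigma_{(1+\delta)x}$; throughout this window $[-x,x]$ is fully occupied, and the whole cluster is of extent $O(x)$ in the sense of the mass bound. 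By Kesten's gambler's-ruin estimate (Lemma~\ref{lem:kesten-gambler}\ref{lem:kesten-gambler-ii}) such a walker exits $[-x,x]$ on the right with probability $\to\tfrac12$; conditionally on this, the relevant event is that its exit of $[-x,x]$ is by a jump carrying it well beyond the block, which is precisely an overshoot-type event governed by Dynkin--Lamperti (Proposition~\ref{prop:dynkin-lamperti}), but it must be chained through the interval-exit and hitting estimates of Lemmas~\ref{lem:kesten-gambler} and~\ref{lem:kesten-visit-before-exit} (and their equicontinuity, Lemma~\ref{lem:kesten-equicontinuity}) rather than read off the half-line version directly. The lower bound one extracts for the probability of settling at a site beyond $(1+\tfrac32(C-1))x$ is $u_\alpha(\tfrac32(C-1))$ from \eqref{eq:u-alpha-def}, and the bookkeeping below is what converts this, together with the $C-2$ cap and the normalisation by $C+1$, into the displayed expression for $C''_\alpha$.

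With such a per-walker estimate in hand the remaining steps are of the same flavour as Section~\ref{sec:inner-radius-finite-variance}, run in reverse. The events ``walker $j$ settles at a site beyond $(1+\tfrac32(C-1))x$'' are, for the indefinitely extended walks, i.i.d.\ with probability bounded below by a positive constant, so a Chernoff bound (as in the proof of Lemma~\ref{lem:single-concentration}) shows that, outside an exponentially small probability, a positive fraction of the window's walkers produce genuinely new surplus sites; since at most $O(\delta x)$ of the window's walkers can fill the new annulus $[-(1+\delta)x,(1+\delta)x]\setminus[-x,x]$, this yields $\sigma_{(1+\delta)x}-\sigma_x\ge (1+\text{const})\cdot 2\delta x$ up to lower-order terms. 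Telescoping along the geometric subsequence $x_k=(1+\delta)^k$ and invoking Borel--Cantelli gives $\liminf_{x\to\infty}\sigma_x/x\ge C''_\alpha$ a.s., where the supremum over $C>C_\alpha'$ (and the optimisation $\delta\downarrow0$) appears because the constant depends on the chosen trapping level $C$; it exceeds $2$ because $u_\alpha(\cdot)>0$ throughout $\alpha\in(1,2)$.

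The main obstacle I expect is the geometry of the cluster, which is not an interval: a walker can settle in a gap opened up by an earlier leap, so one must argue carefully that the ``far-settling'' walkers identified above add \emph{new} mass strictly beyond $(1+\delta)x$ rather than back-filling, and that a walker which has already reached a site $\gg x$ does not, with non-negligible probability, meander back and settle inside $[-(1+\delta)x,(1+\delta)x]$. Here the mass bound $\#\mathfrak C_m\le Cx+1$ is used a second time, to guarantee that the walker meets a vacant site near where it has leapt before it can return; it is the tension between this soft confinement and the Dynkin--Lamperti overshoot tail that forces the somewhat lossy constants ($\tfrac32$, $C-1$, and the $\wedge(C-2)$) in the formula for $C''_\alpha$, and, as the authors note elsewhere, a more careful analysis would improve them. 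A secondary technical point, already flagged above, is that Proposition~\ref{prop:dynkin-lamperti} is a one-sided statement, so the landing-position estimate must be obtained uniformly over the relevant range of starting points by combining it with the two-sided estimates of Section~\ref{sec:kesten}.
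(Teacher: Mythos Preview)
Your intuition is right --- the surplus comes from walkers that overshoot far beyond the occupied block and settle there because of the mass bound --- and you have correctly traced the origin of the constants in $C''_\alpha$. However, there is a genuine gap in the concentration step, and the telescoping scheme is both unnecessary and circular as written.

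The gap is your assertion that the events ``walker $j$ settles at a site beyond $(1+\tfrac32(C-1))x$'' are, for the indefinitely extended walks, i.i.d. They are not. The settling position $S^{(j)}_{\tau_j}$ depends on the cluster $\Cl_{j-1}$, which is determined by all previous walks; unlike the quantities $K^\pm_{m,k}$ of~\eqref{eq:K-plus-def}, there is no way to decouple the settling event from the cluster by replacing $\tau_j$ with the exit time from a deterministic interval. The paper's Lemma~\ref{lem:d-l-estimate} confronts this directly: it lower-bounds the \emph{conditional} probability $\Pr(\Delta_x(m,A)=1\mid\cG_{\sigma_x+m})$ on the event $\{k_x(m,A)<Bx\}$ (this is where the mass bound enters, guaranteeing a nearby vacant site for the far-leaping walker to settle on), and then applies the Azuma--Hoeffding inequality to the resulting submartingale, stopped at the first time the lost-particle count exceeds $\theta x$. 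A binomial Chernoff bound does not apply.

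On structure: the paper does not telescope. It looks at exactly the next $x$ walkers after $\sigma_x$ (not the random window up to $\sigma_{(1+\delta)x}$) and uses Lemma~\ref{lem:d-l-estimate} to show $k_x(x,A)>\theta x$ with overwhelming probability. Then a single pigeonhole step: with $A=(C+1)/2$, at time $\sigma_x+x\le(C+1)x=2Ax$ there are more than $\theta x$ sites outside $[-Ax,Ax]$, hence more than $\theta x$ empty sites inside $[-Ax,Ax]$, hence $\sigma_{Ax}\ge 2Ax+\theta x$. Rescaling gives $\liminf_{x\to\infty}\sigma_x/x\ge 2+2\theta/(C+1)$ directly, with no recursion. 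Your telescoping inequality $\sigma_{(1+\delta)x}-\sigma_x\ge(1+\text{const})\cdot2\delta x$ is also circular as stated: the window length $\sigma_{(1+\delta)x}-\sigma_x$ is the very quantity you are trying to bound below, so you cannot use concentration over ``the window's walkers'' without first knowing that the window contains enough walkers for concentration to bite.
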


A key component in the proof of 
Proposition~\ref{prop:inner-radius-infinite-variance-lower-i} is
Lemma~\ref{lem:d-l-estimate} below.
{Roughly speaking, the result says that between time $\sigma_x$ (at which point $[-x,x]$ is filled) and $\sigma_x +x$, of the $x$ additional random walkers, with very high probability at least $\theta x$ (for some $\theta>0$) particles end up outside $[-Ax , Ax]$ ($A>1$) due to taking a big jump to exit $[-x,x]$.
This is underpinned by  an extension due to Kesten~\cite{kesten1961a} of the Dynkin--Lamperti theorem  (Proposition~\ref{prop:dynkin-lamperti}), 
which shows that when each walk overshoots $[-x,x]$ there is a positive probability it ends up at a distance at least $Ax$. For appropriately large $A>1$ we can be confident that there is an unoccupied site nearby the exit location of these walks that take big jumps, and so, thanks to  Kesten's hitting estimates from Lemma~\ref{lem:kesten-visit-before-exit}, there is a good chance that the cluster adds a positive fraction of $x$ sites outside $[-Ax,Ax]$, and hence too far away to contribute to filling the cluster soon. Following this reasoning, and keeping track of constants, will lead to a proof of Proposition~\ref{prop:inner-radius-infinite-variance-lower-i}.}

Let $\cG_m  := \sigma( S^{(1)}, \ldots, S^{(m)} )$,
 the $\sigma$-algebra generated by the first $m$ random walks, 
and note that $\sigma_x$ defined in~\eqref{eq:sigma-x-def} is a stopping time with respect to the filtration $(\cG_m)_{m \in \ZP}$. 
For $m \in \ZP$, $A>0$ set 
\begin{equation}\label{eq:k_lost_particles} 
k_x(m,A) := \# \bigl( \Cl_{\sigma_x + m} \setminus [-Ax,Ax] \bigr),
\end{equation}
the number of occupied sites outside
$[-Ax,Ax]$ at time $\sigma_x + m$. For $A>1$, we think of $k_x (m,A)$
as ``lost particles'' that are too far away to contribute in the near future to the filling of the main cluster. The following result is a probability estimate that shows that lost particles are readily generated.

\begin{lemma}
    \label{lem:d-l-estimate}
Let $A>1$, $B>0$. Recall the definition of $u_\alpha$ from~\eqref{eq:u-alpha-def}. 
Then for every $u \in (0, u_\alpha(A+B))$ and every $\theta \in (0,B \wedge u)$,
there is an $x_0 \in \N$ such that, for all $x \geq x_0$,
\begin{equation}
    \label{eq:lost-particles-bound} \Pr ( k_x(x,A) \leq \theta x \mid \cG_{\sigma_x} )
\leq \exp ( - (\theta -u)^2 x /2), \text{ on } \{ k_x (0,A) < B x \}.
\end{equation}
\end{lemma}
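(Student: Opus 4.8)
The plan is to condition on $\cG_{\sigma_x}$ and exploit the strong Markov property for the i.i.d.\ sequence of driving walks: conditionally on $\cG_{\sigma_x}$, the walks $W_i := S^{(\sigma_x+i)}$, $i \ge 1$, are i.i.d.\ copies of $S$ started from $0$ and independent of $\cG_{\sigma_x}$. In the window $(\sigma_x,\sigma_x+x]$ exactly $x$ walks are dispatched, each adding one site, so $k_x(x,A) = k_x(0,A) + \sum_{i=1}^{x}\1{D_i} \ge \sum_{i=1}^{x}\1{D_i}$, where $D_i$ is the event that $W_i$ settles at a site of $\Z\setminus[-Ax,Ax]$; hence it suffices to bound $\Pr\bigl(\sum_{i=1}^x \1{D_i} \le \theta x \bigmid \cG_{\sigma_x}\bigr)$ on $\{k_x(0,A)<Bx\}$. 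The crucial point is that on the event $\{k_x(x,A)\le \theta x\}$ the cluster remains \emph{thin outside} $[-Ax,Ax]$ throughout the window: at every time $\sigma_x+i$, $0\le i\le x$, at most $\theta x$ sites of $\Z\setminus[-Ax,Ax]$ are occupied, because $i\mapsto k_x(i,A)$ is nondecreasing. Since $\theta<B$, in this regime fewer than $Bx$ sites outside $[-Ax,Ax]$ are occupied.

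The second, and main, step is a single-walk estimate: I would show there is $u'\in(u,u_\alpha(A+B))$ and an $x_0$ so that, for $x\ge x_0$, whenever a cluster $\cC$ satisfies $[-x,x]\subseteq\cC$ and $\#(\cC\setminus[-Ax,Ax])\le\theta x$, a fresh walk from $0$ settles outside $[-Ax,Ax]$ with probability at least $u'$. Because $[-x,x]\subseteq\cC$, such a walk stays active at least until it exits $[-x,x]$. The extension, due to Kesten~\cite{kesten1961a}, of the Dynkin--Lamperti theorem underlying the definition of $u_\alpha$ at~\eqref{eq:u-alpha-def} gives that the exit site of $[-x,x]$ has absolute value exceeding $(A+B)x$ with probability tending to a limit which, since $u<u_\alpha(A+B)$, is comfortably larger than $u'$; and since $(A+B)x$ exceeds $(A+\theta)x$ while at most $\theta x$ sites beyond $Ax$ are occupied, on that event there is an unoccupied site between $Ax$ and the walk's exit position. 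Combining this with the local hitting estimate of Lemma~\ref{lem:recurrence-hitting} and the hitting-before-exit estimates of Lemma~\ref{lem:kesten-visit-before-exit} then shows the walk reaches an unoccupied site outside $[-Ax,Ax]$ before it can settle inside $[-Ax,Ax]$, so $D_i$ occurs. I expect this step to be the principal obstacle: one must convert ``overshoots $[-x,x]$ far'' into ``settles outside $[-Ax,Ax]$'' uniformly over all thin cluster configurations, and must phrase the good event purely in terms of the individual walk $W_i$ (and $\cG_{\sigma_x}$), so that the indicators remain conditionally independent of the evolving cluster.

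Finally I would conclude by binomial concentration. Processing $W_1,\dots,W_x$ in order and putting $\cF_i := \cG_{\sigma_x}\vee\sigma(W_1,\dots,W_i)$, the single-walk estimate shows that on the $\cF_{i-1}$-measurable event that the cluster is still thin outside $[-Ax,Ax]$ after $i-1$ walks --- an event implied by $\{k_x(x,A)\le\theta x\}$ --- one has $\Pr(D_i\mid\cF_{i-1})\ge u'>u$ for $x\ge x_0$. A standard stochastic-domination argument for sums of adapted indicators with conditionally lower-bounded means (replace $\1{D_i}$ by an independent $\mathrm{Bernoulli}(u)$ whenever thinness has already failed) then yields, on $\{k_x(0,A)<Bx\}$,
\[
\Pr\bigl(k_x(x,A)\le\theta x\bigmid\cG_{\sigma_x}\bigr) \le \Pr\bigl(\Bin(x,u)\le\theta x\bigr).
\]
Since $\theta<u$, Hoeffding's inequality for the lower tail gives $\Pr(\Bin(x,u)\le\theta x)\le \exp(-2(u-\theta)^2 x)\le \exp(-(\theta-u)^2 x/2)$, which is~\eqref{eq:lost-particles-bound}.
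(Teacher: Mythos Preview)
Your strategy is essentially the paper's: a single-walk lower bound $\Pr(D_i\mid\cG_{\sigma_x+i-1})\ge u'>u$ valid whenever fewer than $Bx$ sites lie outside $[-Ax,Ax]$, combined with a stopped submartingale/Hoeffding argument. The paper argues identically, using $M_m:=k_x(m\wedge\lambda_x,A)-u(m\wedge\lambda_x)$ and Azuma--Hoeffding in place of your Bernoulli coupling; these are equivalent.

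One point deserves tightening. You write that the exit site of $[-x,x]$ exceeds $(A+B)x$ in absolute value ``with probability tending to a limit which, since $u<u_\alpha(A+B)$, is comfortably larger than $u'$'', and then invoke Lemmas~\ref{lem:recurrence-hitting} and~\ref{lem:kesten-visit-before-exit} as if the overshoot alone already forces $D_i$. That is not how $u_\alpha$ arises: the overshoot limit is $s_\alpha(A+B-1)$ (Kesten's two-sided Dynkin--Lamperti extension, \eqref{eq:prob_large_overshoot}), and $u_\alpha(A+B)=r_\alpha\,s_\alpha(A+B-1)$ where $r_\alpha=(\alpha-1)2^{\alpha/2-2}$ is precisely the cost of the second step, namely a \emph{uniform} lower bound (via Lemma~\ref{lem:kesten-visit-before-exit} and~\eqref{eq:q-inf-convergence}) on $\bP_y(T_t<\eta_{[y-Bx,y+Bx]})$ over all unoccupied $t\in[y-\tfrac{Bx}{2},y+\tfrac{Bx}{2}]$. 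Lemma~\ref{lem:recurrence-hitting} is not useful here because the unoccupied site can be at distance of order $x$ from $y$; only Lemma~\ref{lem:kesten-visit-before-exit} gives the needed scale-invariant bound. The paper's key containment is that on $\{S_{\eta_{[-x,x]}}=y,\ T_t<\eta_{[y-Bx,y+Bx]}\}$ with $|y|>(A+B)x$, the walker settles somewhere in $[y-Bx,y+Bx]\subset\Z\setminus[-Ax,Ax]$; this is what makes the product $r_\alpha s_\alpha$ a genuine lower bound for $\Pr(D_i\mid\cG_{\sigma_x+i-1})$. With that correction your outline goes through.
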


\begin{proof}
For $A>0$, $x \in \ZP$, and $m \in \ZP$, let $\Delta_x (m,A): = k_x(m+1,A)- k_x(m,A)$ and note that $\Delta_x (m,A) = \1 { |S^{(\sigma_x+m+1)}_{\tau_{\sigma_x+m+1}}| > Ax }$. 
The first step in the proof of \eqref{eq:lost-particles-bound} is to obtain a lower bound on
\begin{align}
\label{eq:k-inc-prob}
\Pr( \Delta_x (m, A) = 1 \mid \cG_{\sigma_x+m} ) 
& =\Pr\bigl( \big|S^{(\sigma_x+m+1)}_{\tau_{\sigma_x+m+1}}\big| > Ax \bigmid \cG_{\sigma_x+m} \bigr).
\end{align} 
Let $B >0$. 
For $y \in \Z$, $x \in \ZP$, and $j > \sigma_x$, define 
\begin{equation}\label{notation-interval-empty}
    I_B(x,y,j): = \Bigl( \bigl[y - \tfrac{Bx}{2}, y + \tfrac{Bx}{2} \bigr]\cap \Z \Bigr) \setminus\Cl_{j},
\end{equation}  
the sites within distance $Bx/2$ of $y$ which are not occupied by the cluster at time~$j$.
Recall, from~\eqref{eq:cluster expansion} and \eqref{eq:eta-j-def}, that $\tau_j = \eta_{\Cl_{j-1}}^{(j)}$ and thus
$\Cl_j$ grows from $\Cl_{j-1}$ by addition of $S^{(j)}_{\tau_{j}}$,
that is,
$S^{(j)}_{\tau_{j}}$ is 
the first site outside of $\Cl_{j-1}$ visited by the $j$th random walk. Recall the definitions~\eqref{eq:T-j-def}--\eqref{eq:eta-j-def}, and define $\teta_B^{(j)}(x,y):=\inf\{n > \eta^{(j)}_{[-x,x]} :  S^{(j)}_n  \notin [y - Bx, y + Bx]\}$. With this notation $\teta_B^{(j)}(x,y)$ denotes the first time  the $j$th random walk lands outside the interval $[y - Bx, y + Bx]$ after it exits the interval $[-x,x]$.
Note that for $j > \sigma_x$, the $j$th random walk is still active when it exits $[-x,x]$. 
If $t \in I_B(x,y,j)$ and $T^{(j)}_t \leq  \teta_B^{(j)}(x,y)$, then 
$S^{(j)}_{\tau_{j}} \in [y - Bx,y + Bx]$ and $|S^{(j)}_{\tau_{j}}| >Ax $ for any $|y|>A+B$.
 Therefore, for any $j > \sigma_x$, we have 
\begin{equation}\label{eq:land-absorb}
\bigcup_{y \in \Z :\, |y| > (A+B) x} \Big\{ 
S^{(j)}_{\eta^{(j)}_{[-x,x]}} = y, \, \min_{t \in I_B(x,y,j)}  T^{(j)}_t < \teta_B^{(j)}(x,y)  \Big\} \subseteq \Big\{\big|S^{(j)}_{\tau_j} \big|> Ax\Big\} .
\end{equation}
Now, for $I \subset \Z$, set 
\begin{align*}
    F_B  (x,y, I ) := \bP_0 \Bigl( S_{\eta_{[-x,x]}} = y, \, \min_{t \in I}  T_t < \teta_B (x,y) \Bigr).
\end{align*}
By the strong Markov property applied at time $\eta_{[-x,x]}$, we have
\begin{align*}
    F_B (x,y, I ) & = \bP_0 \bigl(  S_{\eta_{[-x,x]}} = y \bigr) \bP_y \Bigl(  \min_{t \in I}  T_t < \eta_{[y-Bx,y+Bx]} \Bigr)\\
    & \geq \bP_0 \bigl(  S_{\eta_{[-x,x]}} = y \bigr) \inf_{t \in I} \bP_y \bigl(  T_{t} < \eta_{[y-Bx,y+Bx]} \bigr). 
\end{align*}
Combined with~\eqref{eq:land-absorb}, it follows that, for $j > \sigma_x$, on $\{ I_B(x,y,j) \neq \emptyset \}$,
\begin{align}
\label{eq:two-part-bound}
  &  \Pr\bigl( \big|S^{(j+1)}_{\tau_{j+1}}\big| \geq Ax \bigmid \cG_{j} \bigr) \nonumber\\
& {} \quad{} \geq\sum_{y \in \Z: \, | y| > (A+B)x} F_B(x,y, I_B(x,y,j) ) \nonumber\\
& {} \quad{}  \geq \sum_{y \in \Z: \, | y| > (A+B)x} \bP_0 \bigl(  S_{\eta_{[-x,x]}} = y  \bigr)  \inf_{t \in [y- \frac{Bx}{2}, y+\frac{Bx}{2} ] \cap \Z} \bP_y \bigl( T_t <\eta_{[y-Bx, y+Bx]}\} \bigr) . 
\end{align}
Now, by the symmetry assumption of the increments contained in~\eqref{ass:stable-alpha}, we have 
\begin{equation*}
\inf_{t \in [y- \frac{Bx}{2}, y+\frac{Bx}{2} ]\, \cap \Z} \bP_y \bigl( T_t <\eta_{[y-Bx, y+Bx]}\} \bigr)=\inf_{t \in [y-\frac{Bx}{2}, y]\, \cap \Z} \bP_y \bigl( T_t <\eta_{[y-Bx, y+Bx]}\} \bigr).
\end{equation*}
By translation invariance,
\[
\bP_y \bigl( T_t <\eta_{[y-Bx, y+Bx]} \bigr)= \bP_{y-t} \bigl( T_0 <\eta_{[y-t-Bx, y-t+Bx]}\} \bigr),
\]
 for all $y$ and $t$. 
Therefore, we have 
\begin{equation*}
\inf_{t \in [y-\frac{Bx}{2}, y] \, \cap \Z} \bP_y \bigl( T_t <\eta_{[y-Bx, y+Bx]}\} \bigr)= \inf_{z \in [0, \frac{Bx}{2}] \, \cap \Z} \bP_z \bigl( T_0 <\eta_{[z-Bx, z+Bx]}\} \bigr).
\end{equation*}
Since $\bP_{z} (T_0 < \eta_{[z-Bx,z+ Bx]}) \geq \bP_z  (T_0 < \eta_{[-\frac{Bx}{2},Bx ]})$ for $z \in [0,\frac{Bx}{2}] \cap \Z$, it follows that
\begin{equation*}
\inf_{z \in [0, \frac{Bx}{2}] \, \cap \Z} \bP_z \bigl( T_0 <\eta_{[z-Bx, z+Bx]} \bigr) 
\geq \inf_{z \in [0, \frac{Bx}{2}] \, \cap \Z} \bP_z \bigl( T_0 <\eta_{[-\frac{Bx}{2}, Bx]} \bigr).
\end{equation*}
Using the notation in \eqref{Eq: def_q_N} we may write
\begin{align*}
\inf_{z \in [0, \frac{Bx}{2}] \, \cap \Z} \bP_z \bigl( T_0 <\eta_{[-\frac{Bx}{2}, Bx]} \bigr) 
& = \inf_{\frac{z}{Bx} \in [0, \frac{1}{2}],\,  z \in \Z} 
q_{\alpha, Bx} \bigl( \tfrac{z}{Bx}; 1/2 \bigr)& \ge \inf_{y \in [0, \frac{1}{2}]} 
q_{\alpha, Bx} \bigl( y ; 1/2 \bigr).
\end{align*}
From the equicontinuity of $q_{\alpha,x}$
as in equation~\eqref{eq:q-inf-convergence} and the lower bound
from~\eqref{eq:lower_bound_q} we obtain
\begin{align} 
 \lim_{x \to \infty} \inf_{y \in [0, \frac{1}{2}]} 
q_{\alpha, Bx} \bigl( y ; 1/2\bigr) 
   & = \inf_{0 \leq y \leq 1/2} q_\alpha ( y; 1/2 ) 
    \geq (\alpha - 1) (1/2)^{2 - \frac{\alpha}{2}}  =: r_\alpha.
    \label{eq:good_absorption_probability_away} 
\end{align}

Let $\eps>0$. Then, using the bound~\eqref{eq:good_absorption_probability_away} in equation~\eqref{eq:two-part-bound}, we obtain, for all $x \geq x_0$ large enough, 
 on the event $\bigcap_{y: |y|>(A+B)x}\{ I (x,y,j ) \neq \emptyset \}$, 
\begin{align}
\label{eq:two-part-bound-2}
\Pr \bigl(\big|S^{(j+1)}_{\tau_{j+1}}\big| > A x \bigmid \cG_{j} \bigr)  
& \geq ( r_\alpha - \eps) \sum_{y \in \Z: \, | y| > (A+B)x} \bP_0 \bigl(  S_{\eta_{[-x,x]}} = y  \bigr) \nonumber\\
& = (r_\alpha -\eps) \bP_0 \bigl( \bigl|S_{\eta_{[-x,x]}} \bigr| > (A+B)x \bigr).
\end{align}
Kesten~\cite[p.~270]{kesten1961b}
considers the quantity $G_\alpha ( w ; x, 1):= \bP_0 ( x < S_{\eta_{[-x,x]}} \leq (w+1) x )$.
Under hypothesis~\eqref{ass:stable-alpha}, Kesten 
proved in~\cite{kesten1961a} that $\lim_{x \to \infty} G_\alpha (w; x, 1) = G_\alpha (w,1)$ exists, and gives a formula for $G_\alpha (w,1)$ in Theorem 1 of~\cite[p.~271]{kesten1961b}.
Moreover, the symmetry  assumed in~\eqref{ass:stable-alpha}
shows that $\bP_0 (|S_{\eta_{[-x,x]}}| > (w+1)x) = 2 \bP_0 ( S_{\eta_{[-x,x]}} > (w+1)x)$.
In particular, it follows from Theorem 1 of~\cite{kesten1961b} that, for $w >0$,
\begin{equation}
\label{eq:prob_large_overshoot}
\lim_{x \to \infty}\bP_0 \bigl( \bigl|S_{\eta_{[-x,x]}} \bigr| > (w+1)x \bigr)
=  \frac{2 \sin (\pi \alpha/2)}{\pi}\int_{w}^{\infty} {\frac{\ud v}{v^{\alpha/2} (2+v)^{\alpha/2} (1+v)}}
=: s_\alpha (w).
\end{equation} 
Combining equations~\eqref{eq:two-part-bound-2} and \eqref{eq:prob_large_overshoot},  we obtain for $\delta \in (0,1/2)$ that, for every $\eps >0$, for all $x \geq x_0$ large enough, on $\{ I (x,y,j) \neq \emptyset \}$,
\begin{align}
\label{eq:bin-conditioned-0}
\Pr\bigl(\big|S^{(j+1)}_{\tau_{j+1}}\big| > A x \bigmid \cG_{j} \bigr)  &\geq u_\alpha (A+B) - \eps ,
\end{align}
where $u_\alpha (w) = r_\alpha s_\alpha (w-1)$,
with $r_\alpha$ and $s_\alpha$ given in equation~\eqref{eq:good_absorption_probability_away}
and~\eqref{eq:prob_large_overshoot}, which gives the expression in~\eqref{eq:u-alpha-def}.

Recall from equation~\eqref{eq:k_lost_particles} 
that $k_x(m,A)$ counts the number of particles in $\Cl_{\sigma_x + m} \setminus [-Ax,Ax]$. Now observe that  if $k_x(m,A)  < Bx$, then 
at time $\sigma_x +m$,  
for every $y$ with $|y|>(A+B)x$ we have at least one unoccupied site in $[y-\frac{Bx}{2},y+\frac{Bx}{2}]$, since the length of the interval $[y-\frac{Bx}{2},y+\frac{Bx}{2}]$ is $Bx$, and thus,  with the notation in~\eqref{notation-interval-empty}, we have $I(x,y,\sigma_x+m) \neq \emptyset$.
Hence, from~\eqref{eq:k-inc-prob} with the $j = \sigma_x +m$
case of equation~\eqref{eq:bin-conditioned-0}, 
\begin{align}
\label{eq:bin-conditioned}
    \Pr (\Delta_x (m,A) = 1  \mid \cG_{\sigma_x+m} )  \geq u_\alpha (A+B) - \eps , \text{ on } \{ k_x (m,A) < Bx \}.
\end{align}

Now for $\theta \in (0,B)$, let 
\begin{equation}\label{lambda_x-definition}
    \lambda_x: =  \inf\{m \in \ZP : k_x(m,A) > \theta x\}.
\end{equation}
Note that $\{k_x(x,A) \leq \theta x\} = \{\lambda_x >  x\}$.
For $0 <u < u_\alpha (A+B)$, 
let $M_m: = k_x(m\wedge \lambda_x,A) - u (m\wedge\lambda_x)$. Recall that $k_x(m,A)=k_x (0,A) + \sum_{j=0}^{m-1}\Delta_x (j,A)$. 
Since $\theta < B$, we can (and do) assume that $x$ is large enough so that $\theta x +1 < Bx$. 
Then observe that, for all $x$ sufficiently large, it follows from equation~\eqref{eq:bin-conditioned} that
\begin{eqnarray*}
\Exp \left(\Delta_x (m\wedge \lambda_x ,A) \mid \cG_{\sigma_x+m}\right)\ge u, \as,
\end{eqnarray*}
since for any $m < \lambda_x, \, k_x(m \wedge \lambda_x,A) \le \theta x +1 < Bx$ by the definition of $\lambda_x$.
Thus, for all $x$ sufficiently large, from the Doob  decomposition of $k_x(m,A)$ (see e.g.~Theorem 5.2.10 of~\cite{durrett}), it follows that 
$(M_m)_{m \in \ZP}$ is a submartingale adapted to $(\cG_{\sigma_x+m})_{m \in \ZP}$. 

From~\eqref{eq:bin-conditioned} and the fact that
$\Delta_x (m,A) \in \{0,1\}$
it follows that for $0 < \theta< u \wedge B$ 
\begin{align}
\label{eq:lambda_x-wait-x}
\Pr (\lambda_x > x \mid \cG_{\sigma_x}) &= \Pr (k_x(x,A) < \theta x, \, \lambda_x>x \mid \cG_{\sigma_x})
\nonumber\\
&= \Pr (k_x(x,A)-u x<(\theta -u)x, \, \lambda_x>x \mid \cG_{\sigma_x}) \nonumber\\
& = \Pr ( M_x < (\theta -u)x, \, \lambda_x>x \mid \cG_{\sigma_x}) \nonumber\\
        &\leq\Pr (M_x<(\theta -u)x \mid \cG_{\sigma_x}) \leq \exp(-(\theta -u)^2 x/2), 
    \end{align}
 where we use the Azuma--Hoeffding inequality for submartingales, 
Theorem~2.4.14
of~\cite{bluebook}.
This yields~\eqref{eq:lost-particles-bound}. \end{proof}

{Next we prove Proposition~\ref{prop:inner-radius-infinite-variance-lower-i},
making precise the idea outlined before the statement of Lemma~\ref{lem:d-l-estimate} about ``lost particles'' being generated in good proportion.}

\begin{proof}[Proof of Proposition~\ref{prop:inner-radius-infinite-variance-lower-i}]
First observe that from~\eqref{eq:sigma_limsup_infinite_variance_upper_bound} established in Proposition~\ref{prop:upper-bound_infinite-variance} above, for every $C > C_\alpha'>2$ as given in equation~\eqref{eq:C-alpha-dash}, a.s., for all but finitely many $x \in \N$,  it holds that 
 $\sigma_x \leq C x$.
 Thus, at time $Cx$, there are $2x$ of the walks that have occupied sites in $[-x,x]$,
 and at most $C-2x$ of the walks that are outside $[-x,x]$. 
It follows from definition of $k_x$ in equation~\eqref{eq:k_lost_particles} that, for every $A \geq 1$, a.s., for all but finitely many $x\in\N$,
$k_x (0, A) \leq (C-2) x$. In particular, we can choose $A>1$ and $B = C -2 >0$, to conclude that
$
k_x (0, A) \leq B x
$
for all but finitely many $x \in \N$. Then from~\eqref{eq:lost-particles-bound} we have that
for every $u \in (0,u_\alpha (A+B))$ and every $\theta \in (0, B \wedge u)$, $\sum_{x \in \N} \Pr ( k_x (x, A) \leq \theta x \mid \cG_{\sigma_x} ) < \infty$. Hence, by L\'evy's conditional Borel--Cantelli lemma~\cite[p.~131]{kall}, we conclude 
$k_x (x, A) > \theta x$ for all but finitely many $x \in \N$, a.s.

Now choose $A = (C+1)/2$.
If $\sigma_y \leq C y$
and $k_y(y,A) >  \theta y$,
then $\sigma_y + y \leq (C+1) y = 2 Ay$,
meaning that at time $2A y$ there are more than $\theta y$ sites outside $[-Ay, Ay]$. Therefore at time $2Ay$
there are  at least  $\theta y$ empty sites in $[-Ay,Ay]$. Those empty sites  must be filled before time $\sigma_{Ay}$ and consequently, $\sigma_{Ay} \geq 2Ay + \theta y$.
Taking $y = \lfloor x / A \rfloor$,  
we conclude that, a.s., 
\begin{equation}\label{theta-C-bound}
\liminf_{x \to \infty} \frac{\sigma_{x}}{x} \geq 2 + \frac{2 \theta}{C+1} ,
\end{equation}
where $C  > C_\alpha'$. The inequality \eqref{theta-C-bound} holds true for any positive $\theta  \leq B \wedge u_\alpha(A+B)$, with  $B = C-2$ and $A >1$. Therefore, we can choose $A$ such that  $A + B< (3/2)(C- 1)$ to conclude from $u_\alpha(A+ B) \geq u_{\alpha}((3/2)(C-1))$ that 
\[
\liminf_{x \to \infty} \frac{\sigma_{x}}{x} \geq 2 + \frac{(C-2) \wedge u_\alpha ( (3/2) (C-1) )}{C+1}, \as
\]
Since $C > C_\alpha'$ was arbitrary, this completes the proof.
\end{proof}

{Finally, we conclude this section with the proof of Theorem~\ref{thm:idla-heavy-tail},
which follows from the bounds in Propositions~\ref{prop:upper-bound_infinite-variance} 
and~\ref{prop:inner-radius-infinite-variance-lower-i} and the inversion relation~\eqref{eq:inversion}.}

\begin{proof}[Proof of Theorem~\ref{thm:idla-heavy-tail}]
First, with $C_\alpha' \in (2,\infty)$ as given in equation~\eqref{eq:C-alpha-dash}, we have from~\eqref{eq:sigma_limsup_infinite_variance_upper_bound} in Proposition~\ref{prop:upper-bound_infinite-variance} 
that $\limsup_{x \to \infty} \sigma_x/x \leq C'_\alpha$, a.s.
Together with the inversion relation~\eqref{eq:sigma-x-def}--\eqref{eq:inversion}, it follows immediately that
$\liminf_{m \to \infty} r_m /m \geq 1/C'_\alpha$, a.s., which yields the lower bound in~\eqref{eq:r-m-heavy}
with $c_\alpha := 1/C'_\alpha \in (0, 1/2)$.
The formula~\eqref{eq:c-alpha} for $c_\alpha$ follows directly from formula~\eqref{eq:C-alpha-dash} for $C'_\alpha$.

In the other direction, we have from~\eqref{eq:sigma_liminf_infinite_variance-prop_lower}  in  Proposition~\ref{prop:inner-radius-infinite-variance-lower-i} 
that
$\liminf_{x \to \infty} \sigma_x/x \geq C_\alpha''$, a.s., 
where $C_\alpha' \geq C_\alpha'' > 2$ is as defined in Proposition~\ref{prop:inner-radius-infinite-variance-lower-i}.
Again, inversion then yields $\limsup_{m \to \infty} r_m/m \leq c'_\alpha$, a.s., where $c'_\alpha := 1 / C''_\alpha$
satisfies $0 < c_\alpha \leq c'_\alpha < 1/2$, completing the proof of~\eqref{eq:r-m-heavy} and hence of Theorem~\ref{thm:idla-heavy-tail}.
\end{proof}

%%%%%%%%%%%%%%%%%%%%%%%%%%%%%%%%%%%%%%%%%%%%%%
%% Single Appendix:                         %%
%%%%%%%%%%%%%%%%%%%%%%%%%%%%%%%%%%%%%%%%%%%%%%
%\begin{appendix}
%\section*{???}%% if no title is needed, leave empty \section*{}.
%\end{appendix}
%%%%%%%%%%%%%%%%%%%%%%%%%%%%%%%%%%%%%%%%%%%%%%
%% Multiple Appendixes:                     %%
%%%%%%%%%%%%%%%%%%%%%%%%%%%%%%%%%%%%%%%%%%%%%%

\begin{appendix}

\section{Eventual filling}
\label{sec:appendix}

This short appendix presents the proof of Proposition~\ref{prop:fill-finite-set}. As in \S\ref{sec:inner-radius-infinite-variance-lower-bound}, we write $\cG_m = \sigma( S^{(1)}, \ldots, S^{(m)})$.

\begin{proof}[Proof of Proposition~\ref{prop:fill-finite-set}]
It suffices to prove that for every $A \subseteq \Z$ finite, we have $A \subseteq \Cl_\infty$, a.s. Fix such an $A$.
Let $z \in A$. Then, by the irreducibility property~\eqref{hyp:irreducible}, there exist
$p_z >0$, 
$n_z \in \ZP$, and $x_{z,1}, \ldots, x_{z,n_z-1} \in \Z \setminus \{0,z\}$ such that
\[ \Pr (S_1 =x_{z,1}, \ldots, S_{n_z-1} = x_{z,n_z-1}, S_n = z) =p_z > 0.\]
With probability $p_z^{n_z}$, a sequence of $n_z$ successive random walkers will follow the path $x_{z,1}, \ldots, x_{z,n_z-1}, z$ for their first~$n_z$ steps; on this event,
at least one of the walkers will reach~$z$ before terminating, and so $z \in \Cl_\infty$.
Set $p_A:= \prod_{z \in A} p_z^{n_z} > 0$ and  $n_A := \sum_{z \in A} n_z < \infty$.
Given $\Cl_m$, iterating
the above argument shows that (regardless of the existing configuration),
with probability $p_A$, the sequence of random walkers $m+1, m+2, \ldots, m+ n_A$
executes an event such that $A \subseteq \Cl_{m+n_A}$; that is,
\[ \Pr ( A \subseteq \Cl_{\infty} \mid \cG_m ) \geq \Pr ( A \subseteq \Cl_{m +n_A} \mid \cG_m ) \geq p_A, \as \]
Denoting $\cG_\infty := \sigma ( \cup_{m \in \ZP} \cG_n )$, it follows from L\'evy's zero--one law
(see e.g.~Theorem~5.5.8 of~\cite{durrett})
that
\[ \Pr ( A \subseteq \Cl_\infty \mid \cG_\infty ) = \lim_{m \to \infty} \Pr (A \subseteq \Cl_\infty \mid \cG_m ) \geq p_A, \as \]
But since $\{ A \subseteq \Cl_\infty \} \in \cG_\infty$, this means that
$\1 {A \subseteq \Cl_\infty } \geq p_A$, a.s., for deterministic $p_A >0$, from which
it must hold that $\Pr ( A \subseteq \Cl_\infty ) = 1$.
\end{proof}

\section{Equicontinuity}
\label{sec:appendix-equicontinuity}

 The aim of this section is to prove Lemma~\ref{lem:kesten-equicontinuity}. The first step is
the following lemma, which is essentially provided already by Kesten.
Recall that the local hitting property, Lemma~\ref{lem:recurrence-hitting}, expresses the fact that a recurrent random walk will very likely hit a point at finite distance from its starting point before going far away; Lemma~\ref{lemma:hitting_immediate_neighbours_whp} extends this statement from points at a finite distance to points at a distance allowed to grow slowly with the size of the interval.
In this section of the appendix, we use the same notation as \S\ref{sec:kesten}; we recall in particular the notation  $T_t$ and $\eta_A$ from~\eqref{eq:T-eta-S}.

\begin{lemma}[Kesten 1961~\cite{kesten1961b}]
\label{lemma:hitting_immediate_neighbours_whp}
 Suppose that~\eqref{ass:irreducible} holds. For $1 < \alpha \leq 2$, suppose in addition that (if $\alpha = 2$) $\Exp ( X^2 ) < \infty$ and $\Exp X =0$, or (if $1<\alpha<2$) that~\eqref{ass:stable-alpha} holds. 
Then the following hold.
\begin{thmenumi}[label=(\roman*)]
\item\label{lemma:hitting_immediate_neighbours_whp-i} For any $\eps>0$, there exist $\delta >0$ and $N_0 \in \N$, such that, for all $N \geq N_0$ and every $k \in \Z$ with $| k | \le \delta  N$,
\begin{equation}
\label{Eq:Kesten_hit_immediate_points}
\bP_k \left( T_0 <  \eta_{(-\infty, N]} \right) \ge 1-\eps.
\end{equation}
\item\label{lemma:hitting_immediate_neighbours_whp-ii} For any $\eps >0$ and $c>0$, 
there exist $\delta >0$ and $N_0 \in \N$, such that, for all $N \geq N_0$ and every $k \in \Z$ with $|k| \le \delta N$,
\begin{equation}
\label{Eq:Kesten_hit_neighbors_with_2_sided_boundary}
\bP_k \left( T_0 < \eta_{[-cN, N]} \right) \ge 1-\eps.
\end{equation}
\end{thmenumi}
\end{lemma}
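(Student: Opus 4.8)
The plan is to obtain both parts as fairly direct consequences of the convergence and equicontinuity of the hitting-before-exit probabilities $q_{\alpha,N}$ from Lemmas~\ref{lem:kesten-visit-before-exit} and~\ref{lem:kesten-equicontinuity}, combined with the boundary behaviour $\lim_{y\downarrow 0}q_\alpha(y;c)=1$ (this is~\eqref{eq:q-y-at-0} of Lemma~\ref{lem:Bounds_on_q} when $\alpha\in(1,2)$, and is immediate from $q_2(y;c)=1-y$ when $\alpha=2$). All of the overshoot behaviour---tightness when $\Exp(X^2)<\infty$, Dynkin--Lamperti scaling when $\Exp(X^2)=\infty$---is already encoded in Kesten's formula for $q_\alpha$, so no separate overshoot estimate is needed. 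Also, part~\ref{lemma:hitting_immediate_neighbours_whp-ii} immediately yields part~\ref{lemma:hitting_immediate_neighbours_whp-i}: since $[-cN,N]\subseteq(-\infty,N]$ forces $\eta_{[-cN,N]}\le\eta_{(-\infty,N]}$, we get $\{T_0<\eta_{[-cN,N]}\}\subseteq\{T_0<\eta_{(-\infty,N]}\}$, so it suffices to apply part~\ref{lemma:hitting_immediate_neighbours_whp-ii} with $c=1$.

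So I would focus on part~\ref{lemma:hitting_immediate_neighbours_whp-ii}. Fix $\eps>0$ and $c>0$, set $c':=\min(c,1)\in(0,1]$ and $M:=\lfloor c'N\rfloor$. Since $[-M,M]\subseteq[-c'N,c'N]\subseteq[-cN,N]$, monotonicity of exit times gives $\bP_k(T_0<\eta_{[-cN,N]})\ge\bP_k(T_0<\eta_{[-M,M]})$ for every $k\in\Z$. The interval $[-M,M]$ is symmetric about $0$ and the event $\{T_0<\eta_{[-M,M]}\}$ is invariant under the reflection $S\mapsto-S$; since the increment law $-X$ satisfies the same standing hypotheses as $X$ (irreducibility transfers, and $-X\eqd X$ under~\eqref{ass:stable-alpha}), it suffices to bound $\bP_k(T_0<\eta_{[-M,M]})$ from below, uniformly over $0\le k\le\delta N$, for a suitable $\delta=\delta(\eps,c)>0$. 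For such $k$ and $N$ large, $k\le M$ and, by definition of $q_{\alpha,M}$, one has $\bP_k(T_0<\eta_{[-M,M]})=q_{\alpha,M}(k/M;1)$.

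It then remains to choose $\delta$ and $N_0$. By Lemma~\ref{lem:kesten-visit-before-exit}, $q_{\alpha,M}(\cdot;1)\to q_\alpha(\cdot;1)$ pointwise on $[0,1)$, and by the uniform equicontinuity in Lemma~\ref{lem:kesten-equicontinuity} this convergence is uniform on every compact subinterval of $[0,1)$, as already observed around~\eqref{eq:q-inf-convergence}. Since $q_\alpha(y;1)\to1$ as $y\downarrow 0$, I would first pick $\delta_1\in(0,1/2)$ with $q_\alpha(y;1)\ge1-\eps/2$ on $[0,\delta_1]$, then $M_0$ with $\sup_{y\in[0,\delta_1]}|q_{\alpha,M}(y;1)-q_\alpha(y;1)|\le\eps/2$ for all $M\ge M_0$, and finally set $\delta:=c'\delta_1/2$ and take $N_0$ large enough that, for $N\ge N_0$, one has $M\ge M_0$ together with $k\le M$ and $k/M\le\delta_1$ whenever $0\le k\le\delta N$. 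Then $\bP_k(T_0<\eta_{[-M,M]})=q_{\alpha,M}(k/M;1)\ge1-\eps$, which gives~\eqref{Eq:Kesten_hit_neighbors_with_2_sided_boundary}.

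The step needing the most care is the reduction to a symmetric interval and to $k\ge0$: one must check that passing to the increment law $-X$ preserves~\eqref{ass:irreducible} and the pertinent finite-variance or stable hypothesis, so that Lemmas~\ref{lem:kesten-visit-before-exit}--\ref{lem:Bounds_on_q} remain applicable to the reflected walk, and one must track the rescaling by $M=\lfloor c'N\rfloor$, which is exactly what forces $\delta$ to be chosen small relative to $c$. Beyond this bookkeeping there is no genuine obstacle; the substantive content is entirely contained in the cited results of Kesten.
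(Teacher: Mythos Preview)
Your argument is logically circular within the paper's development. Lemma~\ref{lemma:hitting_immediate_neighbours_whp} is precisely the tool used to establish the equicontinuity in Lemma~\ref{lem:kesten-equicontinuity}: Appendix~\ref{sec:appendix-equicontinuity} explicitly presents Lemma~\ref{lemma:hitting_immediate_neighbours_whp} as ``the first step'' toward proving Lemma~\ref{lem:kesten-equicontinuity}, and the proof of the latter invokes~\eqref{Eq:Kesten_hit_neighbors_with_2_sided_boundary} directly. The same dependency holds in Kesten's original papers: his Lemma~4 (the equicontinuity of $q$, cited here as Lemma~\ref{lem:kesten-equicontinuity}\ref{lem:kesten-equicontinuity-ii}) is deduced from his Lemma~3 (which is part~\ref{lemma:hitting_immediate_neighbours_whp-i} here), and his Theorem~2 (the convergence in Lemma~\ref{lem:kesten-visit-before-exit}\ref{lem:kesten-visit-before-exit-ii}) rests on both. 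Consequently, invoking Lemmas~\ref{lem:kesten-visit-before-exit} and~\ref{lem:kesten-equicontinuity} to prove Lemma~\ref{lemma:hitting_immediate_neighbours_whp} assumes the conclusion, whether one appeals to the paper's proofs or to Kesten's. The mechanical steps you describe are fine, but they do not constitute an independent proof.

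The paper instead treats part~\ref{lemma:hitting_immediate_neighbours_whp-i} as foundational: it is Kesten's Lemma~3, whose proof is Fourier-analytic and uses only the behaviour of the characteristic function near zero (the paper notes that for $\alpha=2$ without symmetry the relevant input is $\lim_{t\to 0} t^{-2}(1-\Exp[\re^{itX}]) = \Exp(X^2)$, after which Kesten's argument goes through verbatim). Part~\ref{lemma:hitting_immediate_neighbours_whp-ii} is then deduced from part~\ref{lemma:hitting_immediate_neighbours_whp-i} by a monotone-limit argument: $\{T_0<\eta_{(-\infty,N]}\} = \bigcup_{m} \{T_0<\eta_{[-cm,N]}\}$ with the union increasing, so continuity from below yields the two-sided bound from the one-sided one. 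Note that the paper's implication runs from~\ref{lemma:hitting_immediate_neighbours_whp-i} to~\ref{lemma:hitting_immediate_neighbours_whp-ii}, opposite to your reduction, precisely because~\ref{lemma:hitting_immediate_neighbours_whp-i} is the statement with a direct, self-contained proof that does not presuppose the later lemmas.
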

\begin{proof}
Part~\ref{lemma:hitting_immediate_neighbours_whp-i} is already available as Lemma~3 in~\cite{kesten1961b} for $1 < \alpha <2$. For $\alpha=2$, it is available as Lemma~3 in~\cite{kesten1961b} under the additional assumption of symmetric increment distribution, i.e.,  $X \eqd -X$. If we remove this assumption, but instead assume that $\Exp X=0$ and $\Exp(X^2)< \infty$, then it is easy to show that from Taylor expansion and dominated convergence that
\begin{equation*}
\label{Eq:cong_of_cht_fn}
\lim_{t \to 0} \frac{1}{t^2} \left(1 - \Exp \left[\re^{itX}\right]\right) = \Exp(X^2) ;  
\end{equation*}
observe also that $\Exp X =0$ and~\eqref{ass:irreducible} together imply that $\Exp (X^2) > 0$.
This observation is the key ingredient in the proof of Lemma 3 in~\cite{kesten1961b} (see equation (2.3) which leads to (2.31)  in~\cite{kesten1961b}), and hence an exact verbatim proof as in Lemma 3 in~\cite{kesten1961b} yields~\eqref{Eq:Kesten_hit_immediate_points}.

Part~\ref{lemma:hitting_immediate_neighbours_whp-ii} can be deduced from part~\ref{lemma:hitting_immediate_neighbours_whp-i}, as is also indicated in the proof of Lemma 4 in~\cite{kesten1961b}. Indeed,  observe that for any $c>0$, we have
\[ \{ T_0 < \eta_{(-\infty, N]} \} = \bigcup_{m \in \N} \{ T_0 < \eta_{[-cm, N]} \} , \]
where, for $m \in \N$,   the monotonicity property
$ \{ T_0 < \eta_{[-cm,N]} \} \subseteq \{ T_0 < \eta_{[-c(m+1),N]}\}$ is satisfied. Hence, by continuity of monotone limits, 
\[ \lim_{m \to \infty}\bP_k \left( T_0 < \eta_{[-cm, N]}  \right) 
=\bP_k \left( T_0 <  \eta_{(-\infty, N]} \right).
\]
This, together with \eqref{Eq:Kesten_hit_immediate_points} completes the proof of~\eqref{Eq:Kesten_hit_neighbors_with_2_sided_boundary}.
\end{proof}

We are now ready to present the proof of Lemma~\ref{lem:kesten-equicontinuity}.
\begin{proof}[Proof of Lemma~\ref{lem:kesten-equicontinuity}]
The proof for~\ref{lem:kesten-equicontinuity-ii} is already available in Lemma 4 in \cite{kesten1961b}, so we only prove part~\ref{lem:kesten-equicontinuity-i} here, which is also very similar. 

Suppose that $S_0 =k$, and observe that, for  $0<k <N$, 
\begin{equation}
\{ T_{-1} < \eta_{(-\infty, N]} \} \subseteq 
\{ S_{\eta_{[0,N]}} < 0 \}.
\end{equation}
This shows that for any $| y | < \delta(\eps)$, 
\begin{equation}
p_{\alpha, N} ( y) \ge 1-\eps.
\end{equation}
Therefore, for any $0 \le y_1, y_2 \leq \delta(\eps)$, since $0 \le p_{\alpha, N} ( y) \le 1$
\begin{equation}
p_{\alpha, N} ( y_1) \ge \left(1-\eps\right)p_{\alpha, N} ( y_2),
\end{equation}
which proves equicontinuity for $| y | < \delta(\eps)$. Hence, it is enough to show equicontinuity for any $y > \delta(\eps)$, the proof of which is similar to that of part~\ref{lemma:hitting_immediate_neighbours_whp-ii}. We provide the details for the purpose of completeness.

Let $k_1, k_2 \ge \delta(\eps) N$, 
it follows from the Markov property and translation invariance, that 
\begin{equation}
\label{Eq:Prob_k_1_greater_k_2}
p_{\alpha, N} \Bigl( \frac{k_1}{N} \Bigr) \ge \bP_{k_1} \left( S_n = k_2, \text{ for some } 1 \le n < \eta_{[\frac{\delta(\eps)}{2} N, N]}\right)p_{\alpha, N} \Bigl( \frac{k_2}{N} \Bigr).
\end{equation}
It follows from translation invariance and \eqref{Eq:Kesten_hit_neighbors_with_2_sided_boundary}, that for any given $\eps>0$, there exits $\delta_1(\eps)$, such that
\begin{equation}
\bP_{k_1} \left( S_n = k_2, \text{ for some } 1 \le n < \eta_{[\frac{\delta(\eps)}{2} N, N]}\right) \ge 1-\eps,
\end{equation}
 whenever $k_1, k_2 \le N(1-\eps) $, and $| k_1 -k_2 | \leq N \delta_1(\eps)$. Thus we can choose $\delta_2(\eps)>0$, such that, for $0 \le y_1, y_2< 1-\eps$ and $| y_1 -y_2 | <\delta_2(\eps )$, such that,
 \begin{equation}
 \label{Eq:tilde_py_1_more}
 p_{\alpha, N}(y_1) \ge (1-\eps) p_{\alpha, N}(y_2).
 \end{equation}
 Since, $y_1$ and $y_2$ are arbitrary and $0 \le p_{\alpha, N}(y) \le 1$, this proves~\ref{lem:kesten-equicontinuity-i}.
\end{proof}

\end{appendix}

\section*{Acknowledgements}
\addcontentsline{toc}{section}{Acknowledgements}
CdC and AW were supported by EPSRC grant EP/W00657X/1. 
The majority of this work was
done when the first author was employed by Durham University. The 
authors are grateful to 
two anonymous referees for their valuable comments, corrections, and suggestions. We
thank 
Wioletta Ruszel for suggesting the model to us, and M.\ Bal\'asz,  E.\ Crane, and J.\ Jay for valuable discussions, and 
for making available the draft version of~\cite{Balasz_etal2024}. We particularly thank E.\ Crane for bringing the important reference~\cite{blachere} to our attention.
 Some of this work was carried
 out during the programme ``Stochastic systems for anomalous diffusion'' (July--December 2024) hosted by the  Isaac Newton Institute, under EPSRC grant EP/Z000580/1.

\end{document}